\newtheorem{thm}{Theorem}[section]
\newtheorem*{thmz}{Theorem}             
\newtheorem{prop}[thm]{Proposition}
\newtheorem{lem}[thm]{Lemma}
\theoremstyle{definition}
\newtheorem{defn}[thm]{Definition}
\newtheorem{ex}[thm]{Example}
\theoremstyle{remark}
\newtheorem{rem}[thm]{Remark}
\newcommand{\RR}{\mathbb R}                                         
\newcommand{\tto}{\rightrightarrows}                        
\newcommand{\F}{\ensuremath{\mathcal{F}}}
\newcommand{\N}{\ensuremath{\mathcal{N}}}
\DeclareMathOperator{\Lie}{\mathcal{L}}         
\renewcommand{\d}{\mathrm d}                    
\newcommand{\X}{\ensuremath{\mathfrak{X}}}          
\newcommand{\D}{\mathcal D}                      
\DeclareMathOperator{\Ver}{Vert}        
\newcommand{\G}{\mathcal{G}}            
\newcommand{\M}{\mathcal{M}}            
\newcommand{\C}{\mathcal{C}}            
\renewcommand{\O}{\mathcal{O}}
\newcommand{\s}{\mathbf{s}}             
\renewcommand{\t}{\mathbf{t}}           
\renewcommand{\F}{\mathcal{F}}          
\newcommand{\K}{\mathcal K}
\newcommand{\g}{\mathfrak{g}}           
\DeclareMathOperator{\im}{Im}           
\DeclareMathOperator{\Curv}{Curv}
\DeclareMathOperator{\ad}{ad}           
\DeclareMathOperator{\Aut}{Aut}         
\DeclareMathOperator{\Der}{Der}         
\DeclareMathOperator{\Z}{Z}
\DeclareMathOperator{\Gau}{Gauge}
\newcommand{\ri}{\overrightarrow}
\newcommand{\comment}[1]{}
\begin{document}
 \title{Extensions of Lie Brackets.}
\author{Olivier Brahic}

\address{Depart.~de Matem\'{a}tica,
Instituto Superior T\'{e}cnico, 1049-001 Lisboa, PORTUGAL}
\email{brahic@math.ist.utl.pt}
\thanks{Partially supported by the Funda\c{c}\~ao para a Ci\^encia e a Tecnologia
through the Program POCI 2010/FEDER and by the Projects POCI/MAT/57888/2004 and POCI/MAT/55958/2004.}

\begin{abstract}
We provide a framework for extensions of Lie algebroids, including non-abelian extensions and
Lie algebroids over different bases. Our approach involves Ehresmann
connections, which allows straight generalizations of classical
constructions. We exhibit a filtration in cohomology and explain the associated spectral sequence.
 We also give a description of the groupoid integrating an extension in case a complete connection exists.
 The integrability is also studied.
\end{abstract}

\maketitle

\section*{Introduction}            %
\label{sec:introduction}           %
The concept of Lie algebroid proved to be relevant in diverse fields of geometry, with interesting applications in physics as well.
 However, despite intensive studies, the notion of extension was never systematically investigated.
 It is the aim of this paper to fill this gap. We study extensions of Lie algebroids from a general perspective,
 the interest is to get a unified treatment for a large variety of situations.

The basic setting will be a surjective Lie algebroid morphism $\pi:A_E\to A_B$ covering a surjective submersion
$p:E\to B$ where $A_E$ and $A_B$ are Lie algebroids over $E$ and $B$, respectively. In this
situation, we denote $\K$ the kernel of $\pi$. It is a Lie algebroid over $E$ and we obtain this
way $A_E$ as an extension of $A_B$ by $\K$:
$$ \K\hookrightarrow A_E \twoheadrightarrow A_B.$$

In Section \ref{generalities}, we recall basic notion concerning Lie algebroids and their extensions. Then we introduce in Section \ref{Ehresmann} the notion of Ehresmann connection for an extension: it
is given by a sub-vector bundle $H$ complementary to $\K$ in $A_E$:
$$ A_E=\K\oplus H.$$
 Such a connection always exists and we can construct the usual horizontal lifting, it is a $C^\infty(B)$-linear application $h:\Gamma(A_B)\to \Gamma(H)$. From these notions, we derive a couple $(\D,\omega)$ where:
\begin{itemize}
\item $\D:\Gamma(A_B)\to \Der(\K)$ is a $C^\infty(B)$-linear map with values in derivations of $\K$, such that the symbol $s_{\D_\alpha}$ of $\D_{\alpha}$ $p_*$-projects onto $\sharp_B(\alpha)$ for any $\alpha\in\Gamma(A_B)$;
\item $\omega$ is a $2$-form on $A_B$ with values in sections of $\K$: $\omega\in\Omega^2(A_B)\otimes\Gamma(\K)$.
\end{itemize}
In order to define an extension, the couple $(\D,\omega)$ is submitted to the following
compatibility conditions:
\begin{align*}
\partial_H\omega&=0,\\
 \Curv_\D&=\ad^\K\circ\,\omega.
\end{align*}
  Here, $\partial_H:\Omega^k(A_B)\otimes\Gamma(\Lambda^l \K)\to\Omega^{k+1}(A_B)\otimes\Gamma(\Lambda^l
\K)$ is a covariant derivative associated to $\D$ by usual formulas and
$\Curv_\D(\alpha,\beta):=[\D_\alpha,\D_\beta]-\D_{[\alpha,\beta]}$.

The description of extensions we get this way lies in-betwen usual treatments for both Lie algebras and fibrations:
\begin{thmz}
Let $p:E\to B$ be a submersion, $\K\to E$ a Lie algebroid  whose characteristic foliation lies in $\Ver$ and  $A_B\to B$ a Lie algebroid.

Extensions of $A_B$ by $\K$ covering $p$ are (up to equivalence) classified by equivalence classes of admissible couples $(\D,\omega)$ under the relation $(\D,\omega)\sim(\D',\omega')$ if there exists $\Delta\in\Omega^1(A_B)\otimes \Gamma(\K)$ such that
$$\begin{array}{ccl}
\D'&=&\D+\ad^\K\circ{\Delta},\\
 \omega'&=&\omega+\partial_H\Delta +[\Delta\wedge\Delta]_\K,
\end{array}$$
where $[\Delta\wedge\Delta]_\K\in\Omega^2(A_B)\otimes\Gamma(\K)$ is defined by: $$[\Delta\wedge\Delta]_\K(\alpha,\beta)=[\Delta(\alpha),\Delta(\beta)]_\K.$$
\end{thmz}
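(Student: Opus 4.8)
The plan is to set up a bijection in two stages: from extensions equipped with an Ehresmann connection to admissible couples, and back, and then to verify that changing the connection and passing to equivalent extensions is accounted for exactly by the stated equivalence relation. First I would analyze how $(\D,\omega)$ depends on the choice of connection $H$. If $H$ and $H'$ are two complements of $\K$ in $A_E$, their horizontal lifts differ by a $\K$-valued $1$-form, $h'(\alpha)=h(\alpha)+\Delta(\alpha)$ with $\Delta\in\Omega^1(A_B)\otimes\Gamma(\K)$, since $\pi\bigl(h'(\alpha)-h(\alpha)\bigr)=0$. As $\D_\alpha$ is the restriction to $\K$ of the adjoint action of $h(\alpha)$, replacing $h$ by $h'$ gives at once $\D'_\alpha=\D_\alpha+[\Delta(\alpha),\,\cdot\,]_\K=\D_\alpha+\ad^\K_{\Delta(\alpha)}$, which is the first relation. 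For $\omega$ I would expand the $\K$-component of $[h'(\alpha),h'(\beta)]-h'([\alpha,\beta])$: the purely horizontal term reproduces $\omega(\alpha,\beta)$, the cross terms contribute $\D_\alpha\Delta(\beta)-\D_\beta\Delta(\alpha)$, and together with $-\Delta([\alpha,\beta])$ these assemble into $(\partial_H\Delta)(\alpha,\beta)$, while the last bracket is $[\Delta(\alpha),\Delta(\beta)]_\K=[\Delta\wedge\Delta]_\K(\alpha,\beta)$. This yields precisely $\omega'=\omega+\partial_H\Delta+[\Delta\wedge\Delta]_\K$, so a change of connection moves $(\D,\omega)$ within a single equivalence class and the extension-to-couple assignment descends to equivalence classes.

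Next I would treat the reconstruction, the surjectivity of the correspondence. Given an admissible couple $(\D,\omega)$ I build a Lie algebroid structure on the vector bundle $A_E:=\K\oplus p^*A_B$, using the second summand as a tautological horizontal distribution. The anchor is $\sharp(X\oplus\alpha):=\sharp_\K(X)+s_{\D_\alpha}$, which takes values in $TE$ precisely because the characteristic foliation of $\K$ lies in $\Ver$ and the symbol $s_{\D_\alpha}$ $p$-projects onto $\sharp_B(\alpha)$; this also guarantees that $\pi$ covers $p$. The bracket is dictated by the classical formula
\[
[X\oplus\alpha,\;Y\oplus\beta]:=\bigl([X,Y]_\K+\D_\alpha Y-\D_\beta X+\omega(\alpha,\beta)\bigr)\oplus[\alpha,\beta].
\]
I would then check $\RR$-bilinearity, antisymmetry, and the Leibniz rule against the anchor; these are formal and use only that each $\D_\alpha$ is a derivation with the stated symbol.

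The main obstacle is the Jacobi identity for this bracket, and this is exactly where the two compatibility conditions are consumed. I would expand the Jacobiator on three sections $X_i\oplus\alpha_i$ and sort it by the number of horizontal inputs. The three-horizontal component is the $\K$-valued $3$-form $\partial_H\omega$, which must vanish, giving the condition $\partial_H\omega=0$. The two-horizontal, one-vertical component reduces to $\bigl(\Curv_\D(\alpha,\beta)-\ad^\K_{\omega(\alpha,\beta)}\bigr)(X)$, which vanishes by $\Curv_\D=\ad^\K\circ\omega$. The remaining components close automatically: the one-horizontal case is the Leibniz identity for the derivation $\D_\alpha$ against $[\,\cdot\,,\,\cdot\,]_\K$, and the purely vertical case is the Jacobi identity of $\K$ itself. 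Hence admissibility of $(\D,\omega)$ is \emph{equivalent} to the Jacobi identity, and every admissible couple genuinely arises from an extension.

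Finally I would check that the two constructions are mutually inverse on equivalence classes. Starting from a couple, reconstructing $A_E$ and reading off the invariants through the tautological connection returns $(\D,\omega)$ unchanged. Starting from an extension with a connection, the reconstructed algebroid is isomorphic to the original, as an extension, via $X\oplus\alpha\mapsto X+h(\alpha)$. To conclude I would observe that an isomorphism of extensions, after transporting a connection, alters the associated couple only through the change-of-connection formula of the first paragraph; combined with that computation this shows that equivalent extensions yield equivalent couples and, conversely, that couples related by $\Delta$ yield isomorphic extensions. This establishes the claimed bijection between equivalence classes of extensions and equivalence classes of admissible couples.
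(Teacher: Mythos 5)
Your proposal is correct and follows essentially the same route as the paper: fix a connection to split $A_E$ and read off $(\D,\omega)$, reconstruct the bracket on $\K\oplus p^*A_B$ from an admissible couple (the paper records this via its bracket formulas and the relations $\Curv_\D=\ad^\K\circ\,\omega$, $\partial_H\omega=0$, both derived from the Jacobi identity), and observe that a change of connection $\Delta=h'-h$ produces exactly the stated transformation of the couple. Your write-up is in fact more explicit than the paper's, which states the reconstruction and the change-of-connection formulas without detailed verification; your sorting of the Jacobiator by the number of horizontal arguments is precisely the computation the paper leaves implicit.
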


Besides, we explain that given a connection, parallel transport along an $A_B$-path $a\in P(A_B)$ is well
defined (see subsection \ref{parallel}) and, in the case of a complete connection, we obtain the usual notion of holonomy along $a$ as a Lie algebroid morphism
$$\xymatrix{
\K|_{E_{\gamma(0)}} \ar[r]^{\Phi_a}  & \K|_{E_{\gamma(1)}}.
}$$
Here, $\gamma:I\to B$ is the base path of $a$. In general, $\Phi$ does not factor through $A_B$-homotopies unless $\Curv_\D$ vanishes. Note however the relation $\Curv_\D=\ad^\K\circ\,\omega$ suggests that this can by measured by inner automorphisms of $\K$, this will play an important role. In any case, one can still think of $\Phi$ as a groupoid morphism $\Phi:P(A_B)\to
\Gau(\K)$ where 
$${\Gau(\K):=\bigl\{\text{Lie algebroid morphisms }\Psi_{y,x}:
\K|_{E_x}\to \K|_{E_y}\bigr\}}$$ and $P(A_B)$ are both groupoids over $B$.

In Section \ref{Cohomology} we look at the cohomology of an extension. There is a natural filtration, and the choice of a connection allows a rather nice description of the associated spectral sequence.

The Section \ref{integration} is devoted to the description of the Weinstein groupoid $\G(A_E)$. We obtain $\G(A_E)$ as a quotient of $P(A_B)\ltimes \G(\K)$. Here $P(A_B)$ acts on $\G(\K)$ by means of holonomy, and $P(A_B)\ltimes \G(\K)$ corresponds to an associated groupoid over $E$. The precise statement is given by Theorem \ref{thm-2cocycle}:
\begin{thmz} Consider a Lie algebroid extension $\K\hookrightarrow A_E\twoheadrightarrow A_B$
endowed with a Ehresmann connection $A_E=\K\oplus H$, which we assume to be complete.

Then the topological source simply connected groupoid $\G({A_E})$ integrating ${A_E}$ is naturally
identified with the quotient
$$P(A_B)\ltimes_B \G({\K})/{\bf{\sim}},$$
where the equivalence relation is given by: $(a_0,g_0){\bf{\sim}}(a_1,g_1)$ if and only if there exists a
${A_B}$-homotopy $h_B=a dt+b d\epsilon:TI^2\mapsto {A_B}$ between $a_0$ and $a_1$ such that:
$$g_1\cdot g_0^{-1}=\partial (h_B,\t(g_0)).$$ Here, $\partial(h_B, x_0)$ is the element in $\G(\K)$
represented by the ${\K}$-path:
\begin{equation*}\epsilon\rightarrow\int_0^1 (\Phi^{a_B^\epsilon}_{0,s})_*\bigl( \omega(a,b)_{s,\epsilon}\bigr)ds\in {\K}_{{\gamma_K}^\epsilon},
\end{equation*}
where $\gamma_K^\epsilon:=\phi^{-1}_{a_B^\epsilon}\circ \phi_{a_B^0}(x_0).$
\end{thmz}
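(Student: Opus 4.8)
The plan is to establish the identification first at the level of paths and only then descend to homotopy classes. Using the Ehresmann connection $A_E=\K\oplus H$, I would begin by decomposing an arbitrary $A_E$-path $\tilde a$. Its projection $a:=\pi\circ\tilde a$ is an $A_B$-path, and writing $\tilde a=h(a)+k$ with $h(a)$ the horizontal lift and $k$ the $\K$-valued vertical remainder along the base path, the key observation is that $k$ is not yet a $\K$-path in a fixed fibre but becomes one after being trivialized by the parallel transport of the connection. Concretely, applying $\Phi^{a}$ to pull $k$ back to the fibre $\K|_{E_{\gamma(0)}}$ produces a genuine $\K$-path whose $\K$-homotopy class is an element $g\in\G(\K)$. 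This yields an assignment $\tilde a\mapsto(a,g)$, and I would check that, before any quotient, it is a bijection from $A_E$-paths onto the object $P(A_B)\ltimes_B\G(\K)$.

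Next I would verify compatibility with the two groupoid structures. Concatenation of $A_E$-paths projects to concatenation of $A_B$-paths, while the transported vertical parts compose only after one of them is corrected by the holonomy of the intervening $A_B$-path; this correction is exactly the action of $P(A_B)$ on $\G(\K)$ entering the semidirect product, so the path-level bijection intertwines concatenation with the multiplication of $P(A_B)\ltimes_B\G(\K)$. Here the assumed completeness of the connection is essential, since it guarantees that $\Phi^{a}$, hence the holonomy action, is globally defined along every $A_B$-path rather than only on a maximal subdomain.

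The heart of the argument is to identify the equivalence relation cut out by $A_E$-homotopy. An $A_E$-homotopy is a morphism $TI^2\to A_E$; decomposing it through $A_E=\K\oplus H$ gives a horizontal component projecting to an $A_B$-homotopy $h_B=a\,dt+b\,d\epsilon$ together with a vertical component. Splitting the structure equation of the homotopy into its $H$- and $\K$-parts, the horizontal part records precisely that the base projections are $A_B$-homotopic via $h_B$, whereas the vertical part becomes an inhomogeneous transport equation in the $\epsilon$-direction whose source term is the curvature $2$-form $\omega$. Integrating this equation over $t\in[0,1]$ for each $\epsilon$, and invoking the structure equations $\partial_H\omega=0$ and $\Curv_\D=\ad^\K\circ\,\omega$ to absorb the $\K$-bracket and derivation cross-terms, I expect to obtain that the two boundary vertical classes differ exactly by the element $\partial(h_B,x_0)$ represented by $\epsilon\mapsto\int_0^1(\Phi^{a_B^\epsilon}_{0,s})_*\bigl(\omega(a,b)_{s,\epsilon}\bigr)\,ds\in\K_{\gamma_K^\epsilon}$. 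This reproduces the stated relation $g_1\cdot g_0^{-1}=\partial(h_B,\t(g_0))$.

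Finally I would show that $\partial(h_B,x_0)$ depends only on the $A_B$-homotopy class of $h_B$ — again a consequence of $\partial_H\omega=0$ — so that $\sim$ is a bona fide equivalence relation compatible with the groupoid structure, and conclude that the induced map on quotients is an isomorphism of topological groupoids which manifestly preserves source-simple-connectedness, thereby identifying $\G(A_E)$ with $P(A_B)\ltimes_B\G(\K)/\!\sim$. I expect the main obstacle to be the vertical integration of the third step: correctly deriving the inhomogeneous transport equation for the vertical component of an $A_E$-homotopy, tracking the parallel-transport factors $\Phi^{a_B^\epsilon}_{0,s}$ inside the integral at the shifted basepoint $\gamma_K^\epsilon$, and verifying that the resulting $\K$-path is well-defined modulo $\K$-homotopy so that $\partial(h_B,x_0)$ is a genuine element of $\G(\K)$.
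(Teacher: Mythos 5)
Your overall route coincides with the paper's: split $A_E$-paths through the connection and parallel transport, translate the $A_E$-homotopy equation into split form so that the curvature integral $\int_0^1(\Phi^{a_B^\epsilon}_{0,s})_*\bigl(\omega(a,b)_{s,\epsilon}\bigr)\,ds$ appears as the obstruction, and then quotient (this is Proposition \ref{splitpath}, Proposition \ref{prop:splithomotopy} with Lemma \ref{small:lemma}, and the appeal to Proposition \ref{hgeom} in the paper). However, one step of your plan fails as stated: the map $\tilde a\mapsto(a,g)$, with $g\in\G(\K)$ the $\K$-homotopy class of the transported vertical part, is \emph{not} a bijection from $A_E$-paths onto $P(A_B)\ltimes_B\G(\K)$. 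It is surjective (here completeness is used, as you say), but it is never injective: two distinct, merely $\K$-homotopic, $\K$-paths in the fibre over $\gamma_B(0)$ produce two distinct $A_E$-paths with the same image. The correct path-level statement --- the one the paper proves --- is a homeomorphism $P(A_E)\cong P(A_B)\ltimes_B P(\K)$, with $P(\K)$ and not $\G(\K)$; the passage to $\G(\K)$ must wait until both sides are quotiented at the end.

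The gap is reparable within your own scheme, but it requires an argument you did not flag: to descend your surjection to a bijection $\G(A_E)\to\bigl(P(A_B)\ltimes_B\G(\K)\bigr)/\!\sim$, you must prove that two $A_E$-paths with the same image $(a,g)$ are already $A_E$-homotopic. This follows from your third step applied to the constant $A_B$-homotopy (where $b=0$, so the source term $\omega(a,b)$ vanishes and $\partial$ is the unit class), by feeding a $\K$-homotopy between the two vertical parts into the split homotopy criterion; but as written you simply assert injectivity instead of proving this. Two smaller points: the relation $\sim$ asks only for the \emph{existence} of some homotopy $h_B$, so the invariance of $\partial(h_B,x_0)$ under $A_B$-homotopy of $h_B$ (your last step) is not actually needed for the theorem --- the paper relegates it to a remark; and in identifying the ``two boundary vertical classes'' with $g_1\cdot g_0^{-1}$ you are implicitly using the fact that the $\epsilon$-boundary solution of the evolution equation represents the class of $a^1\cdot(a^0)^{-1}$, which is precisely Proposition \ref{hgeom} and the remark following it --- this identification should be made explicit rather than absorbed into ``integrating the transport equation''.
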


In general, the sequence of topological groupoids 
\begin{equation*} 1\to \G(\K)\xrightarrow{\tilde{\iota}} \G(A_E) \xrightarrow{\tilde{\pi}} \G(A_B)\to 1.
\end{equation*}
obtained by integration of a Lie algebroid extension might not be exact. In order to measure this lack of exactness, we denote $\M$ the monodromy groupoid of the extension. It is defined as the kernel of $\tilde{\iota}$. In the case there exists a complete Ehresmann connection, we can build a connecting homomorphism whose image is precisely $\M$, similarly to what is well known for fibrations:
\begin{thmz} Let  $\K\hookrightarrow A_E \twoheadrightarrow A_B$ be a Lie algebroid extension that admits a complete Ehresmann connection. Then there exists a homomorphism
$$\partial_{2}:\pi_2(A_B)\ltimes_{B} E\to \G(\K),$$
that makes the following sequence exact:
$$ \cdots \to \pi_2(A_B)\ltimes E\xrightarrow{\partial_2} \G(\K)\xrightarrow{\tilde{\iota}} \G(A_E)\twoheadrightarrow\G(A_B).$$
\end{thmz}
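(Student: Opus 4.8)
The plan is to construct $\partial_2$ directly from the boundary operator $\partial(\cdot,\cdot)$ of Theorem \ref{thm-2cocycle}, restricting attention to those $A_B$-homotopies that represent classes in $\pi_2(A_B)$. This is the Lie algebroid counterpart of the topological connecting map $\pi_2(B)\to\pi_1(F)$ of a fibration $F\to E\to B$, and the identification $\G(A_E)=P(A_B)\ltimes_B\G(\K)/{\sim}$ of Section \ref{integration} furnishes exactly the bookkeeping needed to compute its image.

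I would realize a class in $\pi_2(A_B)$ over $b_0=p(x_0)$ by an $A_B$-homotopy $h_B=a\,dt+b\,d\epsilon$ whose boundary $A_B$-paths $a_B^0$ and $a_B^1$ are both the trivial path $0_{x_0}$, and set
$$\partial_2\bigl([h_B],x_0\bigr):=\partial(h_B,x_0)\in\G(\K),$$
with $\partial(h_B,x_0)$ the element defined in Theorem \ref{thm-2cocycle}. The first point to verify is that this lands in the isotropy group of $\G(\K)$ at $x_0$: the base of the representing $\K$-path is $\gamma_K^\epsilon=\phi^{-1}_{a_B^\epsilon}\circ\phi_{a_B^0}(x_0)$, and since $a_B^0=a_B^1=0_{x_0}$ we get $\gamma_K^0=\gamma_K^1=x_0$, so source and target both equal $x_0$. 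Completeness of the connection is precisely what guarantees that the flows $\phi_{a_B^\epsilon}$, and hence $\partial(h_B,x_0)$, are globally defined.

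Next I would establish that $\partial_2$ is a well-defined groupoid homomorphism. Well-definedness means that $\partial(h_B,x_0)$ depends only on the $A_B$-homotopy class of the sphere $h_B$ rel boundary: given a homotopy of homotopies between two representatives, the two associated $\K$-paths must be shown to be $\K$-homotopic, so that they represent the same class in $\G(\K)$. This is the heart of the matter and relies crucially on the structure equations $\partial_H\omega=0$ and $\Curv_\D=\ad^\K\circ\,\omega$: it is a Stokes/cocycle computation in which closedness of $\omega$ under $\partial_H$ controls the variation of the integrand $\int_0^1(\Phi^{a_B^\epsilon}_{0,s})_*\omega(a,b)_{s,\epsilon}\,ds$, while the curvature identity accounts, through inner automorphisms of $\K$, for the failure of parallel transport to be flat. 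The homomorphism property (additivity under vertical concatenation of spheres) follows from the same computation together with multiplicativity of the holonomy morphism $\Phi:P(A_B)\to\Gau(\K)$, and commutativity of $\pi_2(A_B)$ is the usual Eckmann--Hilton argument. I expect this well-definedness and naturality step to be the main obstacle.

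Finally I would prove exactness at $\G(\K)$, i.e.\ $\im\partial_2=\Ker\tilde\iota=\M$. The inclusion $\im\partial_2\subseteq\Ker\tilde\iota$ is immediate from Theorem \ref{thm-2cocycle}: if $g=\partial(h_B,x_0)$ with $h_B$ a sphere, then $(0_{x_0},g)\sim(0_{x_0},1)$ in $P(A_B)\ltimes_B\G(\K)$, so $\tilde\iota(g)=1$ in $\G(A_E)$. Conversely, if $g\in\G(\K)$, necessarily in the isotropy at some $x$ since $\tilde\iota$ covers the identity on $E$, satisfies $\tilde\iota(g)=1$, then by the quotient description $(0_x,g)\sim(0_x,1)$, which by definition of $\sim$ means there is an $A_B$-homotopy $h_B$ between the trivial path and itself with $g=\partial(h_B,x)$; such an $h_B$ is exactly a representative of a class in $\pi_2(A_B)$, whence $g\in\im\partial_2$. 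Exactness at $\G(A_E)$, namely $\im\tilde\iota=\Ker\tilde\pi$, is already contained in the integration picture of Section \ref{integration}, so assembling these identifications yields the asserted exact sequence.
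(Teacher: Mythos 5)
Your overall architecture is sound and, in fact, very close to the paper's own: the paper explicitly remarks that its boundary map coincides with the operator $\partial$ of Theorem \ref{thm-2cocycle} applied to $A_B$-spheres, and your exactness argument at $\G(\K)$ (reading $\ker\tilde\iota$ off the quotient description $P(A_B)\ltimes_B\G(\K)/\!\sim$, after observing that an $A_B$-homotopy between two trivial paths is precisely an $A_B$-sphere) is exactly how the paper concludes. Your isotropy and completeness observations are also correct.

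The genuine gap is the step you yourself flag as ``the main obstacle'': the proof that $\partial(h_B,x_0)$ depends only on the class of $h_B$ in $\pi_2(A_B)$. This cannot be extracted formally from Theorem \ref{thm-2cocycle}: since $\sim$ is an equivalence relation, the theorem only tells you that for two spheres $h_B,h_B'$ at $x_0$ the quotient $\partial(h_B',x_0)\,\partial(h_B,x_0)^{-1}$ is again of the form $\partial(h_B'',x_0)$ for some sphere $h_B''$, which is far from the equality $\partial(h_B,x_0)=\partial(h_B',x_0)$ needed for descent to $\pi_2(A_B)$. Your proposal only gestures at this step (``a Stokes/cocycle computation'' using $\partial_H\omega=0$ and $\Curv_\D=\ad^\K\circ\,\omega$) without carrying it out, and this is precisely where the bulk of the paper's proof lies --- by a different mechanism than the one you sketch. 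Given a homotopy of spheres $s_B^u$, the paper lifts the data horizontally: it extends the $\d\epsilon$-component to sections $\beta_B^u$ vanishing at $t=0,1$, solves the evolution equation in $A_E$ that makes $\alpha\,\d t+h(\beta_B)\,\d\epsilon$ a Lie algebroid morphism $TI^2\to A_E$ with $\alpha|_{\epsilon=0}=0$ (the $\epsilon=1$ slice is then a $\K$-path, by Proposition \ref{prop:splithomotopy}, and its $\K$-homotopy class is the intrinsic definition of $\partial_2$), and then produces the required $\K$-homotopy between the $u=0$ and $u=1$ slices as the solution $\theta$ of the companion evolution equation $\frac{d\theta}{d\epsilon}-\frac{d}{du}h(\beta_B)=[\theta,h(\beta_B)]_{A_E}$, $\theta|_{\epsilon=0}=0$. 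The key technical input is Lemma \ref{spherelemma}, which guarantees that $\theta$ also satisfies the evolution equation in the $(t,u)$-directions and vanishes at $t=0,1$, hence is a genuine $\K$-homotopy. Until you supply an argument of this kind (or make your Stokes-type computation precise), $\partial_2$ is not well defined on $\pi_2(A_B)\ltimes_B E$, and with it the homomorphism property --- which the paper obtains from the concatenation identity $\partial(\tilde h\cdot h)=\partial\tilde h\cdot\partial h$ rather than from an Eckmann--Hilton argument --- is also left unestablished.
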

Here, $\pi_2(A_B)$ denote the second homotopy groups of $A_B$, as defined in the Appendix \ref{app:spheres}. In the case $A_B$ is integrable, it coincides with the union of all second homotopy groups of the source fibers, seen as
a bundle of groups over $B$. In this work we do not try to put any topology on $\pi_2(A_B)$ whose role is
essentially algebraic.

 The interest of the monodromy groupoid $\M$ lies in the fact that it controls the integrability of $A_E$ in the case of \emph{clean} extensions. By \emph{clean}, we mean by definition that the
restrictions of $\pi$ to the isotropy Lie algebras of $A_E$ surject onto those of
$A_B$. This implies that, pointwise, one has exact sequences of Lie algebras
$$(\ker \sharp_\K)_{x}\hookrightarrow (\ker \sharp_E)_{x} \twoheadrightarrow (\ker \sharp_B)_{p(x)}.$$
In this case, we show that $A_E$ is integrable provided $A_B$ and $\K$ are, and if $\M$ is discrete
near identities (see \ref{discreteness} for the precise definition).

\begin{thmz} Let $\K\hookrightarrow A_E \twoheadrightarrow A_B$ be a clean Lie algebroid extension.
Assume that it admits a complete Ehresmann connection and that both $\K$ and $A_B$ are
integrable Lie algebroids.

Then $A_E$ is integrable if $\mathcal{M}$ is discrete near identities in $\G(\K)$.
\end{thmz}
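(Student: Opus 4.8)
The plan is to assemble the integration of $A_E$ from the already-established quotient description and then analyze precisely when the resulting topological groupoid is smooth. By the Theorem on $\G(A_E)$ (Theorem \ref{thm-2cocycle}), we have $\G(A_E)\cong P(A_B)\ltimes_B\G(\K)/{\sim}$ as a topological groupoid, and integrability of $A_E$ amounts to showing this quotient carries a smooth structure making it a Lie groupoid with the correct Lie algebroid. Since $A_B$ and $\K$ are assumed integrable, both $\G(A_B)$ and $\G(\K)$ are smooth, and the action of $P(A_B)$ on $\G(\K)$ by holonomy descends (using the complete connection) to a genuine action of $\G(A_B)$ up to the monodromy ambiguity. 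So first I would set up the semidirect product groupoid $P(A_B)\ltimes_B\G(\K)$ and identify the equivalence relation ${\sim}$ with the orbits of the $\partial_2$-map from the previous Theorem.

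Next I would invoke the exact sequence $\pi_2(A_B)\ltimes E\xrightarrow{\partial_2}\G(\K)\xrightarrow{\tilde\iota}\G(A_E)\to\G(A_B)$ to reduce smoothness of $\G(A_E)$ to a statement about the image $\M=\im\partial_2$. The cleanness hypothesis is what makes this reduction work: the pointwise exactness $(\ker\sharp_\K)_x\hookrightarrow(\ker\sharp_E)_x\twoheadrightarrow(\ker\sharp_B)_{p(x)}$ guarantees that the isotropy Lie algebras fit together compatibly, so that the only obstruction to integrating $A_E$ beyond the integrability of $A_B$ and $\K$ is the monodromy $\M$. Concretely, I would show that $\G(A_E)$ is smooth precisely when the quotient $\G(\K)/\M$ is a smooth manifold near the identities, and that this is controlled by whether $\M$ sits discretely inside $\G(\K)$. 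This mirrors the classical Crainic–Fernandes criterion for integrability via monodromy groups, adapted to the extension setting.

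The key technical step is to verify that discreteness of $\M$ near identities is sufficient for $\G(\K)/\M$ to inherit a smooth structure, i.e.\ that $\M$ acts properly and freely (or at least as a discrete embedded subgroupoid) in a neighborhood of the units. I would argue that $\M$, being the image of the connecting homomorphism $\partial_2$ defined via the $\K$-path $\epsilon\mapsto\int_0^1(\Phi^{a_B^\epsilon}_{0,s})_*\omega(a,b)_{s,\epsilon}\,ds$, is a normal subgroupoid of $\G(\K)$ contained in its center-like part (landing in the isotropy by cleanness), so the quotient is again a groupoid; discreteness near identities then ensures the quotient map is a local diffeomorphism near units, yielding a smooth groupoid structure. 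One then checks the Lie algebroid of this quotient is $A_E$, using that differentiating the quotient construction recovers the extension data $(\D,\omega)$.

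The main obstacle I expect is precisely this last smoothness argument: showing that ``discrete near identities'' is exactly the right condition and that no further monodromy from higher homotopy or from the holonomy action obstructs smoothness. In the fibration and Lie algebra cases the analogous monodromy is a subgroup of a vector space or of the connected group, and discreteness gives a covering-space quotient; here $\M$ is a subgroupoid of $\G(\K)$ varying over $E$, so I would need to control its behavior uniformly in the base point, using the complete connection to trivialize $\G(\K)$ along holonomy and the cleanness to keep $\M$ inside the isotropy where the quotient is well-behaved. Verifying that the topology near identities is governed solely by $\M$ — and not by pathologies in $P(A_B)\ltimes_B\G(\K)$ away from the units — is the crux, and I would handle it by a local model near the identity section combined with the Crainic–Fernandes obstruction theory applied fiberwise.
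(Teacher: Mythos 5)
Your strategy --- building a smooth structure directly on the quotient $P(A_B)\ltimes_B\G(\K)/{\sim}$ of Theorem \ref{thm-2cocycle} --- is genuinely different from the paper's, but it breaks down at exactly the step you call the crux, and the way you propose to close it does not work. Knowing that $\G(A_B)$ is smooth and that $\G(\K)/\M$ is smooth near units does not make $\G(A_E)$ smooth: a sequence of topological groupoids $1\to\G(\K)/\M\to\G(A_E)\xrightarrow{\tilde\pi}\G(A_B)\to 1$ with smooth kernel and smooth quotient carries a compatible smooth structure only if one also proves some local triviality of $\tilde\pi$, and nothing in the quotient description provides this; the relation $\sim$ mixes the infinite-dimensional directions of $P(A_B)$ with $\G(\K)$ through $\partial(h_B,\cdot)$, so the assertion that ``the topology near identities is governed solely by $\M$'' is precisely what must be proved, not a tool one can use. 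Your fallback --- ``Crainic--Fernandes obstruction theory applied fiberwise'' --- is not an argument; once that theory is invoked one is forced into the paper's actual proof, which never constructs charts at all. The paper instead verifies the Crainic--Fernandes integrability criterion in sequential form: any sequence of monodromy elements of $A_E$ represented by \emph{constant central} paths $v_n\in \Z(\g^E_{x_n})$ converging to a trivial path must be eventually trivial. This is done by a diagram chase: push $v_n$ to $A_B$ and use integrability of $A_B$ to conclude $\pi(v_n)=0_{y_n}$ eventually; deduce that $v_n\in\Z(\g^\K_{x_n})$, so that $[v_n]_\K\in\M$ converges to an identity and discreteness of $\M$ forces $[v_n]_\K=\mathbf{1}_{x_n}$; finally use integrability of $\K$ to get $v_n=0_{x_n}$.

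A second, related gap is that you assign cleanness the wrong role. That $\M$ lies in the isotropy of $\G(\K)$ is automatic --- it is the kernel of a groupoid morphism covering $\mathrm{id}_E$ --- and needs no hypothesis. What cleanness actually buys (and what your reduction silently needs) is the pointwise exact sequence of centers of isotropy algebras, equation (\ref{exactcenters}) of the paper,
\begin{equation*}
\Z(\g^{\K}_{x})\hookrightarrow \Z(\g^E_{x})\twoheadrightarrow \Z(\g^B_{p(x)}),
\end{equation*}
which is what guarantees that a central monodromy representative for $A_E$ projects to a central one for $A_B$, and that once this projection dies the element genuinely lies in $\Z(\g^\K_x)$, i.e.\ the leftover monodromy is seen by $\M$. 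Without this, your claim that ``the only obstruction to integrating $A_E$ beyond the integrability of $A_B$ and $\K$ is the monodromy $\M$'' has no justification, and it is exactly where a non-clean extension would make the statement's proof collapse. So your proposal identifies the right ingredients (discreteness of $\M$, integrability of $\K$ and $A_B$, completeness of the connection) but connects them by an unproved smoothness-of-quotients mechanism; the missing idea is the reduction to constant central paths via the Crainic--Fernandes criterion, glued to the extension by the center exact sequence that cleanness furnishes.
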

\noindent \textbf{Acknowledgments.} I would like to thank Rui Loja Fernandes for the many useful
discussions and suggestions that helped to improve this work.

\newpage
\tableofcontents

\section{Generalities on Lie algebroids}\label{generalities}
We first briefly recall basic definitions concerning Lie algebroids \cite{Pr} and refer to \cite{CW}, \cite{M} for more details.

\begin{defn}\label{defalgebroid}
 A \textbf{Lie algebroid} $A\to M$ is a vector bundle $A$ over a smooth manifold $M$, together with vector bundle homomorphism $\sharp:A\to TM$ called the anchor, and a bracket $[\ ,\ ]_A$ defined on the space $\Gamma(A)$ of sections of $A$ such that for any smooth function $f\in C^\infty(M)$ and any section $\alpha, \beta \in\Gamma(A)$ of $A$, the following conditions hold
\begin{enumerate}[i)]
 \item $[\alpha,f\beta]_A=f[\alpha,\beta]_A+\Lie_{\sharp(\alpha)}(f) \beta$; \label{Leibniz}
 \item $[\alpha,\beta]_A=-[\beta,\alpha]_A$;
 \item $\sharp([\alpha,\beta]_A)=[\sharp(\alpha),\sharp(\beta)]_{TM}$; \label{anchor}
 \item $\displaystyle \oint_{\alpha,\beta,\gamma} \bigl[[\alpha,\beta]_A,\gamma\bigr]_A=0.$\label{Jacobi}
\end{enumerate}
In other words, the bracket $[\ ,\ ]_A$ induces a structure of real Lie algebra on sections of $A$ such that the Leibniz condition (\ref{Leibniz}) holds and $\sharp$ is a Lie algebra homomorphism from $\Gamma(A)$ to $\X(M):=\Gamma(TM)$.
\end{defn}
The cohomology associated to a Lie algebroid is defined on the complex of cochains
$C^k:=\Gamma(\Lambda^k A^*)$ which we might also denote $\Omega^k(A)$ (or $\Omega^k(M)$ for short
when $A=TM$). The differential operator is given by the usual formula
\begin{multline} \langle \d_A c,\alpha_0\dots\alpha_k\rangle:=\sum_{i=0\dots n}(-1)^i\Lie_{\sharp \alpha_i} \langle c,\alpha_0\dots \hat{\alpha_i}\dots\alpha_k\rangle
\\
 +\sum_{0\leq i<j\leq k} (-1)^{i+j}\langle c,[\alpha_i,\alpha_j]_A ,\alpha_0\dots\hat{\alpha_i}\dots \hat{\alpha_j}\dots\alpha_k\rangle.
\end{multline}

\begin{defn} Given two Lie algebroids $A_M\to M$ and $A_N\to N$, a \textbf{Lie algebroid homomorphism} $\Phi:A_M\to A_N$ over $\phi:M \to N$ is a vector bundle homomorphism $\Phi$ covering $\phi$ such that the map induced on forms: $$\Phi^*:\Omega^k (A_N)\to \Omega^k( A_M),$$
 defined by $\langle\Phi^*(u),a\rangle=\langle {u}_{\phi(m)},\Phi(a) \rangle$, commutes with the differential:
$$\d_{A_M}\circ \Phi^*=\Phi^*\circ \d_{A_N}.$$
\end{defn}
When $\phi$ is a diffeomorphism, we shall denote $\Phi_*:\Gamma(A)\to\Gamma(A)$ the application induced on sections, $\Phi_*(\alpha):=\Phi\circ\alpha\circ\phi^{-1}$. See for instance \cite{HM} for a detailed discussion on Lie algebroid morphisms.

Recall that the isotropy at a point $x\in M$ is defined as the kernel of the anchor, $\g_x^A:=\ker \sharp_x$. It is naturally endowed with a structure of Lie algebra. In general, the dimension of the isotropy algebra may vary from one point to another, so all isotropies do not fit in a smooth bundle of Lie algebras in general.
\subsection{Derivations.}\label{derivations}
\begin{defn}A \textbf{derivation} of a Lie algebroid $A\to M$ is an application
$$D:\Gamma(A)\to\Gamma(A),$$
together with a vector field $s_D\in\X(M)$ (called the \textbf{symbol} of the derivation) satisfying for
any smooth function $f\in C^\infty(M)$ and any two sections of $A$ $\alpha,\beta\in \Gamma(A)$
\begin{align*}
   D(f\alpha)            &= fD(\alpha)+s_D(f)\alpha,\\
   D([\alpha,\beta]_A)   &=  [D(\alpha),\beta]_A+[\alpha,D(\beta)]_A,\\
   \sharp(D(\alpha))   &= [s_D,\sharp(\alpha)]_{TM}.
  \end{align*}

\end{defn}
A derivation is the infinitesimal version of a Lie algebroid automorphism: any derivation
determines a vector field on  $A$ whose flow is a Lie algebroid automorphism \emph{et vice versa}
(see for instance the appendix in \cite{CrFe2}). We will denote $\Der(A)$ the space of all
derivations, it has a natural structure of Lie algebra, whose bracket is given by the commutator
$[D_1,D_2]=D_1\circ D_2-D_2\circ D_1$.
\begin{defn}
An \textbf{inner derivation} is a derivation of the form $D(\alpha)=[\delta,\alpha]_A$ for some smooth
section $\delta\in \Gamma(A)$, in which case we will write $D=\ad^A_{\delta}$.
\end{defn}
There are obvious notions of time-dependent derivations and inner derivations, corresponding to
which we get time-dependent vector fields on $A$. In particular, given a time-dependent section
$\alpha^t\in\Gamma(A)$, we shall denote $\psi^\alpha_{t,s}:A\to A$ the flow on $A$ associated to
 $\ad_{\alpha^t}^A$.

\subsection{Extensions of Lie algebroids}\label{extensions}%
The notion of extension we will use in this work is summarized below.

\begin{defn} An \textbf{extension} of a Lie algebroid $A_B$ is a surjective Lie algebroid morphism $\pi:A_E\to A_B$ covering a surjective submersion $p:E\to B$.
\end{defn}

With this definition,  $\K:=\ker\ \pi$ is a smooth sub-vector bundle of $A_E$ that, as proven below, turns out to be a Lie sub-algebroid of $A_E$. Thus, we obtain an exact sequence of Lie
algebroids:
$$ \K\hookrightarrow A_E \twoheadrightarrow A_B.$$
We shall refer to $A_E$ as an extension of $A_B$ by $\K$. As generic notations, we will write $\sharp_E:A_E\to TE $, $\sharp_B:A_B\to TB$, $\sharp_\K:\K\to
TE$ and $[\ ,\ ]_{A_E}$, $[\ ,\ ]_{A_B}$, $[\ ,\ ]_{\K}$ the respective anchors and brackets. The isotropy algebras will be denoted $\g_x^E:=\ker (\sharp_E)_x$, $\g_b^B:=\ker (\sharp_B)_b$,  and $\g_x^\K:=\ker (\sharp_\K)_x=\g_x^E\cap \K$, for any $x\in E,b \in B$.

\begin{rem}
 Note that the injection $\K\subset A_E$ does not determine $\pi$ (not even $p$) so one should not think of $A_B$ as a quotient of $A_E$ by $\K$.
\end{rem}

\begin{defn} Given an extension $\pi:A_E\twoheadrightarrow A_B$, we will refer to a \textbf{$\pi$-projectable} section $\mu$ of $A_E$ any section $\mu$ such that, for any form $\nu\in\Omega^1(A_B)$, the contraction $\langle\pi\circ\mu,\nu\rangle$ is a basic function on $E\to B$. This means that $\pi\circ\mu$ induces a well defined section of $A_B$ that we will denote $\pi(\mu)$.
\end{defn}

\begin{lem}\label{projectableanchor}
 Let $\mu\in \Gamma(A_E)$ be a $\pi$-projectable section, then $\sharp_E\circ\mu$ is $p_*$-projectable, moreover:
$$p_*(\sharp_E(\mu))=\sharp_B(\pi(\mu)). $$
\end{lem}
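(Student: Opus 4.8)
The plan is to prove the identity by testing it against an arbitrary one-form $\nu \in \Omega^1(A_B)$ and exploiting the compatibility of the morphism $\pi$ with the differentials. The key observation is that saying $\sharp_E \circ \mu$ is $p_*$-projectable means that for each such $\nu$, the function $\langle p^*\nu, \sharp_E(\mu)\rangle$ depends only on the base point; I would like to identify this function directly with $\langle \nu, \sharp_B(\pi(\mu))\rangle \circ p$.

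First I would unwind the definitions on functions. For $f \in C^\infty(B)$, the morphism property of $\pi$ gives $\pi^*(\d_{A_B} f) = \d_{A_E}(p^* f)$, since $\pi$ covers $p$ and $\pi^*$ on functions is just $p^*$. Pairing both sides with $\mu$ yields
\begin{equation*}
\langle \d_{A_E}(p^* f), \mu\rangle = \langle \pi^*(\d_{A_B} f), \mu\rangle = \langle \d_{A_B} f, \pi(\mu)\rangle,
\end{equation*}
where the last equality uses that $\mu$ is $\pi$-projectable, so $\pi \circ \mu$ descends to $\pi(\mu) \in \Gamma(A_B)$. By the definition of the algebroid differential on functions, the left-hand side is $\Lie_{\sharp_E(\mu)}(p^*f) = (p^*f)$ differentiated along $\sharp_E(\mu)$, while the right-hand side is $\left(\Lie_{\sharp_B(\pi(\mu))} f\right) \circ p$. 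Thus for every $f \in C^\infty(B)$ we get
\begin{equation*}
\Lie_{\sharp_E(\mu)}(p^* f) = \bigl(\Lie_{\sharp_B(\pi(\mu))} f\bigr)\circ p.
\end{equation*}

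This last equation is precisely the statement that $\sharp_E(\mu)$ and $\sharp_B(\pi(\mu))$ are $p$-related vector fields: the action of $\sharp_E(\mu)$ on any pulled-back function $p^*f$ equals the pullback of the action of $\sharp_B(\pi(\mu))$ on $f$. Since $p$ is a submersion, pulled-back functions separate the relevant directions, and I would conclude that $\sharp_E(\mu)$ is $p_*$-projectable with $p_*(\sharp_E(\mu)) = \sharp_B(\pi(\mu))$, which is the claim. The main obstacle, such as it is, lies in being careful about the two meanings of $\pi^*$ and the identification of the differential's value on functions with the Lie derivative along the symbol; once the morphism condition $\d_{A_E}\circ \pi^* = \pi^* \circ \d_{A_B}$ is applied to functions $p^*f$, the argument is essentially a clean pairing computation with no genuine analytic difficulty.
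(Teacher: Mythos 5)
Your proof is correct and follows essentially the same route as the paper's: both apply the morphism condition $\d_{A_E}\circ\pi^* = \pi^*\circ \d_{A_B}$ to functions ($k=0$), pair with $\mu$, and use $\pi$-projectability to see that the right-hand side is a basic function, hence that $\sharp_E(\mu)$ and $\sharp_B(\pi(\mu))$ are $p$-related. Your write-up is, if anything, slightly more explicit about why $p$-relatedness on pulled-back functions suffices when $p$ is a submersion.
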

\begin{proof}
Recall the condition for $\pi$ to be a Lie algebroid morphism: $d_{A_E}\circ \pi^*=\pi^*\circ
d_{A_B}$ where $\pi^*:\Omega^k(A_B^*)\to \Omega^k(A_E)$ is naturally induced by $\pi$. For $k=0$,
this condition writes:
\begin{align*}
 &    &{\langle}\d_{A_E}\circ \pi^*(f),\mu{\rangle}&={\langle}\pi^*\circ \d_{A_B}(f),\mu{\rangle} & \quad (f\in C^\infty(B), \mu\in \Gamma(A_E))\\
&\iff & {\langle}\d (\pi^*f),\sharp_E(\mu){\rangle}&={\langle} \d_A(f), \pi\circ \mu{\rangle} \\
&\iff & {\langle}\d (f\circ p),\sharp_E(\mu){\rangle}&={\langle} \d f,\sharp_B\circ \pi\circ \mu{\rangle} \\
&\iff & {\langle}\d f\circ \d p,\sharp_E(\mu){\rangle}&= {\langle}\d f,\sharp_B \circ\pi\circ \mu{\rangle},\\
  \end{align*}
which is \emph{\`a priori} an equality in $C^\infty(E)$. However, if we assume $\mu$ to be $\pi$-projectable, the right-hand term is in fact a basic function, which implies that $\sharp_E(\mu)$ $p_*$-projects onto $\sharp_B(\pi(\mu))$.
\end{proof}
\begin{lem}\label{projectablebrackets}
 Let $\mu,\nu \in\Gamma(A_E)$ be $\pi$-projectable sections, then $[\mu,\nu]_{A_E}$ is also projectable, moreover:
$$ \pi([\mu,\nu]_{A_E})=[\pi(\mu),\pi(\nu)]_{A_B}.$$
\end{lem}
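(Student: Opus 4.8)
The plan is to exploit the Lie algebroid morphism condition $\d_{A_E}\circ\pi^*=\pi^*\circ\d_{A_B}$ in degree one, exactly paralleling the degree-zero computation carried out in Lemma \ref{projectableanchor}. First I would fix an arbitrary $1$-form $\xi\in\Omega^1(A_B)$ and evaluate both sides of this identity on the pair $(\mu,\nu)$. Using the Koszul formula for the differential on $1$-forms (the $k=1$ case of the general formula), the left-hand side reads
$$\langle\d_{A_E}(\pi^*\xi),\mu,\nu\rangle=\Lie_{\sharp_E\mu}\langle\pi^*\xi,\nu\rangle-\Lie_{\sharp_E\nu}\langle\pi^*\xi,\mu\rangle-\langle\pi^*\xi,[\mu,\nu]_{A_E}\rangle,$$
while the right-hand side, since $\mu$ and $\nu$ are $\pi$-projectable, becomes
$$\langle\pi^*(\d_{A_B}\xi),\mu,\nu\rangle=\bigl(\Lie_{\sharp_B\pi(\mu)}\langle\xi,\pi(\nu)\rangle-\Lie_{\sharp_B\pi(\nu)}\langle\xi,\pi(\mu)\rangle-\langle\xi,[\pi(\mu),\pi(\nu)]_{A_B}\rangle\bigr)\circ p.$$

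Next I would match the first two terms on each side. By definition of projectability, $\langle\pi^*\xi,\nu\rangle=\langle\xi,\pi(\nu)\rangle\circ p$ is a basic function, and likewise for $\mu$. Lemma \ref{projectableanchor} tells us that $\sharp_E(\mu)$ is $p_*$-projectable onto $\sharp_B(\pi(\mu))$, so for any $g\in C^\infty(B)$ one has $\Lie_{\sharp_E\mu}(g\circ p)=(\Lie_{\sharp_B\pi(\mu)}g)\circ p$. Applying this to $g=\langle\xi,\pi(\nu)\rangle$, and symmetrically to the second term, shows that the first two terms on the two sides coincide. Cancelling them in the morphism identity leaves
$$\langle\pi^*\xi,[\mu,\nu]_{A_E}\rangle=\langle\xi,[\pi(\mu),\pi(\nu)]_{A_B}\rangle\circ p.$$

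Finally, rewriting the left-hand side as $\langle\xi,\pi\circ[\mu,\nu]_{A_E}\rangle$, this identity states that for every $\xi\in\Omega^1(A_B)$ the contraction $\langle\pi\circ[\mu,\nu]_{A_E},\xi\rangle$ equals the basic function $\langle\xi,[\pi(\mu),\pi(\nu)]_{A_B}\rangle\circ p$. By definition this is precisely the assertion that $[\mu,\nu]_{A_E}$ is $\pi$-projectable, and comparing the two sides then identifies its projection as $\pi([\mu,\nu]_{A_E})=[\pi(\mu),\pi(\nu)]_{A_B}$. The only delicate point is the bookkeeping with basic functions—ensuring that the Leibniz-type terms produced by the anchors match up—which is handled entirely by the $p_*$-projectability supplied by Lemma \ref{projectableanchor}; everything else is a direct substitution into the Koszul formula for $\d$.
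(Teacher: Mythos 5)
Your proof is correct and follows exactly the route the paper takes: the paper's own proof simply invokes the morphism condition $\d_{A_E}\circ\pi^*=\pi^*\circ\d_{A_B}$ for $k=1$ together with Lemma \ref{projectableanchor}, which is precisely the argument you carry out. You have merely written out in full the Koszul-formula computation and the cancellation of the Lie-derivative terms that the paper leaves implicit.
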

\begin{proof}
This is an immediate consequence of the condition $\d_{A_E}\circ \pi^*=\pi^*\circ \d_{A_B}$ $\pi^*:\Omega^k(A_B)\to \Omega^k(A_E)$ for $k=1$ and of the preceding lemma.
\end{proof}

As a consequence of the two last lemmas, we can state the following.

\begin{prop}\label{kernel}
Let $\pi:A_E\to A_B$ be a Lie algebroid extension. Then  $\K:=\ker\ \pi$ is a Lie algebroid
on $E$ whose characteristic foliation lies in $\Ver:=\ker \ p_*\subset TE$.
\end{prop}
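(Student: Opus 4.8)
The plan is to show two things: first that $\K=\ker\pi$ is closed under the bracket $[\ ,\ ]_{A_E}$ (and thus inherits a Lie algebroid structure from $A_E$), and second that its anchor $\sharp_\K$ takes values in $\Ver=\ker p_*$, which immediately gives that its characteristic foliation lies in $\Ver$.

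First I would establish the bracket-closure. A section $\kappa\in\Gamma(A_E)$ lies in $\Gamma(\K)$ precisely when $\pi\circ\kappa=0$, and such a $\kappa$ is trivially $\pi$-projectable with $\pi(\kappa)=0$. Hence for $\kappa,\lambda\in\Gamma(\K)$, Lemma \ref{projectablebrackets} applies and gives
$$\pi([\kappa,\lambda]_{A_E})=[\pi(\kappa),\pi(\lambda)]_{A_B}=[0,0]_{A_B}=0,$$
so $[\kappa,\lambda]_{A_E}\in\Gamma(\K)$. Since $\K$ is a smooth sub-vector bundle of $A_E$ closed under the bracket, the restriction of $[\ ,\ ]_{A_E}$ and of $\sharp_E$ to $\K$ define a Lie algebroid structure on $\K\to E$; the Leibniz rule, antisymmetry, anchor-compatibility, and Jacobi identity are all inherited directly from those of $A_E$. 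I should be slightly careful that closure on sections indeed yields a well-defined bracket on the subbundle, but since $\K$ has constant rank (it is the kernel of the fibrewise-surjective bundle map $\pi$), this is automatic.

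Next I would identify the anchor. For $\kappa\in\Gamma(\K)$ we again have $\pi(\kappa)=0$, so Lemma \ref{projectableanchor} yields
$$p_*\bigl(\sharp_E(\kappa)\bigr)=\sharp_B\bigl(\pi(\kappa)\bigr)=\sharp_B(0)=0.$$
Thus $\sharp_\K(\kappa)=\sharp_E(\kappa)$ lies in $\ker p_*=\Ver$ at every point, which says exactly that the image of $\sharp_\K$ is contained in $\Ver$. Consequently the characteristic distribution of $\K$, spanned pointwise by the images of $\sharp_\K$, is a subdistribution of $\Ver$, and the characteristic foliation integrating it lies inside $\Ver$.

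The two lemmas do essentially all the work, so there is no serious obstacle here; the statement is really a packaging of Lemmas \ref{projectableanchor} and \ref{projectablebrackets}. The only point requiring a little care is the passage from closure of the bracket on \emph{sections} of $\K$ to a genuine Lie algebroid structure on the \emph{bundle} $\K$—one must note that $\K$ is an embedded subbundle of constant rank so that $\Gamma(\K)$ is a $C^\infty(E)$-submodule on which the restricted operations are defined, and that all the algebroid axioms descend verbatim from $A_E$. I would phrase the proof compactly by simply invoking the two lemmas for the case $\pi(\kappa)=0$.
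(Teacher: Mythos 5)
Your proof is correct and follows exactly the route the paper intends: the paper states this proposition "as a consequence of the two last lemmas," i.e.\ Lemmas \ref{projectableanchor} and \ref{projectablebrackets} applied to sections with $\pi(\kappa)=0$, which is precisely what you do. Your additional remarks (that sections of $\K$ are trivially $\pi$-projectable, and that $\K$ has constant rank so the restricted bracket and anchor give a genuine Lie algebroid structure) merely make explicit the details the paper leaves to the reader.
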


In particular, if we restrict the vector bundle $\K$ to a fiber $E_x:=p^{-1}(\{x\})$ over any $x\in B$, we obtain a nice Lie algebroid $\K|_{E_x}$ over $E_x$.

\section{Ehresmann Connections}\label{Ehresmann}
We now introduce a notion of connection \cite{Ehr} for extensions of Lie algebroids.

\begin{defn}
 An \textbf{Ehresmann connection} on a Lie algebroid extension $\K\hookrightarrow A_E \twoheadrightarrow A_B$ is a smooth sub-vector bundle $H\subset A_E$ complementary to $\K$ in $A_E$:
$$\K\oplus H=A_E.$$
We will call $H$ the horizontal distribution of the connection.
\end{defn}
Clearly, such a connection always exists and there is an associated notion of \textbf{horizontal lifting} $h:\Gamma(A_B)\mapsto \Gamma(H)$: we define $h(\alpha)$ as the unique section of $H$ that $\pi$-projects on $\alpha$.

We might use the terminology of connection in a rather unristricted way, to refer either to the operator
$h$ or to the horizontal sub-bundle $H$.

\begin{defn}
 Given an Ehresmann connection, we define the associated \textbf{curvature $2$-form} $\omega\in \Omega^2(A_B)\otimes \Gamma(\K)$ (otherwise stated, all tensor products are to be taken over the commutative ring ${C^\infty(B)}$) by the following formula:
$$ \omega(\alpha,\beta):=h([\alpha,\beta]_{A_B})-[h(\alpha),h(\beta)]_{A_E}.$$
\end{defn}
The fact that $\omega$ takes values in $\Gamma(\K)$ is a direct consequence of lemma \ref{projectablebrackets}.
A connection also determines a $C^\infty(B)$-linear application $\D: \Gamma(A_B)\to \Der(\K)$ defined by:
$$\D_\alpha (\kappa):=[h(\alpha),\kappa]_{A_E}.$$ More precisely for any sections $\alpha, \beta\in \Gamma(A_B)$, any function $g\in\C^\infty(B)$ and any sections $\kappa, \kappa_1,\kappa_2\in\Gamma(\K)$:
\begin{eqnarray}
\label{derivationlinearity1} \D_{\alpha}+\D_{\beta}&=&\D_{\alpha+\beta},\\
\label{derivationlinearity2} \D_{g.\alpha}&=&g.\D_\alpha,\\
\label{derivationkernel}     \D_\alpha([\kappa_1,\kappa_2]_\K)&=&[\D_\alpha (\kappa_1),\kappa_2]_\K+[\kappa_1,\D_\alpha (\kappa_2)]_\K,\\
\label{derivationsymbol}      \sharp_K(\D_{\alpha}(\kappa))&=&[s_{\D_\alpha},\kappa]_{TE}.
\end{eqnarray}

Here, the first two equalities express the $C^\infty(B)$-linearity of $\D$; equality (\ref{derivationlinearity1}) is clear and (\ref{derivationlinearity2}) follows from Proposition \ref{kernel}. The last two equations express the fact that $\D$ has values in derivations of $\K$: equation (\ref{derivationkernel}) is a consequence of Jacobi identity (\ref{Jacobi}) and (\ref{derivationsymbol}) follows from (\ref{anchor}) in Definition \ref{defalgebroid}.

As a derivation of $\K$, each $\D_\alpha$ naturally extends to a derivation of the graded algebra
$\Gamma(\Lambda^k \K)$ of multi-sections of $\K$:
$$\D_\alpha (\kappa_1\wedge\dots\wedge\kappa_k):=\sum_{i=1\dots k} \kappa_1\wedge \dots\wedge {\D_\alpha(\kappa_i)}\wedge \dots \wedge\kappa_k,\shoveleft{(\kappa_i\in\Gamma(\K)),}$$
and by duality to multi-sections of $\Omega^k(\K)$:
$$\langle \D_\alpha \theta,\kappa\rangle:=\Lie_{s_{\D_\alpha}}\langle\theta,\kappa\rangle-\langle  \theta,\D_\alpha\kappa\rangle, \shoveleft{\bigl(\theta\in\Omega^k(\K),\ \kappa\in\Gamma(\Lambda^k \K)\bigr).}$$

The covariant differential associated to $\D$ will be denoted
$\partial_H:\Omega^k(A_B)\otimes\Gamma(\Lambda^l \K)\to\Omega^{k+1}(A_B)\otimes\Gamma(\Lambda^l
\K)$ to fit with classical notations (note however that it is related to $\D$ rather than $H$
since two different horizontal distributions can induce the same derivations). It is given by the
usual formula:
\begin{multline}
\partial_H \theta\ (\alpha_0\dots\alpha_k):=\sum_{i=0\dots k} (-1)^{i}\D_{\alpha_k}\  \theta(\alpha_0\dots\hat{\alpha_i}\dots\alpha_k)\\
+\sum_{0\leq i<j\leq k}(-1)^{i+j}\theta([\alpha_i,\alpha_j]_{A_B}\alpha_0\dots\hat{\alpha}_i\dots\hat{\alpha}_j\dots\alpha_k).
\end{multline}
This exact formula can be dualized to define a covariant derivative
$\partial_H:\Omega^k(A_B)\otimes\Omega^l(\K)\to\Omega^{k+1}(A_B)\otimes\Omega^l(\K)$
which we will also denote $\partial_H$.

The curvature $\Curv_{\D}$ of $\D$ is defined by
$\Curv_{\D}(\alpha,\beta):=\D_\alpha\circ\D_\beta-\D_\beta\circ\D_\alpha-\D_{[\alpha,\beta]_{A_B}}$.
By construction, it is related to the curvature $2$-form $\omega$ by:
\begin{equation}\label{curvatures} \Curv_{\D}(\alpha,\beta)(\kappa)=[\omega(\alpha,\beta),\kappa]_{\K}.
\end{equation}
Moreover, $\omega$ is $\D$ closed in the sense that $\partial_H\omega=0$, which writes:
\begin{equation}\label{closedcurvatureform} \oint_{\alpha,\beta,\gamma} \D_{\alpha}\ \omega(\beta,\gamma)-\omega([\alpha,\beta]_{A_B},\gamma), \quad
(\alpha,\beta,\gamma\in\Gamma(A_B)).
\end{equation}
It is a consequence of Jacobi identity (\ref{Jacobi}).

In order not to confuse $\Curv_{\D}$ with $\omega$ we shall refer to $\omega$ as the \emph{curvature
$2$-form}, and to $\Curv_{\D}$ as the \emph{curvature}.

\begin{rem} The following fact will play an important role in this work: in general, $\D$ might fail to induce an infinitesimal action of $A_B$ on $E$. However, it is clear in \eqref{curvatures} that this failure is still controlled by $\K$. In other words, if we think of $\K$ as a defining a homotopy theory on $E$, one can see an extension of $A_B$ by $\K$ as an infinitesimal action up to homotopy. Note that this terminology corresponds to a different notion than the one of ``representations up to homotopy'' appearing for instance in \cite{CrFe3}.
\end{rem}

\subsection{Parallel transport}\label{parallel}

Given a connection, parallel transport along a small $A_B$-path is well defined, at least locally
as we briefly explain now.

Fix a time-dependent section $\alpha^t\in \Gamma(A_B)$. We denote $X_{\D_\alpha}^t$ the
 linear vector field on $\K$ associated to $\D_{\alpha^t}$ and $\phi^{\D_\alpha}_{t,s}$ its flow. It is \emph{\`a priori} only defined in a neighborhood of $\kappa$ in $\K$ and for $t$ close to $s$.

 Now, since the application $\alpha\mapsto \D_\alpha$ is
$C^\infty(B)$-linear, we see that the restriction  of $X_{D_\alpha}^t$ to a fiber $\K_{|E_b}$ over some $b\in B$ only
depends on $\alpha^t(b)$. Moreover, by definition $\D_\alpha^t=\ad^{A_E}_{h(\alpha^t)}$ so
it has symbol $\sharp_E(h(\alpha^t))$ which we know from Lemma \ref{projectableanchor}
to $p_*$-project onto $\sharp_B(\alpha^t)$. This implies that the restriction of
$\phi_{t,0}^{\D_\alpha}$ to $\K_{|E_b}$ only depends on $\alpha^{t'}$ evaluated at the point ${\phi_{t',0}^{\sharp_B\alpha}(b)}$ for  $t'$ in $[0,t]$.

Thus, if $a:I\to A_B$ is a $A_B$-path over $\gamma:I\to B$, one can extend it arbitrarily to a
time dependent $\alpha^t$ section of $A_B$ and define
$\Phi^a_{t,0}(\kappa):=\phi_{t,0}^{\D_\alpha}(\kappa)$ where $\kappa\in
\K_{|E_{\gamma(0)}}$. We know that it is well defined for $t$ small enough (depending on
$\kappa$) and independent of the choice of $\alpha^t$.

\begin{defn}
An Ehresmann connection is said to be \textbf{complete} if, for any $A_B$-path $a$, $\Phi_{t,0}^{a}(\kappa)$ is
well defined for any $\kappa\in\K_{|E_{\gamma(0)}}$ and all $t\in I$.
\end{defn}
For a complete connection, we obtain at time $t=1$ an application:
$$\xymatrix{
\K_{|E_{\gamma(0)}} \ar[r]^{\Phi_a}  & \K_{|E_{\gamma(1)}}.
}$$
called parallel transport or \textbf{holonomy} along $a$. Clearly $\Phi_a$ is a Lie algebroid
morphism. We shall denote $\phi_a:E_{\gamma(0)}\to E_{\gamma(1)}$ the base map covered by $\Phi^a$ 


\subsection{Admissible couples}
Once fixed a connection, one can decompose any section $\eta$ of $A_E$ as a sum of
elements of the form $\kappa+f h(\alpha)$ with $\kappa\in \Gamma(\K), \alpha\in\Gamma(A_B)$ and
$f\in C^\infty(E)$ so we get an isomorphism: 
$$\Gamma(A_E)=\Gamma(\K)\oplus\ C^\infty(E)\!\otimes\!
\Gamma(A_B).$$ The structure of Lie algebroid on $A_E$ is thus entirely determined by the ones on
$\K$ and $A_B$, and by the couple $(\D,\omega)$. Indeed, the brackets are given by:
\begin{eqnarray}
\label{splitbrackets1}\text{[}\kappa_1,\kappa_2\text{]}_{A_E}&=&\text{[}\kappa_1,\kappa_2\text{]}_\K,\\
\label{splitbrackets2}\text{[}h(\alpha),\kappa\text{]}_{A_E}&=&\D_\alpha\kappa,\\
\label{splitbrackets3}\text{[}h(\alpha),h(\beta)\text{]}_{A_E}&=&h(\text{[}\alpha,\beta\text{]}_{A_B})+\omega(\alpha,\beta),
\end{eqnarray}
and extended to arbitrary sections using Leibniz rule. The anchor is given by the following formulas:
$$\begin{array}{ccc}
\sharp_{A_E}(h(\alpha))&=&s_{\D_\alpha},\\
\sharp_{A_E}(\kappa)&=&\sharp_\K(\kappa).
\end{array}$$

Reciprocally, once fixed a submersion $p:E\to B$, a Lie algebroid $\K$ over $E$ whose characteristic
foliation lies in $\Ver$ and a Lie algebroid $A_B$ over $B$, one shall consider couples $(\D,\omega)$
where:
\begin{itemize}
\item $\D:\Gamma(A_B)\to \Der(\K)$ is a $C^\infty(B)$-linear application such that the symbol $s_{\D_\alpha}$ of $\D_{\alpha}$ $p_*$-projects onto $\sharp_B(\alpha)$ for any $\alpha\in\Gamma(A_B)$;
\item $\omega$ is a $2$-form on $A_B$ with values in sections of $\K$: $\omega\in\Omega^2(A_B)\otimes\Gamma(\K)$.
\end{itemize}
\begin{defn} The couple $(\D,\omega)$ is said to be admissible if  $\partial_H\omega=0$ and $\Curv_\D=\ad^\K\circ\,\omega$.
\end{defn}

Under the assumption of admissibility, the formulas above define a Lie algebroid structure on
$A_E:=\K\oplus p^*A_B$ coming with the obvious Ehresmann connection. Note however that given
arbitrary Lie algebroids $A_B$ and $\K$, there might not exist an extension of $A_B$ by $\K$.

A natural question is to ask how the choice of the connection influences our construction. Given
two connections $h$ and $h'$, one can form the difference $\Delta:=h-h'\in \Omega(A_B)\otimes
\Gamma(\K)$ and one easily gets the following relations:
\begin{eqnarray}\label{extensionequivalence}
\D'_\alpha\kappa&=&\D_\alpha\kappa+[\Delta(\alpha),\kappa]_{\K},\\
 \omega'(\alpha,\beta)&=&\omega(\alpha,\beta)+\partial_H\Delta(\alpha,\beta)+[\Delta(\alpha),\Delta(\beta)]_{\K}.
\end{eqnarray}
where of course $(\D,\omega)$ and $ (\D',\omega')$ denote the compatible couples respectively
associated to $h$ and $h'$. Thus one can state the following.

\begin{thm}\label{classification}
Let $p:E\to B$ be a submersion, $\K\to E$ a Lie algebroid  whose characteristic foliation lies in $\Ver$ and  $A_B\to B$ a Lie algebroid.

Extensions of $A_B$ by $\K$ covering $p$ are (up to equivalence) classified by equivalence classes of admissible couples $(\D,\omega)$ under the relation $(\D,\omega)\sim(\D',\omega')$ if there exists $\Delta\in\Omega^1(A_B)\otimes \Gamma(\K)$ such that
$$\begin{array}{ccl}
\D'&=&\D+\ad^\K\circ{\Delta},\\
 \omega'&=&\omega+\partial_H\Delta +[\Delta\wedge\Delta]_\K,
\end{array}$$
where $[\Delta\wedge\Delta]_\K\in\Omega^2(A_B)\otimes\Gamma(\K)$ is defined by: $$[\Delta\wedge\Delta]_\K(\alpha,\beta)=[\Delta(\alpha),\Delta(\beta)]_\K.$$
\end{thm}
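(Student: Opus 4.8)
The proof consists in showing that the correspondence between extensions-equipped-with-a-connection and admissible couples, already set up above, descends to a bijection on equivalence classes. Recall that the two directions are in place: an admissible couple $(\D,\omega)$ produces the extension $A_E=\K\oplus p^*A_B$ with brackets \eqref{splitbrackets1}--\eqref{splitbrackets3}, together with its obvious connection $h_0:\alpha\mapsto(0,\alpha)$; conversely, a choice of Ehresmann connection on an extension produces an admissible couple via \eqref{curvatures} and \eqref{closedcurvatureform}. I would package these into two maps $G$ and $F$ between the set of isomorphism classes of extensions of $A_B$ by $\K$ covering $p$ and the set of equivalence classes of admissible couples, and then check that $F$ and $G$ are well defined and mutually inverse.

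Well-definedness of $F$ requires two things. First, for a fixed extension the class of the associated couple must not depend on the connection: this is exactly the content of the relations \eqref{extensionequivalence}, which say that if $h,h'$ are two connections with $\Delta:=h-h'$, then the resulting couples are related precisely by the equivalence relation of the statement. Second, isomorphic extensions must give the same class: if $\Psi:A_E\xrightarrow{\sim}A_E'$ is an isomorphism of extensions, i.e. a Lie algebroid isomorphism over $\mathrm{id}_E$ that restricts to the identity on $\K$ and projects to the identity on $A_B$, then for any horizontal distribution $H\subset A_E$ the image $\Psi(H)$ is again complementary to $\K$, hence a connection on $A_E'$; since $\Psi$ intertwines all the structure and is the identity on $\K$, the couple extracted from $(A_E',\Psi(H))$ coincides with the one extracted from $(A_E,H)$. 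Combined with connection-independence, this shows $F$ is well defined.

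For the remaining points the key observation is that the transformed couple can always be realized by a genuine change of connection. Given an admissible couple $(\D,\omega)$ and an arbitrary $\Delta\in\Omega^1(A_B)\otimes\Gamma(\K)$, form the extension $A_E=\K\oplus p^*A_B$; then $h_0-\Delta$ is another Ehresmann connection on the \emph{same} Lie algebroid $A_E$, and by \eqref{extensionequivalence} the couple it induces is exactly $(\D',\omega')$. In particular $(\D',\omega')$ is automatically admissible, being associated to a genuine connection, so no separate verification of the compatibility conditions $\partial_H\omega'=0$ and $\Curv_{\D'}=\ad^\K\circ\,\omega'$ is needed; and its coordinate extension is nothing but $A_E$ read in the new splitting, hence isomorphic to $A_E$ via the transition map $\kappa+\alpha\mapsto\bigl(\kappa+\Delta(\alpha)\bigr)+\alpha$. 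This proves that $G$ is well defined and that the relation on couples is genuinely an equivalence relation (reflexivity is $\Delta=0$, while symmetry and transitivity follow from composing these transition maps). That $F$ and $G$ are mutually inverse is then tautological: $F\circ G=\mathrm{id}$ because extracting the couple from $(A_E,h_0)$ returns $(\D,\omega)$ by \eqref{splitbrackets1}--\eqref{splitbrackets3}, and $G\circ F=\mathrm{id}$ because, for an extension equipped with a connection $h$, the map $\kappa+\alpha\mapsto\kappa+h(\alpha)$ is by those very formulas a Lie algebroid isomorphism from the coordinate extension $\K\oplus p^*A_B$ back to $A_E$.

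The one genuinely non-formal ingredient is the verification that a shift of the horizontal distribution by $\Delta$ transforms the structure functions exactly according to the stated formulas $\D'=\D+\ad^\K\circ\,\Delta$ and $\omega'=\omega+\partial_H\Delta+[\Delta\wedge\Delta]_\K$; this is the computation recorded in \eqref{extensionequivalence}, and I expect it to be the main technical obstacle. Once it is granted, everything above is bookkeeping, and the only care needed is with the direction of the difference $h-h'$ and the resulting signs, which must be tracked consistently so that the transition maps compose correctly.
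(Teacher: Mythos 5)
Your proposal is correct and takes essentially the same route as the paper: the paper derives the theorem directly from the splitting formulas \eqref{splitbrackets1}--\eqref{splitbrackets3}, the admissibility conditions \eqref{curvatures} and \eqref{closedcurvatureform}, and the change-of-connection relations \eqref{extensionequivalence}, leaving implicit exactly the bookkeeping (well-definedness on isomorphism classes, realizability of the transformed couple by a shifted connection, mutual inverseness) that you spell out. The only delicate point is the one you flag yourself, namely the sign convention $\Delta=h-h'$ and the resulting direction of the transition map, and your treatment of it is consistent.
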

\begin{rem}
 As for usual considerations concerning extensions, by equivalence of extensions, we mean an algebroid isomorphism $A_E\to A'_E$ such that the following diagram commutes:
$$\xymatrix{\K\ \ar@{^{(}->}[r]\ar[d]^{Id_\K}   & A_E\ar@{->>}[r]^{\pi}\ar[d]    &A_B\ar[d]^{Id_{A_B}}\\
            \K\ \ar@{^{(}->}[r]                 & A'_E\ar@{->>}[r]^{\pi'}  &A_B.}
$$
\end{rem}
In a practical situation, this theorem might be of no real help for classification. However, it gives a good intuition on how Lie algebroid extensions are made.

\subsection{Isotropies}
For a general extension $\K\hookrightarrow A_E \twoheadrightarrow A_B$ there is no simple relation
between the isotropy Lie algebras $\g^B_{p(x)},$ $\g^E_x$ and $\g^\K_x$ for any $x\in E$. In
particular, these \emph{do not} fit into an extension of Lie algebras in general, as shows example
\ref{infactions}. In the next proposition we summarize a few general properties.

\begin{prop}\label{isotropies}
 Let $\K\hookrightarrow A_E \twoheadrightarrow A_B$ be a Lie algebroid extension. Then the following properties hold:
\begin{enumerate}[i{)}]
\item    $\pi(\g^E_x)   \subset \g^B_{y}$;
\item   $\g^\K_x              =    \g^E_x\cap \K_x$;
\item    $ \pi^{-1}_x(\g^B_y)    =    (\sharp_E)^{-1}_x(\Ver_x)$;
  \end{enumerate}
for any $x\in E$, $y=p(x)$.
\end{prop}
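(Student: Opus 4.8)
The plan is to reduce all three items to a single fiberwise identity relating the anchors, namely
$$p_*\circ\sharp_E=\sharp_B\circ\pi\qquad(\star)$$
read as an equality of linear maps $(A_E)_x\to T_{p(x)}B$ at every $x\in E$. This is essentially the content of Lemma \ref{projectableanchor}, the only point being that there it is phrased for $\pi$-projectable sections. Inspecting that proof, however, the equality $\langle\d f,p_*(\sharp_E\mu)\rangle=\langle\d f,\sharp_B(\pi(\mu))\rangle$ holds in $C^\infty(E)$ for \emph{every} section $\mu$ and every $f\in C^\infty(B)$, before projectability is used; the projectability hypothesis only serves to upgrade $\sharp_E\mu$ to a $p_*$-projectable vector field, which we do not need. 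Since $f$ is arbitrary and every vector of $(A_E)_x$ is the value of some section, this yields $(\star)$ pointwise. So the first step is to record $(\star)$ as an unconditional consequence of $\pi$ being a Lie algebroid morphism over $p$.

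Granting $(\star)$, parts (i) and (iii) are formal. For (i), if $v\in\g^E_x=\ker(\sharp_E)_x$ then $\sharp_B(\pi(v))=p_*(\sharp_E(v))=0$, whence $\pi(v)\in\ker(\sharp_B)_y=\g^B_y$. For (iii), given $v\in(A_E)_x$ one has the chain of equivalences: $\pi(v)\in\g^B_y$ iff $\sharp_B(\pi(v))=0$ iff, by $(\star)$, $p_*(\sharp_E(v))=0$ iff $\sharp_E(v)\in\ker(p_*)_x=\Ver_x$ iff $v\in(\sharp_E)^{-1}_x(\Ver_x)$. Reading this as an identity of subsets of $(A_E)_x$ gives $\pi^{-1}_x(\g^B_y)=(\sharp_E)^{-1}_x(\Ver_x)$.

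For part (ii) I would invoke Proposition \ref{kernel}: $\K$ is a Lie subalgebroid of $A_E$, and the inclusion $\K\hookrightarrow A_E$ is a Lie algebroid morphism over $\mathrm{id}_E$, so applying $(\star)$ with $p=\mathrm{id}_E$ shows that the anchor of $\K$ is nothing but the restriction $\sharp_\K=\sharp_E|_\K$. Hence
$$\g^\K_x=\ker(\sharp_\K)_x=\{v\in\K_x:\sharp_E(v)=0\}=\K_x\cap\ker(\sharp_E)_x=\g^E_x\cap\K_x,$$
which is the assertion (and matches the identification already recorded when the isotropies were introduced). The only genuinely delicate step, and the one I would write out most carefully, is the passage from the projectable-section statement of Lemma \ref{projectableanchor} to the unconditional pointwise identity $(\star)$; everything after that is bookkeeping. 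As a consistency check one verifies that both sides of (iii) contain $\K_x$: on the left $\K_x=\ker\pi_x$, and on the right $\sharp_E(\K_x)=\sharp_\K(\K_x)\subset\Ver_x$ because the characteristic foliation of $\K$ lies in $\Ver$ by Proposition \ref{kernel}.
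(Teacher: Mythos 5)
Your proof is correct, and in fact the paper states Proposition \ref{isotropies} with no proof at all (it is offered as a summary of easy properties), so your argument supplies exactly the justification the author leaves implicit: the pointwise identity $p_*\circ\sharp_E=\sharp_B\circ\pi$ is what the computation inside Lemma \ref{projectableanchor} establishes before the projectability hypothesis is invoked, and (i)--(iii) follow formally from it. Note also that item (ii) is essentially built into the paper's notation, which already records $\g^\K_x:=\ker(\sharp_\K)_x=\g^E_x\cap\K$ when the isotropies are introduced, so your verification via $\sharp_\K=\sharp_E|_\K$ is just making that identification explicit.
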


\begin{defn}\label{cleanextensions} We will say that an extension is \textbf{clean} provided the restriction of $\pi$ to $\g^E_x$ surjects onto $\g^B_{p(x)}$ for any $x\in E$.
\end{defn}
As easily seen, in the case of a clean extension the restriction of $\pi$ to $\g^E_x$
induces an extension of Lie algebras:
$$\g^\K_x\hookrightarrow \g^E_x \twoheadrightarrow \g^B_{p(x)}.$$

Moreover, clean extensions enjoy the following property: each  orbit $\O_E$ of $A_E$ fibers over an
orbit $\O_B$ of $A_B$, the typical fiber having as connected components orbits of $\K$ (this
follows from $(i)$ and $(iii)$ in Proposition \ref{isotropies}).

Once fixed an orbit $\O_E$ of $A_E$, one can always choose a Ehresmann
connection such that $h(\g^B_{y})\subset \g^E_x$ for any $y\in B, x\in p^{-1}(y)\cap\O_E$. Note
however that this might not be possible for all $e\in E$ in general.

For such a connection, the restriction of the curvature $2$-form $\omega$ to sections of the isotropy Lie algebra bundle $\g^B_{|\O_B}$ takes values in $\g^\K_{|\O_E}$.


\subsection{Semi-direct products}
\begin{defn} An Lie algebroid extension will be said of \textbf{semi-direct product type} provided it admits a connection whose curvature $2$-form vanishes: $\omega=0$. In particular $\D$ has no curvature as well by \eqref{curvatures}. Once fixed such a connection, we shall write $A_E=A_B\ltimes \K$.
\end{defn}
Since the curvature of $\D$ vanishes, the holonomy along an $A_B$-path depends only on its
$A_B$-homotopy class. This follows from the fact that $\D:\Gamma(A_B)\to\Der(\K)$ is a Lie
algebra morphism and can be proved using arguments similar to those in the proof of proposition
\ref{hgeom} (see also equation \eqref{hflows}).

\subsection{Abelian Extensions}\label{abelian}
\begin{defn} An \textbf{abelian extension} is an extension whose kernel $\K$ is a bundle of abelian Lie algebras over $E$.
\end{defn}
 If one fixes a connection, because of condition \eqref{curvatures}, we have an action of $A_B$ on $E\to B$, plus a representation of the associated action algebroid $A_B\ltimes E$ on $\K$. Note however that $A_B\ltimes \K$ and
$A_E$ may differ in general since the curvature $2$-form $\omega$ might not vanish. In practice,
this means that one gets the bracket on $A_B\ltimes \K$ by the one on $A_E$ by simply deleting the
term involving $\omega$ in \eqref{splitbrackets3}.

In the case $E=B$, then $\K$ is a representation of $A_B$; the curvature $2$-form is
submitted to the only condition (\ref{closedcurvatureform}) which exactly means that it is a closed
form on $A_B$ with values in the representation $\K$. In the Theorem \ref{classification} the
bracket $[\ ,\ ]_{\K}$ vanishes, so it is easily seen that only the cohomology class of $\omega$
determines the extension up to equivalence. In particular, it vanishes if and only if the
extension is of semi-direct product type.

\begin{defn} A \textbf{central extension} is an abelian extension with $E=B$, and $\K$ the trivial bundle of abelian Lie algebras over $B$, $\K=\mathbb{R}^n\times B$ on which $A_B$ acts trivially.
\end{defn}

To be more precise, the condition that $A_B$ acts trivially means that $\D_\alpha
\kappa=\Lie_{\sharp_B(\alpha)}\kappa$ where we see a section of $\K$ as an application $\kappa:B\to
\mathbb{R}^n$. 

The only condition on the curvature form $\omega$ in order to define an extension is to be
$\d_A$-closed. Therefore central extensions on $A_B$ are in one to one correspondence with
closed $2$-form with values in $\mathbb{R}^n$. 

We shall refer to \cite{CZ} for a nice account of such constructions in the context of Poisson and Jacobi manifolds.

\subsection{Examples}\label{examples}
\begin{ex}\emph{Extensions of Lie algebras.}
When both $E$ and $B$ are reduced to a point, $E=B=\{*\}$, both $A_E$ and $A_B$ are Lie algebras,
and we have an extension of Lie algebras. In that case, the notions of abelian and central
extensions coincide with the usual ones.
\end{ex}
\begin{ex}\emph{Submersions.}\label{ex:submersions}
A submersion $p:E\to B$ realizes $TE$ as an extension of $TB$ by the vertical bundle:
$\Ver\hookrightarrow TE \twoheadrightarrow TB$.
\end{ex}

\begin{ex}\emph{Infinitesimal Actions.}\label{infactions}
Consider an infinitesimal action of a Lie algebra $\g$ on a smooth manifold $M$ that is a Lie algebra morphism $\g\to \X(M)$; we denote $\g\ltimes M$ the corresponding action algebroid. Then one gets an extension by taking $E:=M$, $A_E:=\g\ltimes M$ and $A_B=\g$ seen as an algebroid over a point $B=\{*\}$. In that case, the kernel is trivial ${\K}=E\times\{0\}$ and there is a canonically defined Ehresmann connection. In fact one can \emph{define} an infinitesimal action as an extension of a Lie algebra with trivial kernel.

More generally, given an algebroid $A_B\to B$ and an action of $A_B$ on a fibration $E\to B$, the associated action algebroid  $A_B\ltimes E\to E$ is an extension of $A_B$ with vanishing kernel: $\K=E\times\{0\}$. Reciprocally, an extension of an algebroid $A_B$ with vanishing kernel $\K$ is naturally associated to an action of $A_B$ on $E$.

In this situation, one can also form the semi-direct product $A_B \ltimes \Ver\to E$, which is an extension of $A_B$ by $\Ver:=\ker p_*\subset TE$. See \cite{MM} for more details.
\end{ex}

\begin{ex}\emph{Pull-back Lie algebroids.}
Given a submersion $E\to B$ and an algebroid $A_B\to B$ over $B$, the pull-back algebroid $p!A_B$ is defined by:
$$\pi!A_B:=\bigl\{(X,\alpha)\in TE\times A_B/p_*X=\sharp_B\alpha\bigr\}.$$
Clearly, the projection on the second factor $\pi:p!A_B\to A_B$ is a surjective algebroid morphism 
with kernel $\K=\Ver$, where $\Ver\subset TE$ denotes the vertical bundle: $\Ver:=\ker p_*$. A
 connection amounts to a partially defined connection on $\im\sharp_B$, one can for
instance choose a usual Ehresman connection on $E\to B$ and compose it with $\sharp_B$.
\end{ex}

\begin{ex}\emph{Fibered Lie algebroids.}
 In the case of an extension $\K\hookrightarrow A_E \twoheadrightarrow A_B$ where $A_B=\{0\}\times B$, $A_E$ coincides with $\K$ which stands as a fibered Lie algebroid over $p:E\to B$.
\end{ex}

\begin{ex}\emph{Extension by bundles of Lie algebras.}
In the case of an extension $ \K\hookrightarrow A_E \twoheadrightarrow A_B$ where $E$ and $B$ do
coincide: $E=B$, it is easily seen that $\K$ has vanishing anchor so it is a bundle of Lie
algebras.
\end{ex}

\begin{ex}\emph{The Atiyah exact sequence.}\label{ex:Atiyah}
A transitive Lie algebroid $A\to B$ naturally fits into an extension of $A_B=TB$. Here, $B=E$ and the projection $\pi$ is given by the anchor $\pi=\sharp$. Then $\K:=\ker \sharp$ is a bundle of Lie algebras. An Ehresmann connection is then just a section of the anchor, and the corresponding curvature $2$-form is an element $\omega\in\Omega^2(B)\otimes\Gamma(\ker\sharp)$.

A regular Lie algebroid $A\to B$ with foliation $F\subset TB$ can be treated similarly as an extension of $F$ by a bundle of Lie algebras.
\end{ex}

\begin{ex}\emph{Neighborhood of an orbit.}\label{neighborhood}
The following is a straightforward adaptation of the construction in \cite{Vor}.

Consider an algebroid $A_E\to E$, and let $B$ denote one of its orbits. Assume $B$ admits a
tubular neighborhood so that, up to a shrinking of $E$, one is given a smooth submersion $p:E\to
B$, as usual we denote $\Ver:=\ker \ p_*$.

Since along $B$ we have $T_B E=\Ver_{|B}\oplus \im \sharp_E$, the following still holds in a whole neighborhood of $B$: $TE=\Ver+\im\ \sharp_E$. Thus, shrinking $E$ if necessary, one obtains one obtains a surjective
bundle map $A_E\to TB$ by compositing $p_*$ with $\sharp_E$. It is clearly an algebroid morphism
since both $p_*$ and $\sharp_E$ are. In that case, $\K$ is just the transverse Lie algebroid for each
fiber of the projection.
\end{ex}

\begin{ex}\emph{Extensions of tangent bundles}
Here is a rather general procedure to obtain an extension of a tangent bundle $A_B=TB$, directly inspired by the so-called Yang-Mills-Higgs settings for coupling Dirac structures, see for instance \cite{Wa}, \cite{BrFe} and references therein.

Consider a $G$-principal bundle $P\to B$, and an inner action of a group $G$ on an algebroid $A_F\to F$. By definition, this means that we are given an action $\rho:G\to \Aut(A_F)$ of $G$ on $A_F$  by Lie algebroid automorphisms, and a $G$-equivariant algebroid morphism:
$$\psi:\g\ltimes F\to A_F,$$
such that the derivation $\ad_{\psi(\xi)}^{A_F}$ has infinitesimal generator $\frac{d}{dt}\rho_{exp\ t\xi}$.

Then we can build an extension of $TB$ as follows: fix a principal connection $\theta$ on $P$, then we can use parallel transport to obtain a derivation of the fibered Lie algebroid $\K:=P\times_G A_F$:
$$\D:\X(B)\to \Der{(\K)}.$$
Define now $\omega\in\Omega^2(B)\otimes\Gamma(\K)$ by the following formula:
$$\omega(X,Y):=(P\times_G\psi)\bigl(\Curv_\theta(X,Y)\bigr), $$
where $\Curv_\theta:\Omega^2(B)\otimes \Gamma( P\ltimes_G \g)$ denotes the curvature of $\theta$ as a principal connection, and $P\ltimes_G\psi$ denotes the fibered map:
$$P\times_G\psi:P\times_G\g\to P\times_G A_F=\K.$$

It is easily checked that $(\D,\omega)$ induces an extension of $TB$ by $\K$ whose isomorphism
class is independent of the choice of $\theta$.

In such a situation, it is possible to work out ``by hands'' a concrete description of the integrating groupoid, involving only $\G(A_F)$ and bundles associated to $P$. This will be explained in a separate work \cite{BrFe2}
\end{ex}

\section{Cohomology}\label{Cohomology}
In this section, we describe a spectral sequence converging to the cohomology of an
extension. By choosing a connection, we obtain formulas for the successive coboundary operators, which we believe makes it easier to understand. A short example is then discussed.

\subsection{The spectral sequence.}
Given an extension $\K\hookrightarrow A_E \twoheadrightarrow A_B$, there is a natural filtration of the complex of cochains $C^k:=\Omega^k(A_E)$:
$$C^k=:F_0 C^{k}\supset \dots\supset F_qC^{k}\supset F_{q+1}C^{k}\supset\dots\supset F_{k+1}C^{k}:=\{0\},$$
  where the generic generic term is defined as follows:
$$F_q C^k:=\bigl\{\alpha\in\Omega^k(A_E), i_v\alpha=0\ \forall v\in\Gamma(\Lambda^{k-q+1}\K)\bigr\}.$$
Since this filtration is bounded, one gets a spectral sequence converging to $H^\bullet(A_E)$ by
the usual construction. The only thing one actually has to check is that $\d_{A_E}(F_{q}C^k)\subset
F_q C^{k+1}$ which we will leave to the reader (it also can be seen as a consequence of next
proposition).
\begin{rem} We emphasize the change of notations here: up to now, we used Greek letters to denote sections of the different algebroids and Latin ones for vector fields. In this section, Greek letters will be used to denote $k$-forms on Lie algebroids while latine capitals will be used to refer to multi-section of algebroids, as we think of them as generalized multi-vector fields.
\end{rem}

Let us choose an Ehresmann connection (fixed once for all) and recall the identification $\Gamma(A_E)=C^\infty(E)\otimes\Gamma(A_B)\oplus\Gamma(\K)$ induced by the splitting $A_E=\K\oplus H$. Dually one has $\Omega(A_E)=\Omega(A_B)\!\otimes\! C^\infty(E) \oplus \Omega(\K)$. Extending this principle to multi-forms on $A_E$, one gets an isomorphism:
\begin{equation}\label{ident}\Omega^k (A_E)=\bigoplus_{p+q=k} \Omega^p(A_B)\otimes \Omega^q(\K),
\end{equation}
where the projection $\theta^{p,q}$ of some $\Theta\in \Omega^k(A_E)$ on $\Omega^p(A_B)\otimes \Omega^q(\K)$ is given by the following formula:
\begin{equation}\label{splitkforms}\big\langle\theta^{p,q}(v_1\dots v_p),w_1\dots w_q\big\rangle=\big\langle\Theta,h(v_1)\dots h( v_p),w_1\dots w_q\big\rangle,
\end{equation}
where $v_i\in\gamma(A_B),w_i\in\Gamma(\K)$. The reason why this identification is particularly suiting for our purposes is that the filtration $F_q C^k$ comes as successive truncations:
$$F_q C^{k}=\bigoplus_{q\leq p\leq k}\Omega^p(A_B)\otimes \Omega^{k-p}(\K).$$
In particular we have $E_0^{p,q}:=F_q C^{p+q}/F_{q+1} C^{p+q}=\Omega^p(A_B)\otimes \Omega^q(\K)$. In next proposition, we decompose the co-differential operator $d_{A_E}$ under the identification (\ref{ident}), this gives explicit formulas for the successive coboundary operators involved in the spectral sequence.

\begin{prop}\label{splitdifferential}
Let $\K\hookrightarrow A_E \twoheadrightarrow A_B$ be an extension and $A_E=\K\oplus H$ a fixed
Ehresmann connection. Then, under the identification (\ref{ident}) the coboundary
operator  $\d_{A_E}$ on $\Omega^\bullet (A_E^*)$ decomposes as a sum
$\d_{A_E}=\delta_{0,1}+\delta_{1,0}+\delta_{2,-1}$ where $\delta_{i,j}$ have bi-degree $(i,j)$. The restrictions of these operators to $\Omega^p(A_B)\otimes\Omega^q(\K)$ are given by the following formulas:
$$(\delta_{0,1}\theta)(v_1\dots v_p)=(-1)^p \d_{\K}(\theta(v_1\dots v_p)),$$
which is a coboundary operator naturally extending $d_{\K}$ to $\Omega^p(A_B)\otimes\Omega^q(\K)$;
\begin{multline}
(\delta_{1,0}\theta)(v_0\dots v_p)
=\sum_{i=0\dots p}(-1)^{i}\D_{v_i}(\theta(v_1\dots\widehat{v_i}\dots v_p))\\
+\sum_{0\leq i<j\leq n}(-1)^{i+j}\theta([v_i,v_j]_{A_B},v_1\dots\widehat{v_i}\dots\widehat{v_j}\dots v_p),
  \end{multline}
which is the covariant derivative $\partial_H$ associated to the Ehresmann connection as described in
Section \ref{Ehresmann};
$$(\delta_{2,-1}\theta)(v_0\dots v_{p+1})=(-1)^{p+1}\sum_{0\leq i<j\leq p+1}(-1)^{i+j} i_{\omega(v_i,v_j)}\theta(v_0\dots \widehat{v_i}\dots\widehat{v_j}\dots v_{p+1}),$$
where $\omega\in\Omega^2(A_B)\otimes\Gamma(\K)$ is the curvature $2$-form of to the connection.
\end{prop}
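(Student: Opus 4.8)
The plan is to verify the claimed decomposition $\d_{A_E} = \delta_{0,1} + \delta_{1,0} + \delta_{2,-1}$ by direct computation, feeding the standard Lie algebroid differential formula for $\d_{A_E}$ through the splitting \eqref{ident}, and then sorting the resulting terms by bi-degree. First I would fix $\Theta \in \Omega^p(A_B)\otimes\Omega^q(\K)$, regarded as an element of $\Omega^{p+q}(A_E)$ via \eqref{splitkforms}, and evaluate $\d_{A_E}\Theta$ on a tuple of sections of $A_E$. By $C^\infty(E)$-multilinearity it suffices to evaluate on arguments of the two pure types: horizontal lifts $h(v_i)$ with $v_i\in\Gamma(A_B)$, and kernel sections $w_j\in\Gamma(\K)$. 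So the core of the proof is to compute $\langle \d_{A_E}\Theta,\, h(v_0)\dots h(v_a), w_1\dots w_b\rangle$ for each admissible pair $(a,b)$ with $a+b = p+q+1$, and to read off which bi-degree component of the output it contributes to.

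\emph{The decisive input is formulas \eqref{splitbrackets1}--\eqref{splitbrackets3}}, which express every bracket in $A_E$ in terms of the data on $\K$, the derivation $\D$, and the curvature $2$-form $\omega$. The argument is a bookkeeping of the two sums in the differential formula. In the first (Lie-derivative) sum, applying $\sharp_{A_E}$ to an argument gives either $s_{\D_{v_i}}$ on a horizontal lift or $\sharp_\K(w_j)$ on a kernel section; the latter feeds the $\d_\K$ part and the former feeds the $\D$-part of the covariant derivative. In the second (bracket) sum, the three cases $[w_i,w_j]_\K$, $[h(v_i),w_j]_{A_E}=\D_{v_i}w_j$, and $[h(v_i),h(v_j)]_{A_E}=h([v_i,v_j]_{A_B})+\omega(v_i,v_j)$ are precisely where the trichotomy originates. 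The pure-kernel brackets together with the kernel-part Lie derivatives assemble into $\delta_{0,1}=(-1)^p\d_\K$ (the sign coming from moving the differential past the $p$ horizontal slots). The mixed brackets $\D_{v_i}w_j$ combine with the symbol terms $s_{\D_{v_i}}$ and the $A_B$-bracket terms $h([v_i,v_j]_{A_B})$ to give exactly the covariant derivative $\partial_H=\delta_{1,0}$ of Section~\ref{Ehresmann}. The genuinely new term is the curvature contribution: each bracket $[h(v_i),h(v_j)]_{A_E}$ contributes, beyond $h([v_i,v_j]_{A_B})$, the section $\omega(v_i,v_j)\in\Gamma(\K)$, which lowers the horizontal degree by one and raises the kernel degree, producing a contraction $i_{\omega(v_i,v_j)}$; collecting these with the appropriate $(-1)^{i+j}$ signs and the global $(-1)^{p+1}$ yields $\delta_{2,-1}$.

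To finish, I would check that the three collected operators have exactly the asserted bi-degrees $(0,1)$, $(1,0)$, $(2,-1)$—which is automatic once each output term is matched against the grading of \eqref{ident}—and that no term of any other bi-degree survives. That $\delta_{0,1}=(-1)^p\d_\K$ is itself a coboundary operator follows because $\d_\K^2=0$ on $\Omega^\bullet(\K)$, and that $\delta_{1,0}=\partial_H$ agrees with the covariant derivative is a matter of comparing the displayed formula with the definition of $\partial_H$ given earlier; neither requires fresh computation. A useful consistency check, which I would mention rather than carry out, is that $\d_{A_E}^2=0$ forces the bi-homogeneous relations $\delta_{0,1}^2=0$, $\delta_{0,1}\delta_{1,0}+\delta_{1,0}\delta_{0,1}=0$, $\delta_{1,0}^2+\delta_{0,1}\delta_{2,-1}+\delta_{2,-1}\delta_{0,1}=0$, and so on; the last of these recovers the admissibility condition $\Curv_\D=\ad^\K\circ\,\omega$, while $\delta_{1,0}\delta_{2,-1}+\delta_{2,-1}\delta_{1,0}=0$ recovers $\partial_H\omega=0$, so the proposition is fully consistent with the compatibility equations.

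\emph{The main obstacle} is not conceptual but the careful tracking of signs: the factor $(-1)^p$ in $\delta_{0,1}$ and the global $(-1)^{p+1}$ in $\delta_{2,-1}$ arise from commuting the differential and the contraction past the block of horizontal arguments, and getting these right—together with the relative positions of the omitted arguments $\widehat{v_i},\widehat{v_j}$ in the reindexed sums—is where an error is most likely to creep in. I would handle this by treating the generic pure-type evaluation once and for all, rather than splitting into many sub-cases, and by anchoring the sign conventions against the low-degree cases $q=0$ (where only $\delta_{1,0}$ and $\delta_{2,-1}$ act) and $p=0$ (where $\d_{A_E}$ restricts to $\d_\K$), which pin down the two global signs unambiguously.
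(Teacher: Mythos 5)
Your proposal is correct and takes essentially the same route as the paper's own proof: fix $\theta^{p,q}$, view it as $\Theta\in\Omega^{p+q}(A_E)$ via \eqref{ident}, evaluate $\d_{A_E}\Theta$ on pure-type tuples of horizontal lifts and kernel sections, and sort the resulting terms using the split bracket formulas \eqref{splitbrackets1}--\eqref{splitbrackets3}, with the vanishing of evaluations having too many kernel (resp.\ horizontal) arguments accounting for both the disappearance of spurious terms and the substitution of $[h(v_i),h(v_j)]_{A_E}$ by $h([v_i,v_j]_{A_B})$ or $\omega(v_i,v_j)$ as appropriate. The paper simply carries out the three case computations in full detail, exactly along the lines you sketch.
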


\begin{proof}
Let us fix $\theta^{p,q}\in\Omega^p(A_B)\otimes\Omega^q(\K)$ and denote
$\Theta\in\Omega^{p+q}(A_E)$ the corresponding $p+q$-form on $A_E$ through the isomorphism
(\ref{ident}). By construction, $\Theta$ has the following property: for any $v\in\Gamma(\Lambda^s
A_B)$ and $v\in\Gamma(\Lambda^t A_B)$ with $s+t=p+q$, the evaluation $\langle\Theta,h(v)\wedge
v\rangle$ vanishes whenever $s>p$ or $t>q$.

The different components in $\Omega^{s}(A_B)\otimes\Omega^t(\K)$, where $s+t=p+q+1$, of $d_{A_E}\Theta$ are obtained using  \eqref{splitkforms} so we just have to compute $\langle \d_{A_E}\Theta,v\wedge w\rangle$ for any $v\in\Gamma(\Lambda^s A_B)$, $w\in\Gamma(\Lambda^t \K)$, then relate it to $\theta^{p,q}$.

Let us first express $\delta_{0,1}$. We choose sections $w_0\dots w_q$ of $\K$ and  $v_1\dots v_p$ of $A_B$ and compute:
\begin{multline*}
 \big\langle \d_{A_E}\Theta,  h(v_1)\dots  h(v_p), w_0\dots w_q\big\rangle\\
=
\sum_{i=1\dots p}(-1)^{i+1}\Lie_{ \sharp_E  h(v_i)}
                      \big\langle \Theta,  h(v_1)\dots\widehat{  h(v_i)}\dots  h(v_p),
                          w_0\dots w_q\big\rangle \\
    +\sum_{1\leq i<j\leq p}(-1)^{i+j}
            \big\langle \Theta,[  h(v_i),  h(v_j)]_{A_E},
               h(v_1)\dots\widehat{  h(v_i)}\dots\widehat{  h(v_j)}\dots  h(v_p),
              w_0\dots w_q\big\rangle\\
    +(-1)^{p}\sum_{i=0\dots q}(-1)^{i}\Lie_{ \sharp_E w_i}
           \Theta(  h(v_1)\dots  h(v_p),
              w_0\dots\widehat{ w_i}\dots w_q)\\
    +\sum_{0\leq i<j\leq q}(-1)^{i+j}\Theta([ w_i, w_j]_{A_E},
                h(v_1)\dots  h(v_p),
               w_0\dots\widehat{ w_i}\dots\widehat{ w_j}\dots w_q)\\
   +(-1)^{p+1}\sum_{\substack{i=1\dots p\\j=0\dots q}}(-1)^{i+j}
               \Theta([  h(v_i), w_j]_{A_E},
                     h(v_1)\dots\widehat{h(v_i)}\dots  h(v_p),
                    w_0\dots\widehat{w_j}\dots w_q).
\end{multline*}
Here, the two first terms vanish since $\Theta$ is evaluated on $q+1$ sections of $\K$, as well as the last term according to lemma \ref{projectablebrackets}. In the two remaining terms, it is recognized the coboundary operator for the Lie algebroid $\K$ and one obtains the desired expression for $\delta_{0,1}$. Note that one can also write $\delta_{0,1}$ as $\delta_{0,1}=\sum (-1)^p \text{ id}\otimes \d_\K$.

We now compute $\delta_{1,0}$. For any sections  $v_0\dots v_p$ of $A_B$ and $w_1\dots w_q$ of $\K$, we have:
\begin{multline*}
{ \big\langle \d_{A_E}\Theta,h(v_0)\dots  h(v_p),w_1\dots w_q\big\rangle}\\
    =\sum_{i=0\dots p}(-1)^{i}\Lie_{ \sharp_E  h(v_i)}
                      \big\langle\Theta,  h(v_0)\dots\widehat{h(v_i)}\dots  h(v_p),
                          w_1\dots w_q\big\rangle\\
    +\sum_{0\leq i<j\leq p}(-1)^{i+j}
            \big\langle\Theta,[ h(v_i),h(v_j)]_{A_E},
               h(v_0)\dots\widehat{h(v_i)}\dots\widehat{h(v_j)}\dots h(v_p),
              w_1\dots w_q\big\rangle\\
      +(-1)^{p}\sum_{i=1\dots q}(-1)^{i}\Lie_{\sharp_E w_i}
           \big\langle\Theta,  h(v_0)\dots  h(v_p),
              w_1\dots\widehat{ w_i}\dots w_q\big\rangle\\
    +\sum_{1\leq i<j\dots\leq q}(-1)^{i+j}\big\langle\Theta,[ w_i, w_j]_{A_E},
                h(v_0)\dots  h(v_p),
               w_1\dots\widehat{ w_i}\dots\widehat{ w_j}\dots w_q\big\rangle\\
    +(-1)^{p}\sum_{\substack{i=0\dots p\\j=1\dots q}}(-1)^{i+j}
               \big\langle\Theta,[  h(v_i), w_j]_{A_E},
                     h(v_0)\dots\widehat{h(v_i)}\dots h(v_p),
                    w_1\dots\widehat{w_j}\dots w_q\big\rangle.
\end{multline*}
In this sum, the third and fourth terms vanish since $\Theta$ is evaluated on ${p+1}$ sections of
$H$. Then, taking apart the first and last terms, one gets:
\begin{eqnarray*}
&&\sum_{i=0\dots p}(-1)^{i}\Lie_{ \sharp_E  h(v_i)}
                      \big\langle\Theta,h(v_0)\dots\widehat{h(v_i)}\dots h(v_p),
                          w_1\dots w_q)\\
&&+(-1)^{p}\sum_{\substack{i=0\dots p\\j=1\dots q}}(-1)^{i+j}
               \big\langle\Theta,\D_{v_i} w_j,
                     h(v_0)\dots\widehat{h(v_i)}\dots  h(v_p),
                    w_1\dots\widehat{ w_j}\dots w_q\big\rangle\\
&=&\sum_{i=0\dots p}(-1)^{i}\Lie_{ \sharp_E  h(v_i)}
                      \big\langle\theta^{p,q}(v_0\dots\widehat{v_i}\dots v_p),
                         w_1\dots w_q\big\rangle\\
&&+\sum_{\substack{i=0\dots p\\j=1\dots q}}(-1)^{i+j}
               \big\langle\theta^{p,q}(v_0\dots\widehat{v_i}\dots v_p),
                   \D_{v_i} w_j, w_1\dots\widehat{ w_j}\dots w_q)\big\rangle\\
&=&\sum_{i=0\dots p}(-1)^{i}\big\langle\D_{v_i}
                     \ \theta^{p,q}(v_0\dots\widehat{v_i}\dots v_p),
                         w_1\dots w_q\big\rangle.
\end{eqnarray*}
Considering now the second term, we know that the following expression:
$$\big\langle\Theta,\omega(v_i,v_j),
               h(v_0)\dots\widehat{h(v_i)}\dots\widehat{h(v_j)}\dots h(v_p),
              w_1\dots w_q\big\rangle$$
 vanishes because $\Theta$ is evaluated on $q+1$ section of $\K$. Since $\omega$ precisely measures the difference $\omega(v_i,v_j)=[h(v_i),h(v_j)]_{A_E}-h([v_i,v_j]_{A_B})$ one deduces that
\begin{multline*}
\sum_{0\leq i<j\leq p}(-1)^{i+j}
            \big\langle\Theta,[h(v_i),h(v_j)]_{A_E},
               v_0\dots\widehat{  v_i}\dots\widehat{ v_j}\dots  v_p,
              w_1\dots w_q\big\rangle\\
=\sum_{0\leq i<j\leq p}(-1)^{i+j}
            \big\langle\Theta,h([v_i,v_j]_{A_B}),
               v_0\dots\widehat{  v_i}\dots\widehat{ v_j}\dots  v_p,
              w_1\dots w_q\big\rangle\\
=\sum_{0\leq i<j\leq p}(-1)^{i+j}
            \big\langle\theta^{p,q}([v_i,v_j]_{A_B},v_0\dots\widehat{v_i}\dots\widehat{v_j}\dots v_p),
             w_1\dots w_q\big\rangle.
\end{multline*}
So we obtain the announced formula for $\delta_{1,0}$. It appears to coincide with the covariant
derivative $\partial_H$ associated to the connection.

Let us now express $\delta_{2,-1}$. For any sections $ w_1\dots w_{q-1}$ of $\K$ and $ v_0\dots v_{p+1}$ of $  A_B$ we check that:
\begin{multline*}
\big\langle \d_{A_E}\Theta,  h(v_0)\dots  h(v_{p+1}),w_1\dots w_{q-1}\big\rangle\\
=\sum_{i=0\dots p+1}(-1)^i\Lie_{ \sharp_E  h(v_i)}
                      \big\langle\Theta,  h(v_0)\dots\widehat{h(v_i)}\dots  h(v_{p+1}),
                          w_1\dots w_{q-1}\big\rangle\\
    +\sum_{0\leq i<j \leq p+1}(-1)^{i+j}
            \big\langle\Theta,[h(v_i),h(v_j)]_{A_E},
               h(v_0)\dots\widehat{h(v_i)}\dots\widehat{h(v_j)}\dots h(v_p),
             w_1\dots w_{q-1}\big\rangle\\
    +(-1)^{p+1}\sum_{j=1\dots q}(-1)^{i}\Lie_{ \sharp_E w_i}
           \big\langle\Theta,  h(v_0)\dots  h(v_{p+1}),
             w_1 \dots\widehat{ w_i}\dots w_{q+1})\\
    +\sum_{1\leq i<j \leq q-1}(-1)^{i+j}\Theta([ w_i, w_j]_{A_E},
                h(v_0)\dots h(v_{p+1}),
               w_1\dots\widehat{ w_i}\dots\widehat{ w_j}\dots w_{q-1}\big\rangle\\
    +(-1)^{p+1}\sum_{\substack{i=0\dots p+1\\j=1\dots q-1}}(-1)^{i+j}
               \big\langle\Theta,[h(v_i), w_j]_{A_E},
                     h(v_0)\dots\widehat{h(v_i)}\dots  h(v_{p+1}),
                    w_1\dots\widehat{w_j}\dots w_{q-1}\big\rangle.
\end{multline*}
Here, it is easily seen that only the second term does not vanish. To deal with it, one notices that the expression $$\big\langle\Theta,h([v_i,v_j]_{A_B}),
               h(v_0)\dots\widehat{h(v_i)}\dots\widehat{h(v_j)}\dots h(v_p),
             w_1\dots w_{q-1}\big\rangle$$
 vanishes since $\Theta$ is evaluated on $p+1$ section of $H$, and since $\omega$ precisely measures the difference $\omega(v_i,v_j)=[h(v_i),h(v_j)]_{A_E}-h([v_i,v_j]_{A_B})$ one gets the expression for $\delta_{2,-1}$ stated in the proposition.

To conclude one has to check that $\d_A\Theta$ has no other components than those three mentioned
in this proof, but this should be clear by what has been done so far.\end{proof}

In order to keep notations meaningful, in the sequel, we will write respectively $\partial_\K$,
$\partial_H$, and $\partial_\omega$ rather then $\delta_{0,1}, \delta_{1,0}$ and $\delta_{2,-1}$.
Using the bi-graduation and the fact that $\d_{A_E}\circ \d_{A_E}=0$ one easily gets the following
useful formulas.
\begin{prop}\label{differentialproperties} Under the hypothesis of proposition \ref{splitdifferential} and notations above, the following relations holds:
 \begin{enumerate}[i{)}]
  \item $\partial_\K\circ \partial_\K             \hspace{101pt}=0$,
  \item $ \partial_\K\circ\partial_H+\partial_H\circ \partial_\K\hspace{50pt}=0$,
  \item $ \partial_\K\circ \partial_\omega \hspace{2pt}+\partial_H\circ\partial_H + \partial_\omega \circ \partial_\K\hspace{1pt}=0$,
  \item $\hspace{50pt}\partial_H \circ \partial_\omega \hspace{3pt}+\partial_\omega \circ \partial_H=0 $,
  \item $\hspace{103pt}\partial_\omega\circ \partial_\omega\hspace{2pt}=0$.
 \end{enumerate}
\end{prop}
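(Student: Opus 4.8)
The plan is to derive all five relations in Proposition \ref{differentialproperties} purely formally from the single equation $\d_{A_E}\circ\d_{A_E}=0$ together with the decomposition $\d_{A_E}=\partial_\K+\partial_H+\partial_\omega$ established in Proposition \ref{splitdifferential}, where the three summands carry bidegrees $(0,1)$, $(1,0)$ and $(2,-1)$ respectively. The key observation is that the identification \eqref{ident} makes $\Omega^\bullet(A_E)$ a bigraded vector space $\bigoplus_{p,q}\Omega^p(A_B)\otimes\Omega^q(\K)$, and that a composition $\delta_{i,j}\circ\delta_{i',j'}$ raises bidegree by $(i+i',j+j')$. Since the total differential squares to zero, collecting the terms of $\d_{A_E}\circ\d_{A_E}$ according to their bidegree must give zero in each bidegree separately.

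First I would expand the square:
\begin{equation*}
\d_{A_E}\circ\d_{A_E}=(\partial_\K+\partial_H+\partial_\omega)\circ(\partial_\K+\partial_H+\partial_\omega),
\end{equation*}
which is a sum of nine compositions. Then I would sort these nine terms by their total bidegree. A composition $\delta_{i,j}\circ\delta_{i',j'}$ has bidegree equal to the sum of the two individual bidegrees, so the possible bidegrees are $(0,2)$, $(1,1)$, $(2,0)$, $(3,-1)$ and $(4,-2)$. Specifically: bidegree $(0,2)$ comes only from $\partial_\K\circ\partial_\K$; bidegree $(1,1)$ from $\partial_\K\circ\partial_H+\partial_H\circ\partial_\K$; bidegree $(2,0)$ from $\partial_\K\circ\partial_\omega+\partial_H\circ\partial_H+\partial_\omega\circ\partial_\K$; bidegree $(3,-1)$ from $\partial_H\circ\partial_\omega+\partial_\omega\circ\partial_H$; and bidegree $(4,-2)$ from $\partial_\omega\circ\partial_\omega$. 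Because the target bidegrees are pairwise distinct, the vanishing of the total sum forces each of these five grouped expressions to vanish independently, which is exactly the list of relations (i)--(v).

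The main point to make rigorous is that no two of the nine compositions share a bidegree except as grouped above, so that $\d_{A_E}^2=0$ really does decompose into five independent equations rather than a single entangled one; this is immediate once the bidegrees are tabulated. I would also remark that $\d_{A_E}^2=0$ holds because $A_E$ is a genuine Lie algebroid (admissibility of $(\D,\omega)$ guarantees this), so no separate verification is needed. The only mild subtlety, which I expect to be the main obstacle in presentation rather than in substance, is being careful that the bidegree of each $\delta_{i,j}$ is precisely the one claimed in Proposition \ref{splitdifferential}; once that is granted, the argument is a pure bookkeeping of degrees and requires no further computation with the explicit formulas for $\partial_\K$, $\partial_H$ and $\partial_\omega$.
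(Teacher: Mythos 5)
Your proposal is correct and is essentially the paper's own proof: the paper derives Proposition \ref{differentialproperties} precisely by invoking the bi-graduation together with $\d_{A_E}\circ\d_{A_E}=0$, so that the nine compositions sort by bidegree into the five stated relations. Your write-up merely makes explicit the degree bookkeeping that the paper leaves to the reader.
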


\subsection{Interpretation}
Let us now explain how these results can give some sort of geometric approach to this cohomology. Here, we will carry out the usual construction of a spectral sequence associated to a filtration, using standard notations.

We explained already why:
$$E_0^{p,q}:=F_q^{p,q}\Omega(A_E)^{p+q}/F_{q+1}^{p,q}\Omega^{p+q}(A_E)=\Omega^p(A_B)\otimes \Omega^q(\K),$$
and we have $\d_0=\text{id}\otimes \d_\K$ according to the Proposition \ref{splitdifferential}.

Now the first sheet $E_1^{p,q}$ of the spectral sequence can be thought of as forms on $A_B$ with values in the cohomology $H^*(\K)$ of $\K$ because:
$$E_1^{p,q}:=\Omega^p(A_B)\otimes Z^q(\K) / \Omega^p(A_B)\otimes B^q(\K),$$
where we denoted:
\begin{align*}
Z^q(\K)&:=\bigl\{\mu \in\Omega^q(\K): \d_\K \mu=0\bigr\},\\ 
B^q(\K)&:=\bigl\{\mu\in\Omega^q(\K): \mu=\d_\K\nu\bigr\}.
\end{align*}
 The coboundary operator $\d_1$ on $E_1^{p,q}$ is the quotient
of the map $\partial_H$. Let us see why $\d_1\circ \d_1=0$: an element in $E_1^{p,q}$ is
represented by some $\theta\in \Omega^p(_B)\otimes Z^q(\K)$ thus, using $(iii)$ in proposition
\ref{differentialproperties}, one has:
$$-\partial_H\circ\partial_H\, \theta=\partial_\K\circ \partial_\omega\,\theta+ \partial_\omega \circ \partial_\K\,\theta=\partial_\K\circ \partial_\omega\,  \theta,$$
which indeed represents a trivial element in $E^{p+1,q}_1$.

Now, in order to understand the second sheet $E_2^{p,q}$, one shall think of elements in $H^*(\K)$ as sections of a vector bundle over $B$. This makes sense since $H^*(\K)$ carries  a natural structure of $C^\infty(B)$-module (this because $\d_\K$ is $C^\infty(B)$-linear). From this point of view $\partial_H$ induces a representation of $A_B$ on $H^*(\K)$ by setting $D_X ([\theta]_\K):=[\partial_H \theta (X)]_\K$. The equality $(ii)$ in proposition \ref{differentialproperties} ensures that $D$ it is well defined, and the last computation with $p=0$ that it is indeed a representation. Thus the second sheet $E_2^{p,q}$ is as close as one can get to a cohomology of $A_B$ with values in the representation  $H^*(\K)$.

The last sheet $E_3^{p,q}$ specifically involves $\partial_\omega$,
it will be the crucial step distinguishing the cohomologies of two extensions with same derivative $\D$ but distinct curvatures forms $\omega, \omega'$.

We stress the fact that the spaces we are dealing with here can be very singular. In some examples though, this construction allows to carry the computation of the cohomology through. Of course, this assumes a good knowledge of the cohomologies of $A_B$ and $\K$.

\begin{ex} Applying these results to an action Lie algebroid (see Example \ref{infactions}), we get the well-known fact that the cohomology of $g\ltimes M$  is the same as the Chevalley Eilenberg cohomology of $\g$ with values in the $\g$-module $C^\infty(M)$.
\end{ex}

\begin{ex} In the case of a fibration $E\to B$ (see Example \ref{ex:submersions}), one obtains the usual Leray Serre spectral sequence, involving vertical de Rham cohomology on $E$ and the de Rham cohomology of the base.
\end{ex}
\begin{ex} In the case of Poisson cohomology relative to a symplectic leaf, which is a particular case of cohomology of a transitive Lie algebroid, one obtains the same formula as in \cite{GL}.
\end{ex}

\subsection{An example of computation}
Let us present a simple but non-trivial example where the computation of the cohomology can be carried out.

We consider a Poisson structure on $E:=\RR^+\times S^2$ whose symplectic leaves are $\{r\}\times S^2$ endowed with the symplectic form $f\omega_0$, where  $f$ is a nowhere vanishing function of $r\in \RR^+$ and $\omega_0$ the usual symplectic form on $S^2$. Recall that these settings allow the simplest examples of non-integrable Lie algebroids. The associated Poisson cohomology of $E$ was computed in \cite{Gi}, we shall also mention \cite{X} for earlier work on regular Poisson structures. We present here an approach using extensions.

We now set $A_E=T^*E=T^*\RR^+\times T^*S^2$, and we let $p:\RR^+\times S^2\to S^2$ be projection on the second factor and $\pi:T^*E\to TS^2$ be the composition of $\sharp_E$ with $p_*$. Thus $T^*E$ is an extension of $A_B=TS^2$ by $\K=T^*\RR^+\times S^2$; note that forms on $\K$ are $p$-vertical multi-vector fields.

One defines an Ehresmann connection by letting: $h(X):=f i_X\omega_0$. Thus wee see that the identification $T^*E=T^*\RR^+\times T^*S^2$ also corresponds with the decomposition $A_E=\K\oplus H$. The curvature form $\omega\in \Omega^2(B)\otimes\Gamma(\K)$ is easily computed:
$$\omega(X,Y)=\omega_0(X,Y)\otimes f'\d r.$$

Let us now compute the associated cohomology. The kernel $\K$ is an abelian algebroid (with trivial anchor and bracket) so that $d_\K=0$. This implies $E_0^{p,q}=E_1^{p,q}=\Omega^p(TB)\otimes \Omega^q( \K)$.

We now compute the coboundary operator $d_1$ on $E_1^{0,0}=C^\infty(E)$ and $E_1^{0,1}=\Omega^1(\K)$. Using formulas in proposition \ref{splitdifferential} we get:
$$\begin{array}{ccc}
   (\partial_H g)(X)&=&\Lie_X g, \\
   (\partial_H\,g.\partial r)(X)&=&(\Lie_X g)\partial r,\\
  \end{array}$$
for any $g\in C^\infty(E)$. So we see that $\partial_H(g)=0$ or $\partial_H(g\partial r)=0$ if and only if $g$ only depends on $r$. It follows that $E_2^{0,0}$ and $E_2^{0,1}$ can be identified to $C^\infty(\RR^+)$.

The explicit formula for $d_1$ on $E^{1,0}_1=\Omega^1(TB)\otimes\C^\infty(E)$ is given by:
$$ (\partial_H \alpha)(X,Y)=\Lie_X \alpha(Y)-\Lie_Y \alpha(X)-\alpha([X,Y]),\ \ (\alpha \in E^{1,0}_1).$$
Here $\alpha(X),\alpha(Y),$ and $\alpha([X,Y])$ are function on $\RR^+\times S^2$ but since an element $\alpha$ of $E^{1,0}_1$ is the same as a smooth family of $1$-forms on $S^2$ with parameter $r$, we see that $\partial_H\alpha=0$ if and only if it determines a family of closed forms. It can be made into a family of exact $1$-forms because $H^1(S^2)=\{0\}$ and this can be done in a smooth way with respect to $r$; so we conclude that $E^{1,0}_2=\{0\}$. For the same reason, if we use the pairing with $\d r$ to identify $E_1^{1,1}$ with $\Omega^2(B)\otimes C^\infty(E)$, we one get the same formula for $\partial_H$ as above, thus $E^{1,1}_1=\{0\}$ as well.

The same kind of reasoning applies to compute $E_2^{2,0}$: we know that integration of $2$-forms over $S^2$ induces an isomorphism $H^2(S^2)\to \RR$. A $\partial_H$-closed element in $E_1^{2,0}=\Omega^2(B)\otimes \C^\infty(E)$ being the same thing as a family of closed $2$-forms on $S^2$ with parameters in $\RR^+$, one sees that integration over $S^2$ with fixed parameter yields an isomorphism $E_2^{2,0}\simeq C^\infty(\RR^+)$ given by:
$$\text{[}g.\omega_0\text{]}\longmapsto \bigr(r\mapsto \int_{S^2} g(r,\ )\omega_0\bigl).$$
For the same reasons, one gets has isomorphism $E^{2,1}_{2}\simeq C^\infty(\RR^+)$ via $$\text{[}\omega_0\otimes g\partial r\text{]}\longmapsto \bigl(r \mapsto \int_{S^2} g(r,\ )\omega_0\bigr).$$

These identifications are convenient for our purposes because, as easily checked, the explicit
formula for $\d_2:=[\d_\omega]:E_2^{0,1}\to E_2^{2,0}$ is just multiplication by $f'$. Since $\d_2$
vanishes on $E_2^{p,q}$ for $(p,q)\neq(2,-1)$ the computation is easily concluded. We finally obtain the
Poisson cohomology of $T^*E$ as follows:
$$\begin{array}{rcl}
    H^0(T^*E)&=&C^\infty(\RR^+),\\
    H^1(T^*E)&=&\{g\in C^\infty(\RR^+), f'g=0\},\\
    H^2(T^*E)&=&C^\infty(\RR^+)/f'.C^\infty(\RR^+),\\
    H^3(T^*E)&=&C^\infty(\RR^+).
  \end{array}
$$

\section{Integration}\label{integration}

We will here take advantage of a connection in order to describe ${A_E}$-paths and ${A_E}$-homotopies, obtaining an alternative description of $\G({A_E})$. From now on, the connection will always be assumed to be {\bf{complete}}.

\subsection{\texorpdfstring{Splitting ${A_E}$-paths and ${A_E}$-homotopies}{Splitting paths and homotopies}}
\label{splitting:paths}
Since the essential feature of a Ehresmann connection is to split $A_E$ into a direct sum
\begin{equation*}A_E=\K\oplus H,\end{equation*}
it is natural, in order to describe $\G(A_E)$, to try to decompose $A_E$-paths accordingly. However, for a $A_E$-path $a:I\to A_E$ covering $\gamma:I\to E$, the corresponding decomposition is the following:
$$a(t)=a_V(t)+h(a_B(t)),$$
where $a_B:=\pi\circ a$ and $a_V:I\to \K$. This is not satisfying because none of the two terms above is an $A_E$-path.

Thus, it is better to consider the couple $(\dot{\gamma}_B,a_K)$ where
\begin{equation}\label{atilde}a_K(t):=\Phi^{\gamma_B}_{0,t}\circ a_V(t).
\end{equation}
Here $\Phi^{\gamma_B}_{t,0}:\K_{|E_{\gamma_B(0)}}\mapsto \K_{|E_{\gamma_B(t)}}$ denotes the parallel transport along $a_B$. Indeed, it is easily checked that $a_K$ is a $A_E$-path over  $\gamma_K(t)=\phi^{\gamma_B}_{0,t}\circ\gamma(t)$ lying in $E_{\gamma_B(0)}$. Moreover the correspondence $a\leftrightarrow(a_B,a_K)$ is clearly $1$-$1$ and if we assume the connection to be complete, we see that we have built an homeomorphism 
$$P(A_E)\to P(A_B)\ltimes_B P(\K),$$ 
where $P(A_B)\ltimes_B P(\K)$ denotes couples $(a_B,a_K)\in P(A_B)\times P(\K)$ such that $a_K$ is a $\K$-path lying over $\gamma_B(0)$. Recall that $P(A_E)$ enjoys a natural structure of a Banach space (see \cite{CrFe2} for more details)

Now if we look at how concatenation in $P(A_E)$ behaves through this homeomorphism, we see that the concatenation $v\cdot u$ of two $A_E$-paths $u,v\in P(A_E)$ is sent to $(v_B\cdot u_B,\Phi_{u_B}^{-1}(v_K)\cdot u_K)$ where we have set:
\begin{equation}\label{eq:holonomy:paths}\Phi_{u_B}^{-1}(v_K)(t):=\Phi^{u_B}_{0,1}\circ v_K(t).
\end{equation}

Thus we have proved the following:

\begin{prop}\label{splitpath}{(Splitting ${A_E}$-paths).}
Let $\K\hookrightarrow A_E \twoheadrightarrow A_B$ be a Lie algebroid extension, and fix an Ehresmann connection which is assumed to be complete. Then there is a homeomorphism:
$$\begin{array}{ccc}
 P({A_E})   &\to& P({A_B})\ltimes_B P({\K})\\
    a      &\mapsto&({a_B},{a_K}).
\end{array}$$
under which the concatenation in $P(A_E)$ writes:
\begin{equation}\label{concpath}(a^2_B,a^2_K)\cdot(a^1_B,a^1_K)=(a^2_B\cdot a^1_B,\Phi_{a^1_B}^{-1}(a^2_K)\cdot a^1_K).
\end{equation}
\end{prop}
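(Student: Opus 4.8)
The plan is to establish rigorously the three facts sketched before the statement: that $a\mapsto(a_B,a_K)$ lands in $P(A_B)\ltimes_B P(\K)$, that it is a homeomorphism, and that it converts the concatenation in $P(A_E)$ into the one in \eqref{concpath}. First, for an $A_E$-path $a$ over $\gamma$ I set $a_B:=\pi\circ a$; since $\pi$ is an algebroid morphism and $\sharp_E(a)=\dot\gamma$, Lemma~\ref{projectableanchor} gives $\sharp_B(a_B)=p_*\dot\gamma=\dot\gamma_B$, so $a_B\in P(A_B)$. Writing $a=a_V+h(a_B)$ and $a_K(t):=\Phi^{a_B}_{0,t}(a_V(t))$ as in \eqref{atilde}, the substantive point is that $a_K$ is an honest $\K$-path over the base curve $\gamma_K(t)=\phi^{a_B}_{0,t}(\gamma(t))$, which lies in $E_{\gamma_B(0)}$ because $\phi^{a_B}_{0,t}$ maps $E_{\gamma_B(t)}$ to $E_{\gamma_B(0)}$. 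To see this I would differentiate $\gamma_K$. The map $\phi^{a_B}_{t,0}$ is the flow of the time-dependent vector field $V^t:=s_{\D_{\alpha^t}}=\sharp_E(h(\alpha^t))$ (for any extension $\alpha^t$ of $a_B$), so the standard identity for the inverse flow gives $\tfrac{d}{dt}\gamma_K(t)=d\phi^{a_B}_{0,t}\bigl(\dot\gamma(t)-V^t(\gamma(t))\bigr)$. Since $\dot\gamma=\sharp_E(a_V)+\sharp_E(h(a_B))=\sharp_\K(a_V)+V^t(\gamma(t))$, this reduces to $d\phi^{a_B}_{0,t}\bigl(\sharp_\K(a_V(t))\bigr)$. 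On the other hand $\Phi^{a_B}_{0,t}$ is a Lie algebroid morphism covering $\phi^{a_B}_{0,t}$, hence intertwines anchors, so $\sharp_\K(a_K(t))=d\phi^{a_B}_{0,t}\bigl(\sharp_\K(a_V(t))\bigr)$ as well; the two expressions coincide and $a_K$ is a $\K$-path.

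Next I would exhibit the inverse explicitly. Given a pair $(a_B,a_K)$ with $a_K$ a $\K$-path over $E_{\gamma_B(0)}$, I set $a_V(t):=\Phi^{a_B}_{t,0}(a_K(t))$ and $a:=a_V+h(a_B)$; reversing the computation above shows $a$ is an $A_E$-path, and the two assignments are visibly mutually inverse. Here completeness of the connection is exactly what is needed: it guarantees that $\Phi^{a_B}_{t,0}$ and $\Phi^{a_B}_{0,t}$ are defined on all of the relevant fibers for every $t\in I$, so both maps are defined on all of the respective path spaces. The homeomorphism property then follows from the continuous dependence of the flow of $X^t_{\D_\alpha}$ on the initial datum $a_V$ and on $\alpha$, together with the Banach structure on $P(A_E),P(A_B),P(\K)$ recalled from \cite{CrFe2}.

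Finally, for the concatenation law I would unwind both sides of \eqref{concpath}. The base component is immediate, since $\pi$ commutes with concatenation: $(v\cdot u)_B=v_B\cdot u_B$. For the $\K$-component the essential input is the multiplicativity of parallel transport along a concatenated base path, namely $\Phi^{(v\cdot u)_B}_{0,t}=\Phi^{u_B}_{0,2t}$ on the $u$-phase $t\le\tfrac12$, and $\Phi^{(v\cdot u)_B}_{0,t}=\Phi^{u_B}_{0,1}\circ\Phi^{v_B}_{0,2t-1}$ on the $v$-phase $t\ge\tfrac12$ (after the usual reparametrisation). Applying these to $(v\cdot u)_V$ shows that on the first half the $\K$-path is exactly $u_K$, while on the second half it becomes $\Phi^{u_B}_{0,1}\circ v_K=\Phi^{-1}_{u_B}(v_K)$ in the notation of \eqref{eq:holonomy:paths}; concatenating gives $\Phi^{-1}_{u_B}(v_K)\cdot u_K$, which is precisely \eqref{concpath}.

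I expect the main obstacle to be the first step, the verification that $a_K$ is a genuine $\K$-path: it is the only place where one must differentiate parallel transport, combining the inverse-flow identity with the morphism property of $\Phi^{a_B}$ so that the two anchors match. Once this is settled, the inverse map, the continuity statement and the concatenation formula are essentially bookkeeping.
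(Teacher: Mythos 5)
Your proof is correct and takes essentially the same route as the paper, which establishes the proposition via exactly the same decomposition $a\leftrightarrow(a_B,a_K)$ with $a_K(t)=\Phi^{a_B}_{0,t}(a_V(t))$ but leaves the key verifications as ``easily checked'' and ``clearly $1$-$1$''. Your computations --- the inverse-flow identity combined with the anchor-intertwining property of the algebroid morphism $\Phi^{a_B}_{0,t}$ to show $a_K\in P(\K)$, the explicit inverse $a_V(t)=\Phi^{a_B}_{t,0}(a_K(t))$, and the cocycle property of parallel transport along a concatenated base path --- correctly supply precisely the details the paper omits.
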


\begin{rem}
 Let us explain the picture we have in mind: given an algebroid $A\to M$, we can think of $P({A})$ as a ``groupoid'' over $M$: source and target are defined the obvious way and concatenation plays the role of multiplication. One also has the notions of ``units'' and of ``inverses'': $1_m(t):=0_m$, and $a^{-1}(t):=-a(1-t)$ (note however that $a\cdot a^{-1}\neq 1_{\gamma(0)}$). Morphisms of ``groupoids'' should only commute with source, target and multiplication, and sent units on units, the most obvious way to obtain such a morphism being induced by a algebroid morphism (rather than introducing useless vocabulary, we just stick to ``groupoids'' without further precisions).

Generalizing the notion of action introduced in Appendix \ref{fiberedliegroupoids} to ``groupoids'', we can use the holonomy $\Phi:P({A_B})\mapsto \Gau(P(\K))$ to get an action of $P({A_B})\tto B$ on the fibered ``groupoid'' $P(\K)\to B$, and then build the corresponding action ``groupoid'' $P({A_B})\ltimes P(\K)\tto E$. Then we see that the  formula for composition is exactly the one given in the Proposition \ref{splitpath}. Thus we can think of $P(A_E)$ as a groupoid associated to an action on a fibered groupoid.
\end{rem}

Because of the presence of curvature, there is no action of the groupoid $\G(A_B)$ neither on $P(\K)$, nor on $\G(\K)$. However, it is still possible to describe ${A_E}$-homotopies through this homeomorphism in a reasonable way, as stated in next proposition.

Let us before precise a natural procedure to extend a ${A_E}$-path $a$ into a time-dependant
section of ${A_E}$ we will always implicitly use: first extend arbitrarily ${a_B}(t)$ into
$\alpha_B(t)\in \Gamma(A_B)$ and $a_V$  into a time-dependant
section $\alpha_V$ of $\K\subset {A_E}$. Clearly, $\alpha:=h(\alpha_B(t))+\alpha_V$ extends $a$.
Moreover, $\alpha_K(t):=(\Phi^{a_B}_{0,t})_*(\alpha_V(t))$ extends the $\K$-path $a_K$ defined by
(\ref{atilde}).

\begin{prop}\label{prop:splithomotopy}
Let $a^\epsilon$ be a smooth family  of $A_E$-paths and $(a_B^\epsilon,a_K^\epsilon)$ the
corresponding family in $P({A_B})\ltimes P(\K)$ given by last proposition.

Then $a^\epsilon$ defines a $A_E$-homotopy  if and only $a_B^\epsilon$ is a ${A_B}$-homotopy such
that the unique solution $\mu_K\in\Gamma(\K)$ of the evolution equation:
\begin{equation}\label{splitevolution}
 \frac{d\alpha_K}{d\epsilon}-\frac{d\mu_K}{dt}=\bigl[\alpha_K,\mu_K\bigr]_{\K},
\end{equation}
with initial condition $\mu_K^0(\epsilon)=0$ satisfies:
\begin{equation}\label{splitcondition}{\mu_K^1(\epsilon)}_{\gamma_K^\epsilon(1)}=\int_0^1 (\Phi^{a_B^\epsilon}_{0,s})_*\Bigl(\omega(a_B,b_B)_{s,\epsilon}\Bigr)ds, \ \forall\epsilon\in I,
\end{equation}
Here, $a_B\d t+ b_B\d\epsilon:TI^2\to A_B$ is the Lie algebroid morphism induced by the $A_B$-homotopy $a_B^\epsilon$, and $\alpha_\K$ is extending $a_K$ as explained above.
\end{prop}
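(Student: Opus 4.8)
The plan is to recognize an $A_E$-homotopy as a Lie algebroid morphism $h_E=a\,\d t+b\,\d\epsilon:TI^2\to A_E$ and to read off what the defining condition $\partial_t b-\partial_\epsilon a=[\alpha,\beta]_{A_E}$ (with $\alpha,\beta$ arbitrary $t$- and $\epsilon$-dependent sections extending $a,b$) becomes after splitting $A_E=\K\oplus H$. Writing $a=a_V+h(a_B)$ and $b=b_V+h(b_B)$ and using the split brackets (\ref{splitbrackets1})--(\ref{splitbrackets3}), the horizontal component of the structure equation is exactly $\partial_t b_B-\partial_\epsilon a_B=[\alpha_B,\beta_B]_{A_B}$, i.e.\ the statement that $a_B^\epsilon=\pi\circ a^\epsilon$ is an $A_B$-homotopy (immediate, since $\pi\circ h_E$ is a morphism $TI^2\to A_B$). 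The vertical component reads
$$\partial_t b_V-\partial_\epsilon a_V=\omega(a_B,b_B)+\D_{a_B}b_V-\D_{b_B}a_V+[a_V,b_V]_\K,$$
and the whole point is to transport this equation to a single fibre by holonomy and to show that the curvature term $\omega$ integrates up into the boundary condition (\ref{splitcondition}).

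Next I would apply the parallel transport $P_t:=(\Phi^{a_B^\epsilon}_{0,t})_*$ to the vertical equation, setting $\alpha_K:=P_t a_V$ (this extends $a_K$ from (\ref{atilde})) and $\tilde\mu_K:=P_t b_V$. Three facts do the work. First, since $P_t$ is generated by $\D_{a_B}$, one has $\frac{d}{dt}(P_t\kappa)=P_t(\partial_t\kappa-\D_{a_B}\kappa)$, which absorbs the $\D_{a_B}b_V$ term. Second, $\Phi^{a_B^\epsilon}_{0,t}$ is a Lie algebroid morphism of $\K$, so $P_t$ preserves the bracket and $P_t[a_V,b_V]_\K=[\alpha_K,\tilde\mu_K]_\K$. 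Third, and this is the crux, I need the variation of holonomy in the $\epsilon$-direction: differentiating the defining relation $\partial_t P_t=-P_t\D_{a_B}$ in $\epsilon$, substituting $\partial_\epsilon a_B=\partial_t b_B-[a_B,b_B]_{A_B}$ from the base homotopy equation, and invoking $\Curv_\D=\ad^\K\circ\,\omega$ (\ref{curvatures}), one finds $\partial_\epsilon P_t=-P_t\D_{b_B}+C_t$ where the correction satisfies $\partial_t(C_t P_t^{-1})=-\ad^\K_{P_t\omega(a_B,b_B)}$. The $-P_t\D_{b_B}$ piece absorbs the $\D_{b_B}a_V$ term, and since $b_B$ vanishes at the $t$-ends of a fixed-endpoint homotopy one has $C_0=0$, so integrating gives $C_t=-\ad^\K_{\Omega_t}\circ P_t$ with $\Omega_t:=\int_0^t(\Phi^{a_B^\epsilon}_{0,s})_*\bigl(\omega(a_B,b_B)_{s,\epsilon}\bigr)\d s$.

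Putting these together, the transported vertical equation collapses to an \emph{inhomogeneous} evolution equation for $\tilde\mu_K$ whose only source is the curvature contribution $\tfrac{d\Omega_t}{dt}$ (together with a bracket $[\alpha_K,\Omega_t]_\K$). The key algebraic observation is then that $\mu_K:=\tilde\mu_K+\Omega_t$ satisfies precisely the \emph{homogeneous} equation (\ref{splitevolution}): the correction $\Omega_t$ is tailored so that $\tfrac{d\Omega_t}{dt}$ and $[\alpha_K,\Omega_t]_\K$ cancel the inhomogeneity. Since (\ref{splitevolution}) is a linear first-order ODE in $t$ with $\tfrac{d\alpha_K}{d\epsilon}$ as a known term, its solution is determined by its value at $t=0$; as $b_V(0,\epsilon)=0$ and $\Omega_0=0$ we have $\mu_K^0=0$, so this $\mu_K$ is exactly the solution singled out in the statement. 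Evaluating at $t=1$ and using $b_V(1,\epsilon)=0$, hence $\tilde\mu_K^1=0$, yields $\mu_K^1=\Omega_1$, which is condition (\ref{splitcondition}). The converse runs the same computation backwards: given that $a_B^\epsilon$ is an $A_B$-homotopy and that the homogeneous solution $\mu_K$ meets (\ref{splitcondition}), one sets $\tilde\mu_K:=\mu_K-\Omega_t$ (automatically vanishing at both $t$-ends), defines $b_V:=P_t^{-1}\tilde\mu_K$ and $b:=b_V+h(b_B)$, and checks that $a\,\d t+b\,\d\epsilon$ is a morphism, i.e.\ that $a^\epsilon$ is an $A_E$-homotopy.

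I expect the main obstacle to be the variation-of-holonomy computation of the second paragraph: making rigorous sense of $\partial_\epsilon P_t$ (the transport depends on $\epsilon$ both through the path $a_B^\epsilon$ and, a priori, through its moving endpoints), interchanging the $t$- and $\epsilon$-derivatives, and identifying the resulting obstruction with the $\omega$-integral $\Omega_t$ via $\Curv_\D=\ad^\K\circ\,\omega$. Completeness of the connection is what guarantees $P_t$ is defined for all $t\in I$ and along the whole family, and it is used silently throughout. The remaining steps are bookkeeping, though one must fix the sign conventions for the flow of $\D_{a_B}$, for the homotopy structure equation, and for the order in $[\,\cdot\,,\cdot\,]_\K$ consistently, since these govern whether the correction enters as $\tilde\mu_K+\Omega_t$ or $\tilde\mu_K-\Omega_t$ and the sign on the right-hand side of (\ref{splitcondition}).
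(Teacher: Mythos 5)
Your proposal is correct and follows essentially the same route as the paper's proof: split the homotopy equation into its $A_B$- and $\K$-components via the split brackets, transport the vertical equation by $(\Phi^{a_B^\epsilon}_{0,t})_*$, and observe that the transported vertical term plus the integrated-curvature correction $\Omega_t$ (the paper's $F_{\epsilon,t}$) solves the homogeneous equation \eqref{splitevolution}, so that the endpoint condition becomes \eqref{splitcondition}. Your variation-of-holonomy step is precisely the content of the paper's Lemma \ref{small:lemma}, which the paper proves by introducing an auxiliary section $\sigma$ whose flow realizes $\Phi^{a_B^0}_{t,0}\circ\Phi^{a_B^\epsilon}_{0,t}$ and comparing $\sigma$ with $h(\beta_B)$, rather than by differentiating the transport operator in $\epsilon$ directly as you propose, but the identity obtained (and the sign bookkeeping it requires) is the same.
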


\begin{proof}
 We first extend the smooth family of $A_E$-paths $a^\epsilon$ into a smooth family of time-dependant sections $\alpha^\epsilon$
 as specified above: $\alpha=h(\alpha_B)+\alpha_V$. It is easily checked that the solution $\beta$ of the evolution equation \eqref{evolution} is
 $\pi$-projectable provided $\alpha$ and the initial condition are. Thus, we can write the solution as  $\beta=h(\beta_B)+\beta_V$ and use
 the formula for brackets \eqref{splitbrackets1} \eqref{splitbrackets2} \eqref{splitbrackets3}.  We see that the evolution equation \eqref{evolution} in ${A_E}$
 is equivalent to the following two evolution equations
\begin{align}
\frac{d\alpha_B}{d\epsilon}-\frac{d\beta_B}{dt}=&[\alpha_B,\beta_B]_{{A_B}}\label{splithomotopy1},\\
\D_{\beta_B}(\alpha_V)+\frac{d\alpha_V}{d\epsilon}-\D_{\alpha_B}(\beta_V)+\frac{d\beta_V}{d  t}=&[\alpha_V,\beta_V]_{\K}+\omega(\alpha_B,\beta_B), \label{splithomotopy2}
\end{align}
with initial conditions $\beta_B^0(\epsilon)=0$ and $ \beta_V^0(\epsilon)=0$. The homotopy
condition \eqref{hcondition} is clearly equivalent to $\beta_B^1(\epsilon)=0,
\beta_V^1(\epsilon)=0$ and in particular, one gets from (\ref{splithomotopy1}) that
$a_B^{\epsilon}$ is an homotopy in ${A_B}$.

Let us now look at equation (\ref{splithomotopy2}): in order to get an evolution equation
involving $\alpha_K^\epsilon$, we push-forward (\ref{splithomotopy2}) by means of
$\Phi^{a_B^\epsilon}_{0,t}$:
\begin{multline*}(\Phi^{a_B^\epsilon}_{0,t})_*\bigl(\D_{\beta_B}(\alpha_V)+\frac{d\alpha_V}{d\epsilon}\bigr)
 -(\Phi^{a_B^\epsilon}_{0,t})_*\bigl(\D_{\alpha_B}(\beta_V)+\frac{d\beta_V}{d t}\bigr)\\
  =(\Phi^{a_B^\epsilon}_{0,t})_*\bigl([\alpha_V,\beta_V]_{\K}+\omega(\alpha_B,\beta_B)\bigr).
\end{multline*}
Only the first term on the left-hand side of this equality is problematic, it will be taken care
of in the next lemma. This lemma shows that (\ref{splithomotopy2}) holds if and only if the following
holds:
\begin{multline*}
\frac{d}{d\epsilon}(\Phi^{a_B^\epsilon}_{0,t})_*(\alpha_V)-\Bigl[\int_0^t (\Phi^{a_B^\epsilon}_{0,s})_*\bigl(\omega(\alpha_B,\beta_B)_{s,\epsilon}\bigr)ds,(\Phi^{a_B^\epsilon}_{0,t})_*(\alpha_V)\Bigl]_{\K}\\
-\frac{d}{dt}(\Phi^{a_B^\epsilon}_{0,t})_*(\beta_V)
=\bigl[(\Phi^{a_B^\epsilon}_{0,t})_*(\alpha_V),(\Phi^{a_B^\epsilon}_{0,t})_*(\beta_V)\bigr]_{\K}+(\Phi^{a_B^\epsilon}_{0,t})_*(\omega(\alpha_B,\beta_B)).
\end{multline*}
In other words, if we set
$$\left\{\begin{array}{ccl}
\beta_K^t(\epsilon)&:=&(\Phi^{a_B^\epsilon}_{0,t})_*(\beta_V^t(\epsilon)),\\
F_{\epsilon,t}&:=&\displaystyle \int_0^t (\Phi^{a_B^\epsilon}_{0,s})_*\bigl(\omega(\alpha_B,\beta_B)_{s,\epsilon}\bigr)ds,
\end{array}\right.$$
we see that (\ref{splithomotopy2}) holds if and only if $\beta_K+F$ is solution of equation (\ref{splitevolution}):
\begin{equation*}
\frac{d}{dt}(\beta_K+F)-\frac{d{\alpha_\K}}{d\epsilon}+\bigl[\alpha_K,\beta_K+F\bigr]_{\K}=0.
\end{equation*}
 Now observe that the homotopy condition $\beta_V^1(\epsilon)=0$ holds if and only if
 $\beta_K^1(\epsilon)=0$ (here, we voluntarily omit base points). By uniqueness of the solution $\mu_K=\beta_K+F$ of \eqref{splitevolution} we see that $\beta_V^1(\epsilon)=0$ if and only if \eqref{splitcondition} holds.
\end{proof}

\begin{lem}\label{small:lemma} With the same notations as above, one has:
$$(\Phi^{a_B^\epsilon}_{0,t})_*\bigl(\D_{ \beta_B}({\alpha_V})+\frac{d\alpha_V}{d\epsilon}\bigr)=\frac{d}{d\epsilon}(\Phi^{a_B^\epsilon}_{0,t})_*(\alpha_V)-\bigl[F,(\Phi^{a_B^\epsilon}_{0,t})_*(\alpha_V)\bigl]_{\K}.$$
\end{lem}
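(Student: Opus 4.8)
The plan is to prove the Lemma in a stronger, ``operator-valued'' form, working with the family of parallel transports itself rather than with its value on $\alpha_V$. Write $G(t):=(\Phi^{a_B^\epsilon}_{0,t})_*$ for the backward parallel transport (so $G(0)=\mathrm{id}$), viewed as a linear map on sections of $\K$ near the relevant fibers. The fundamental property of parallel transport, applied to a section independent of $t$, reads $\partial_t G=G\circ\D_{\alpha_B}$ as operators; this is merely the defining $t$-evolution of the flow $\phi^{\D_\alpha}_{0,t}$ and is the only information about the $t$-dependence of $G$ that I will use. I then introduce
\[
W(t):=\partial_\epsilon G-G\circ\D_{\beta_B}+\ad^\K_F\circ G,
\]
and the goal becomes to show $W\equiv 0$: the Lemma follows at once by applying $W=0$ to $\alpha_V$ and using $\partial_\epsilon(G\alpha_V)=(\partial_\epsilon G)\alpha_V+G\,\partial_\epsilon\alpha_V$ together with $\ad^\K_F(G\alpha_V)=[F,G\alpha_V]_\K$.

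The key algebraic input comes from differentiating the $t$-evolution in $\epsilon$. Since $\alpha\mapsto\D_\alpha$ is $\RR$-linear, one has $\partial_\epsilon\D_{\alpha_B}=\D_{\partial_\epsilon\alpha_B}$, and the homotopy relation \eqref{splithomotopy1}, namely $\partial_\epsilon\alpha_B-\partial_t\beta_B=[\alpha_B,\beta_B]_{A_B}$, gives $\partial_\epsilon\D_{\alpha_B}=\D_{\partial_t\beta_B}+\D_{[\alpha_B,\beta_B]_{A_B}}$. Feeding in the definition $\Curv_\D(\alpha_B,\beta_B)=[\D_{\alpha_B},\D_{\beta_B}]-\D_{[\alpha_B,\beta_B]_{A_B}}$ and the curvature identity \eqref{curvatures}, this becomes
\[
\partial_\epsilon\D_{\alpha_B}=\D_{\partial_t\beta_B}+[\D_{\alpha_B},\D_{\beta_B}]-\ad^\K_{\omega(\alpha_B,\beta_B)}.
\]
The remaining ingredient concerns $F$: by the fundamental theorem of calculus $\partial_t F=G(\omega(\alpha_B,\beta_B))$, and since $G$ is a Lie algebroid morphism it preserves $[\ ,\ ]_\K$, whence $\ad^\K_{\partial_t F}\circ G=G\circ\ad^\K_{\omega(\alpha_B,\beta_B)}$.

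With these in hand I compute $\partial_t W$ using $\partial_t\partial_\epsilon G=\partial_\epsilon\partial_t G=\partial_\epsilon(G\,\D_{\alpha_B})$ and the three displayed identities. The terms $G\D_{\partial_t\beta_B}$ cancel between $\partial_t\partial_\epsilon G$ and $-\partial_t(G\D_{\beta_B})$; the two occurrences of $\ad^\K_{\omega(\alpha_B,\beta_B)}$ cancel (one from $\partial_\epsilon\D_{\alpha_B}$, one from $\partial_t F$); and the commutator term collapses $G[\D_{\alpha_B},\D_{\beta_B}]-G\D_{\alpha_B}\D_{\beta_B}$ to $-G\D_{\beta_B}\D_{\alpha_B}$. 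Factoring $\D_{\alpha_B}$ out on the right then yields the homogeneous linear equation $\partial_t W=W\circ\D_{\alpha_B}$. Since $\beta_B$ and $F$ vanish at $t=0$ and $G(0)=\mathrm{id}$ (so $\partial_\epsilon G(0)=0$), one has $W(0)=0$, and uniqueness of solutions of this linear ODE forces $W\equiv 0$.

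The main obstacle is not the algebra above but making the operator manipulations rigorous: $G(t)$ intertwines sections over the \emph{moving} fibers $\K|_{E_{\gamma_B^\epsilon(t)}}$ and $\K|_{E_{\gamma_B^\epsilon(0)}}$, so the compositions $G\circ\D_{\alpha_B}$, the interchange $\partial_t\partial_\epsilon=\partial_\epsilon\partial_t$, and the very notion of the ODE for $W$ must be justified fiberwise along the base homotopy, with careful bookkeeping of base points and of the signs in the defining evolution $\partial_t G=G\circ\D_{\alpha_B}$. This is routine given smooth dependence of flows on parameters and initial conditions (as in \cite{CrFe2}), but it is where the care is needed; once that evolution and the morphism property of $G$ are granted, the computation is purely formal.
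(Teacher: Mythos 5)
Your overall strategy is sound and genuinely different from the paper's proof: the paper introduces an auxiliary section $\sigma$ via the evolution equation \eqref{sigmaevolution}, identifies its flow with $\Phi^{a_B^0}_{t,0}\circ\Phi^{a_B^\epsilon}_{0,t}$ by the commuting-flow argument of Proposition \ref{hgeom}, and then compares $\sigma$ with $h(\beta_B)$ through the curvature, whereas you derive a linear ODE in $t$ for the defect operator $W$ and conclude by uniqueness (indeed $W\circ G^{-1}$ is $t$-constant, so $W(0)=0$ forces $W\equiv 0$). Your ingredients $\partial_t G=G\circ\D_{\alpha_B}$, $\partial_t F=G(\omega(\alpha_B,\beta_B))$, $\ad^\K_{\partial_t F}\circ G=G\circ\ad^\K_{\omega(\alpha_B,\beta_B)}$, and the use of \eqref{splithomotopy1} are all correct, and the fiberwise bookkeeping you flag at the end is indeed routine.

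There is, however, a genuine sign gap, located exactly at the one identity you imported without re-deriving, namely \eqref{curvatures}. With the paper's definitions $\omega(\alpha,\beta)=h([\alpha,\beta]_{A_B})-[h(\alpha),h(\beta)]_{A_E}$ and $\Curv_\D(\alpha,\beta)=[\D_\alpha,\D_\beta]-\D_{[\alpha,\beta]_{A_B}}$, the Jacobi identity gives, for $\kappa\in\Gamma(\K)$,
$$\Curv_\D(\alpha,\beta)(\kappa)=\bigl[[h(\alpha),h(\beta)]_{A_E}-h([\alpha,\beta]_{A_B}),\kappa\bigr]_{A_E}=-[\omega(\alpha,\beta),\kappa]_\K,$$
so in fact $\Curv_\D=-\ad^\K\circ\,\omega$: equation \eqref{curvatures} as printed carries a sign slip. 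Taking \eqref{curvatures} at face value, your $W=\partial_\epsilon G-G\circ\D_{\beta_B}+\ad^\K_F\circ G$ is indeed the combination satisfying $\partial_t W=W\circ\D_{\alpha_B}$ with $W(0)=0$, but then $W\equiv 0$ yields the Lemma with $+\bigl[F,(\Phi^{a_B^\epsilon}_{0,t})_*(\alpha_V)\bigr]_\K$ in place of $-\bigl[F,(\Phi^{a_B^\epsilon}_{0,t})_*(\alpha_V)\bigr]_\K$; the Lemma does not ``follow at once'', and what you actually prove contradicts the statement to be proved. The stated sign is the correct one for the $F$ built from this $\omega$: it is what the paper's $\sigma$-argument gives (note $(\Phi^{a_B^\epsilon}_{0,t})_*(h(\beta_B)-\sigma)=-F$, so the final display of the paper's own proof carries the same typo), and what your scheme gives once corrected. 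The repair is local: use $\D_{[\alpha_B,\beta_B]_{A_B}}=[\D_{\alpha_B},\D_{\beta_B}]+\ad^\K_{\omega(\alpha_B,\beta_B)}$ and set $W:=\partial_\epsilon G-G\circ\D_{\beta_B}-\ad^\K_F\circ G$; the same cancellations go through, $\partial_t W=W\circ\D_{\alpha_B}$, $W(0)=0$, and $W\equiv 0$ now gives the Lemma verbatim.
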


\begin{proof}
Consider the solution $\sigma$ of the following evolution equation:
\begin{eqnarray} \frac{d\sigma}{dt}-\frac{dh(\alpha_B)}{d\epsilon}+[h(\alpha_B),\sigma]_{A_E}=0\label{sigmaevolution},
\end{eqnarray}
with initial condition $\sigma^0(\epsilon)=0$. We can use the same principle as in the proof of Proposition \ref{hgeom} to show that the flow  $\psi^{\sigma^t}$ of the time-dependant derivation $ad^{A_E}_{\sigma^t}$ satisfies:
\begin{eqnarray}\label{sigmaflow}
\psi^{\sigma^t}_{0,\epsilon}=\Phi^{a_B^0}_{t,0}\circ\Phi^{a_B^\epsilon}_{0,t}.
\end{eqnarray}
Let us now relate $\sigma$ and $h(\beta_B)$: first applying $h(-)$ to (\ref{splithomotopy1}), one gets:
$$\frac{dh(\beta_B)}{dt}-\frac{dh(\alpha_B)}{d\epsilon}+h\bigl([\alpha_B,\beta_B]_{A_B}\bigr)=0. $$
In this equality, one can subtract equation (\ref{sigmaevolution}) to get the following:
\begin{equation}
 \label{curvature:difference}
\frac{d(\sigma-h(\beta_H))}{dt}+\bigl[h(\alpha_B),\sigma-h(\beta_B)\bigr]_{A_E}=\omega(\alpha_B,\beta_B).
\end{equation}
By applying $(\Phi^{a_B^\epsilon}_{0,t})_*$ to this last equation, we obtain the desired relation
between $\sigma$ and $h(\beta_B)$:
\begin{equation}\label{sigmacurv} \frac{d}{dt}(\Phi^{a_B^\epsilon}_{0,t})_*\bigl(\sigma-h(\beta_B)\bigr)=(\Phi^{a_B^\epsilon}_{0,t})_*\bigl(\omega(\alpha_B^{\epsilon,t},\beta_B^{\epsilon,t})\bigr).
\end{equation}
We now have enough material to prove the equality stated by the lemma: decompose the left-hand term as follows:
\begin{multline*}(\Phi^{a_B^\epsilon}_{0,t})_*\Bigl(\D_{\beta_B}({\alpha_V})+\frac{d\alpha_V}{d\epsilon}\Bigr)
   =(\Phi^{a_B^\epsilon}_{0,t})_*\Bigl([\sigma,{\alpha_V}]+\frac{d\alpha_V}{d\epsilon}\Bigr)
          +(\Phi^{a_B^\epsilon}_{0,t})_*\bigl([h(\beta_B)-\sigma,{\alpha_V}]\bigr).
\end{multline*}
In this expression, we first check using \eqref{sigmaflow} that the first term gives $\frac{d}{d\epsilon}(\psi^{\gamma_B^\epsilon}_{0,t})_*(\alpha_V)$:
$$\begin{array}{ccc}
(\Phi^{a_B^\epsilon}_{0,t})_*\bigl([\sigma,{\alpha_V}]+\frac{d\alpha_V}{d\epsilon}\bigr)
&=&(\Phi^{a_B^0}_{0,t})_*\circ(\psi^{\sigma^t}_{0,\epsilon})_*\bigl([\sigma,{\alpha_V}]+\frac{d\alpha_V}{d\epsilon}\bigr)\\
&=&(\Phi^{a_B^0}_{0,t})_*\bigl(\frac{d}{d\epsilon}(\psi^{\sigma^t}_{0,\epsilon})_*(\alpha_V)\bigr)\\
&=&\frac{d}{d\epsilon}(\Phi^{a_B^0}_{0,t})_*\circ(\psi^{\sigma^t}_{0,\epsilon})_*(\alpha_V)\\
&=&\frac{d}{d\epsilon}(\Phi^{a_B^\epsilon}_{0,t})_*(\alpha_V).
\end{array}$$
It is easily integrated relation \eqref{sigmacurv} to show that the second term gives
$$
(\Phi^{a_B^\epsilon}_{0,t})_*\bigl([h(\beta_B)-\sigma,{\alpha_V}]\bigr)=\bigl[F,(\psi^{\gamma_B^\epsilon}_{0,t})_*(\alpha_V)\bigl]_{\K},$$
which completes the proof.\end{proof}

Now that we have translated the homotopy condition to a condition on the split paths, the
theorem below follows easily from Proposition \ref{hgeom}.

\begin{thm}\label{thm-2cocycle} Consider a Lie algebroid extension $ K\hookrightarrow A_E \twoheadrightarrow A_B,$
endowed with a Ehresmann connection $A_E=\K\oplus H$, which we assume to be complete.

Then the topological source simply connected groupoid $\G({A_E})$ integrating ${A_E}$ is naturally
identified with the quotient
$$P(A_B)\ltimes_B \G({\K})/\sim$$
where the equivalence relation is given by: $(a_0,g_0)\sim(a_1,g_1)$ if and only if there exists a
${A_B}$-homotopy $h_B=a \d t+b \d\epsilon:TI^2\mapsto {A_B}$ between $a_0$ and $a_1$ such that:
$$g_1\cdot g_0^{-1}=\partial (h_B,\t(g_0)).$$ Here, $\partial(h_B, x_0)$ is the element in $\G(\K)$
represented by the ${\K}$-path:
\begin{equation}\label{partial:path}\epsilon\rightarrow\int_0^1 (\Phi^{a_B^\epsilon}_{0,s})_*\Bigl( \omega(a,b)_{s,\epsilon}\Bigr)\d s\in {\K}_{{\gamma_K}^\epsilon},
\end{equation}
where $\gamma_K^\epsilon:=\phi^{-1}_{a_B^\epsilon}\circ \phi_{a_B^0}(x_0).$
\end{thm}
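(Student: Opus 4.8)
The plan is to realize $\G(A_E)=P(A_E)/\!\sim_{A_E}$ directly, using the two structural results already established: the splitting homeomorphism of Proposition \ref{splitpath} and the translation of the homotopy condition in Proposition \ref{prop:splithomotopy}. First I would rewrite the quotient map $P(A_E)\to\G(A_E)$ through the identification $P(A_E)\cong P(A_B)\ltimes_B P(\K)$, $a\mapsto(a_B,a_K)$, so that a point of $\G(A_E)$ is an $A_E$-homotopy class of a couple $(a_B,a_K)$. The goal is then to factor this quotient as
$$P(A_B)\ltimes_B P(\K)\longrightarrow P(A_B)\ltimes_B\G(\K)\xrightarrow{\ q\ }\G(A_E),$$
and to show that $q$ becomes an isomorphism after passing to the stated relation $\sim$.

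The first step is to check that the quotient descends along $P(\K)\to\G(\K)$ in the fibre, i.e. that fixing $a_B$ and replacing $a_K$ by a $\K$-homotopic $\K$-path yields $A_E$-homotopic couples. I would obtain this by feeding a \emph{constant} $A_B$-homotopy $a_B^\epsilon\equiv a_B$ into Proposition \ref{prop:splithomotopy}: then $b_B=0$, the curvature term $\omega(a_B,b_B)$ vanishes, and condition \eqref{splitcondition} reduces to $\mu_K^1(\epsilon)=0$. Since \eqref{splitevolution} with $\mu_K^0=\mu_K^1=0$ is exactly the defining equation of a $\K$-homotopy, this shows $(a_B,a_K^0)\sim_{A_E}(a_B,a_K^1)$ whenever $a_K^0$ and $a_K^1$ are $\K$-homotopic, so $q$ is well defined and surjective, surjectivity being inherited from that of $P(A_E)\to\G(A_E)$. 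One must also note here that the holonomy action $\Phi$ used in $P(A_B)\ltimes_B\G(\K)$ genuinely descends, because each $\Phi_a$ is a Lie algebroid morphism and hence carries $\K$-homotopies to $\K$-homotopies.

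Next I would determine when $q(a_0,g_0)=q(a_1,g_1)$. By construction this means there exist representatives $a_K^0\in g_0$, $a_K^1\in g_1$ with $(a_0,a_K^0)\sim_{A_E}(a_1,a_K^1)$, which by Proposition \ref{prop:splithomotopy} amounts to an $A_B$-homotopy $h_B=a\,\d t+b\,\d\epsilon$ from $a_0$ to $a_1$ together with a solution $\mu_K$ of \eqref{splitevolution} satisfying $\mu_K^0=0$ and the boundary value \eqref{splitcondition}, namely $\mu_K^1(\epsilon)=\int_0^1(\Phi^{a_B^\epsilon}_{0,s})_*(\omega(a,b)_{s,\epsilon})\,\d s=:F(\epsilon)$. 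The decisive observation is that the pair $(\alpha_K,\mu_K)$ is precisely a morphism $TI^2\to\K$, its flatness being \eqref{splitevolution}, whose four boundary $\K$-paths are the bottom $a_K^0$, the top $a_K^1$, the trivial left edge (from $\mu_K^0=0$), and the right edge $\epsilon\mapsto F(\epsilon)$ of \eqref{partial:path}. The standard boundary relation attached to such a square in the Weinstein groupoid — a $\K$-homotopy with a moving right endpoint — then reads $g_1=[F]\cdot g_0$ in $\G(\K)$, that is $g_1\cdot g_0^{-1}=[F]=\partial(h_B,\t(g_0))$; the base-point bookkeeping checks out, since at $\epsilon=0$ one has $\gamma_K^0=x_0=\t(g_0)$. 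Conversely, from such an $h_B$ with $g_1 g_0^{-1}=\partial(h_B,\t(g_0))$ one reconstructs $\mu_K$ by solving \eqref{splitevolution} and recovers an $A_E$-homotopy, so the fibres of $q$ are exactly the classes of the stated relation $\sim$.

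The hard part will be this last identification of $\mu_K^1=F$ with the product relation $g_1 g_0^{-1}=[F]$ in $\G(\K)$: it requires the standard but delicate manipulation of $\K$-homotopies (cutting the square into strips, reparametrising, and using uniqueness of solutions of the evolution equation), which is exactly the content of Proposition \ref{hgeom} and which I would invoke as the main technical input. To finish, I would verify that $\sim$ really is an equivalence relation — reflexivity from the constant homotopy, for which $F\equiv 0$; symmetry by reversing $h_B$; and transitivity by concatenating $A_B$-homotopies, which forces a cocycle identity for $\partial$ and explains the label of the theorem — and that $q$ intertwines source, target and the multiplication \eqref{concpath} with those of $\G(A_E)$, so that it is an isomorphism of topological groupoids. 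The source-simple-connectedness asserted in the statement is that of the Weinstein groupoid $\G(A_E)$ itself, transported through $q$.
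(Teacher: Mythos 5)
Your proposal is correct and follows essentially the same route as the paper: the paper's own proof consists precisely of combining the splitting of paths (Proposition \ref{splitpath}) with the split homotopy condition (Proposition \ref{prop:splithomotopy}) and then invoking Proposition \ref{hgeom} to convert the boundary data of the flat square in $\K$ into the relation $g_1\cdot g_0^{-1}=\partial(h_B,\t(g_0))$ in $\G(\K)$. Your write-up in fact makes explicit several verifications (descent along $P(\K)\to\G(\K)$, the converse reconstruction, and the equivalence-relation and groupoid-compatibility checks) that the paper leaves to the reader.
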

\begin{ex}
In the case of central extensions, Theorem \ref{thm-2cocycle} reduces to a result due to Crainic in a Lie algebroid approach to prequantization \cite{Cr}, see also \cite{CZ} in the context of Poisson and Jacobi manifolds. Recall that
 $E=B$ and $\K:=\mathbb{R}^n\times B$ is a bundle of abelian Lie algebras on which $A_B$
acts trivially, while $\omega$ is a $d_A$-closed $2$-form on $A_B$ with values in $\mathbb{R}^n$. 

Then for any $A_B$-path $a_B$ covering $\gamma_B$, the holonomy
$\Phi_{a_B}:\K_{\gamma_B(0)}\to\K_{\gamma_B(1)}$ is trivial:
$\Phi_{a_B}(r,\gamma_B(0))=(r,\gamma_B(1)).$ Moreover $\G(\K)$ is identified with the bundle of abelian Lie groups $B\times \mathbb{R}^n$ by averaging $\K$-paths. This process makes appear a second integral in the formula \eqref{partial:path} and the homotopy condition reads as follows: two elements $(a_0,g_0)$ and $(a_1,g_1)$
in ${\mathbb{R}^n}\times P(A_B)$ are homotopic if and only if there exists a ${A_B}$-homotopy
$\gamma_B:TI^2\mapsto {A_B}$ between $\gamma_0$ and $\gamma_1$ such that:
$$ g_1-g_0=\int_{\gamma_B} \omega.$$

The Theorem \ref{thm-2cocycle} might be seen as a non abelian generalization of this result.
\end{ex}

\begin{rem} There is an obvious way to concatenate two $A_B$-homotopies $h=a\d t+b\d\epsilon, \tilde{h}=\tilde{a}\d t+\tilde{b}\d\epsilon$ such that
$h^{\epsilon=1}={\tilde{h}}^{\epsilon=0}$. This amounts to concatenate two algebroid morphisms $TI^2\to
A_B$ along the edges $\{\epsilon=1\}$ and $\{\epsilon=0\}$ of two copies of $I^2$.

To do this properly, one needs to reparametrize in the $\epsilon$ variable in order to make sure the concatenation is smooth (on the way one sees that it is enough to assume that $a_{\epsilon=1}=\tilde{a}_{|\epsilon=0}$ to be able to concatenate). We leave the details to the reader since the construction is a straightforward adaptation of the concatenation for $A$-paths as given in \cite{CrFe2}.

Then it is not hard to see that the operator $\partial$ defined in the theorem satisfies by construction:
$$\partial(\tilde{h}\cdot h)=\partial\tilde{h}\cdot\partial h.$$
This suggests one should think of $A$-homotopies as a ``groupoid'' over $P(A)$. In fact $A$-homotopies modded out by higher homotopies (so that the concatenation is associative) form a Banach groupoid over $P(A)$, and this groupoid presents a finite dimensional differentiable stack $\mathcal{G}(A)$. As shown in \cite{THZ}, $\mathcal{G}(A)$ is itself again a stacky groupoid with the same base as $A$ and it is in the sense of differentiable stacks the universal
integrating object of $A$.

\end{rem}
\begin{rem}The reader might be surprised that the formula \eqref{partial:path} does not
involve the values of $\omega(a,b)_{s,\epsilon}$ at the base points $\gamma^\epsilon(s)$ but at   $\phi_{s,1}^{a^\epsilon}(\gamma^{\epsilon}(1))$. Let us explain this fact: for each $\epsilon$, there is an $A_E$-homotopy between $a^\epsilon$ and the concatenation $h(a^\epsilon_B)\cdot a^\epsilon_K$ (this can be proved using Proposition \ref{hgeom}). Here $h(a^\epsilon_B)$ is the only horizontal $A_E$-path over $a^\epsilon_B$ that can be concatenated with $a^\epsilon_K$ that is, $h(a_B)$ has base path $s \to\phi^{a^\epsilon}_{s,0}(\gamma_K^\epsilon(1))$ which precisely coincides with $\phi_{s,1}^{a^\epsilon}(\gamma^{\epsilon}(1))$ by construction.

Thus we see that the homotopy condition we get in Theorem \ref{thm-2cocycle} relies on the representatives up to $A_E$-homotopy $h(a^\epsilon_B)\cdot a^\epsilon_K$ of $a^\epsilon$, rather than on $a^\epsilon$ itself.
\end{rem}

\subsection{The Monodromy Groupoid}\label{monodromy:grpd}  

We now turn to the study of the sequence of groupoids
\begin{equation}\label{exactgrpd}
1\to \G(\K)\xrightarrow{\tilde{\iota}} \G(A_E) \xrightarrow{\tilde{\pi}} \G(A_B)\to 1.
\end{equation}
obtained by integration (see \cite{MM}, \cite{MX}) of a Lie algebroid extension
\begin{equation}
  \K \stackrel{i}{\hookrightarrow} A_E \stackrel{\pi}{\twoheadrightarrow} A_B.
\end{equation}

Recall that $\tilde{\iota}$ and $\tilde{\pi}$ are defined at the level of paths, as follows:
$$\begin{array}{ccl}
\tilde{\iota}([a_K]_{\K})&:=&[i \circ a_K]_{A_E}, \\
\tilde{\pi}([a]_{A_E})&:=&[\pi\circ a]_{A_B},
\end{array}$$
for any $\tilde{a}_K\in P(\K)$, and $a\in P(A)$. Our aim in this section is to see why the sequence of groupoids (\ref{exactgrpd}) might not be exact in general. In other words, we want to explain the lack of exactness of the \emph{integration functor}. We start by showing that $\tilde{\pi}$ is surjective provided there is a complete Ehresmann connection.
\begin{prop}
 Let  $\K\hookrightarrow A_E \twoheadrightarrow A_B$ be a Lie algebroid extension that admits a complete Ehresmann connection.
 Then the sequence  of groupoids (\ref{exactgrpd}) is exact at $\G(A_B)$.
\end{prop}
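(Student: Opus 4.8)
The plan is to prove exactness at $\G(A_B)$, which amounts precisely to the surjectivity of $\tilde{\pi}$ (the kernel of the terminal map $\G(A_B)\to 1$ is all of $\G(A_B)$). Since $\tilde{\pi}$ is defined on homotopy classes by $\tilde{\pi}([a]_{A_E})=[\pi\circ a]_{A_B}$, it suffices to lift every $A_B$-path to an $A_E$-path: given $a_B\in P(A_B)$ over $\gamma_B:I\to B$, I would produce $a\in P(A_E)$ with $\pi\circ a=a_B$, whence $\tilde{\pi}([a]_{A_E})=[a_B]_{A_B}$ and every class in $\G(A_B)$ is attained.

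To build such an $a$, I would first construct a base path on $E$. The splitting $A_E=\K\oplus H$ provides, for each $e\in E$ and each $v\in (A_B)_{p(e)}$, a unique horizontal lift $h_e(v)\in H_e$ with $\pi(h_e(v))=v$; this is the pointwise form of $h:\Gamma(A_B)\to\Gamma(H)$. Fix any $e_0\in p^{-1}(\gamma_B(0))$ and let $\gamma:I\to E$ be obtained by flowing $e_0$ along the horizontal lift of $a_B$, i.e. $\gamma$ is the base map of the parallel transport $\Phi^{a_B}$ applied to $e_0$. By Lemma \ref{projectableanchor} the relevant symbol vector field $\sharp_E\circ h(a_B)$ satisfies $p_*(\sharp_E\circ h(a_B))=\sharp_B(a_B)=\dot\gamma_B$, so $p\circ\gamma$ and $\gamma_B$ solve the same ODE on $B$ with the same initial value and therefore coincide; in particular $\gamma$ lies over $\gamma_B$. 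I would then set $a(t):=h_{\gamma(t)}(a_B(t))\in H_{\gamma(t)}\subset (A_E)_{\gamma(t)}$. This is smooth, projects correctly since $\pi\circ a=a_B$, and satisfies $\sharp_E(a(t))=\dot\gamma(t)$ by the very construction of $\gamma$ as an integral curve of $\sharp_E\circ h(a_B)$; hence $a$ is a genuine $A_E$-path over $\gamma$, and surjectivity of $\tilde{\pi}$ follows.

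The only point where something can fail is the existence of $\gamma$ on the whole interval $I$: a priori the integral curve of the horizontal lift, equivalently the base map of parallel transport, is defined only for small time. This is exactly where completeness of the Ehresmann connection enters, since by definition it guarantees that $\Phi^{a_B}_{t,0}(\kappa)$, and thus its base map, is defined for all $t\in I$. The main obstacle is therefore conceptual rather than computational: recognizing that completeness is precisely the hypothesis that globalizes the horizontal lift. Equivalently, and more economically, I could simply invoke the homeomorphism $P(A_E)\cong P(A_B)\ltimes_B P(\K)$, $a\leftrightarrow(a_B,a_K)$, of Proposition \ref{splitpath} (itself a consequence of completeness): the projection onto the first factor is onto, since any $a_B$ may be paired with the constant zero $\K$-path at a point of $E_{\gamma_B(0)}$, and under this identification the first component is exactly $\pi\circ a$; hence $a\mapsto\pi\circ a$ surjects onto $P(A_B)$ and $\tilde{\pi}$ is surjective.
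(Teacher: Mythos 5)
Your proposal is correct and follows essentially the same route as the paper: lift $a_B$ horizontally, define the base path in $E$ by flowing a point $e_0\in p^{-1}(\gamma_B(0))$ along the (symbol of the) horizontal lift, set $a(t):=h_{\gamma(t)}(a_B(t))$, and invoke completeness to guarantee the lift exists for all $t\in I$. The paper phrases this by extending $a_B$ to a time-dependent section $\alpha_B$ and writing $a(t):=h(\alpha_B)_{\phi^{a_B}_{t,0}(e_0)}$, which coincides pointwise with your construction.
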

\begin{proof}
 We have to prove that $\tilde{\pi}$ is surjective. Given any $A_B$-path $a_B\in P(A_B)$ covering $\gamma_B$ in $B$, one extends $a_B$ into a time-dependant section  $\alpha_B$ of $A_B$, and consider it horizontal lift $h(\alpha_B)$. Pick an arbitrary point $e_0$ in the fiber over $\gamma_B(0)$, then clearly $a(t):=h(\alpha_B)_{\phi^{a_B}_{t,0}(e_0)}$ defines a $A_B$-path that projects onto $a_B$. Completeness of the Ehresmann connection insures us that $a$ is well defined for all $t\in I$.
\end{proof}
\begin{rem}
 We stress the fact that completeness is needed in order to ensure surjectivity, otherwise one might only get a quasi-surjective
 morphism in the sense that for any $x\in\G(A_B)$, there exists $g_1,\dots, g_n\in \G(A_E)$ such that the composition  $\tilde{\pi}(g_1)\cdots\tilde{\pi}(g_n)=x$.
 In general though, the $g_i$'s might not be chosen to be composable, as the following examples from \cite{Wein3} show.
\end{rem}
\begin{ex}\emph{(Stairway To Heaven).}
Consider $E=\bigl\{(x,y)\in \mathbb{R}^2,\  y\in \mathbb{Z},\  x\in]y-1,y+1[\ \bigr\}$, it is a smooth submanifold of $\mathbb{R}^2$, and the projection on the first factor is a submersion onto $B:=\mathbb{R}$. Thus one gets a well defined algebroid morphism $TE\to TB$, that comes with a unique Ehresmann connection. It is easily seen that the induced groupoid morphism is only quasi-surjective.
\end{ex}

\begin{ex}\emph{(Highway To Hell).}
Consider now on $E=\mathbb{R}^2$ the integrable regular foliation $\F\subset TE$ spanned by
$\partial x-\exp(y)\partial y$. Denote $p$ the projection on the first factor. The restriction of
$dp$ to the Lie algebroid $A_E:=\F$ defines a surjective Lie algebroid morphism onto $A_B:=TB\to
B:=\mathbb{R}$. Again, we have a (unique) Ehresmann connection which is not complete: $h(\partial
x)=\partial x-exp(y)\partial y$. Lie algebroid morphism ${\d p_1}_{|\F}$ only integrates into a
quasi-surjective Lie groupoid morphism.
\end{ex}

\begin{prop}
 Let  $\K\hookrightarrow A_E \twoheadrightarrow A_B$ be a Lie algebroid extension that admits a complete Ehresmann connection. Then the sequence  of groupoids (\ref{exactgrpd}) is exact at $\G(A_E)$.
\end{prop}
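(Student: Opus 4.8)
The plan is to prove exactness at $\G(A_E)$, i.e. $\ker\tilde\pi=\im\tilde\iota$ (where $\ker\tilde\pi$ means the classes sent by $\tilde\pi$ to a unit of $\G(A_B)$), by reading the statement off the explicit model of $\G(A_E)$ furnished by Theorem \ref{thm-2cocycle}. The inclusion $\im\tilde\iota\subseteq\ker\tilde\pi$ is immediate and I would dispatch it first: since $\K=\ker\pi$ we have $\pi\circ i=0$, so for any $\K$-path $a_K$,
$$\tilde\pi\bigl(\tilde\iota([a_K]_\K)\bigr)=[\pi\circ i\circ a_K]_{A_B}=[0]_{A_B},$$
which is a unit. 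The real content is the reverse inclusion.

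For that, I would identify $\G(A_E)$ with the quotient $P(A_B)\ltimes_B\G(\K)/\!\sim$ via Theorem \ref{thm-2cocycle}, so that a class in $\G(A_E)$ is represented by a pair $(a_B,g)$ with $a_B\in P(A_B)$ and $g\in\G(\K)$, the underlying $A_E$-path being recovered through the splitting of Proposition \ref{splitpath}. Under this identification $\tilde\pi$ becomes $[(a_B,g)]\mapsto[a_B]_{A_B}$, which is well defined precisely because equivalent pairs have $A_B$-homotopic base paths. Now take a class with $\tilde\pi([(a_B,g)])=[a_B]_{A_B}$ equal to a unit: by definition this says that $a_B$ is $A_B$-homotopic to a trivial path $0_x$, so I may fix an $A_B$-homotopy $h_B=a\,\d t+b\,\d\epsilon:TI^2\to A_B$ between $a_B$ and $0_x$.

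I then feed this $h_B$ into the equivalence relation of Theorem \ref{thm-2cocycle}. Setting $g':=\partial(h_B,\t(g))\cdot g\in\G(\K)$, the relation $(a_B,g)\sim(0_x,g')$ holds by construction, so $[(a_B,g)]=[(0_x,g')]$. A pair with trivial base path represents exactly an element of $\im\tilde\iota$: under the splitting, the $A_E$-path attached to $(0_x,g')$ has vanishing horizontal part (its base $A_B$-path being trivial), hence it takes values in $\K$ and equals $i$ applied to the $\K$-path representing $g'$. Therefore $[(a_B,g)]=[(0_x,g')]=\tilde\iota(g')$, exhibiting the given class in $\im\tilde\iota$ and closing the argument.

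The step requiring the most care — and where the machinery is genuinely used — is the composability of $\partial(h_B,\t(g))$ with $g$ in $\G(\K)$: the $\K$-path \eqref{partial:path} defining $\partial(h_B,\t(g))$ is set up so that its endpoints match the fiber data of $g$, which is exactly what makes the product $g'$ and the relation $(a_B,g)\sim(0_x,g')$ legitimate. I would also verify that one can genuinely reach the trivial path $0_x$ (not merely something homotopic to it), choosing $h_B$ to terminate at the constant path and using naturality of $\partial$ in the endpoint; completeness of the connection, assumed throughout this section, guarantees that the parallel transports and the holonomy entering \eqref{partial:path} are all globally defined, so the whole construction goes through without local obstructions.
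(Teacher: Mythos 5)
Your proof is correct, but it takes a genuinely different route from the paper's. You deduce exactness at $\G(A_E)$ formally from the quotient model of Theorem \ref{thm-2cocycle}: a class in $\ker\tilde{\pi}$ is represented by a pair $(a_B,g)$ with $a_B$ homotopic to a trivial path $0_x$, and the equivalence relation of that theorem, applied with $g':=\partial(h_B,\t(g))\cdot g$, moves the representative to $(0_x,g')$, which is visibly in $\im\ \tilde{\iota}$. The composability and base-point checks you flag are indeed the ones that matter, and they do go through: since $a_B$ is homotopic to $0_x$, every path $a_B^\epsilon$ in the homotopy has both endpoints at $x$, so the holonomies $\phi_{a_B^\epsilon}$ preserve the fiber $E_x$, the source of $\partial(h_B,\t(g))$ is $\t(g)$, and $g'$ lies over $x$ as required. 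The paper instead argues directly at the level of paths, without invoking Theorem \ref{thm-2cocycle}: it splits a representing $A_E$-path as $(a_K,a_B)$ via Proposition \ref{splitpath}, extends the $\epsilon$-component $b_B$ of the $A_B$-homotopy to time-dependent sections $\beta_B$ vanishing at $t=0,1$, and solves the evolution equation $\frac{d\alpha}{d\epsilon}-\frac{dh(\beta_B)}{dt}=[\alpha,h(\beta_B)]$ with initial condition $\alpha$; this horizontal lift of the homotopy produces an explicit $A_E$-homotopy from $a$ to a path of the form $(a_K^1,0)$, i.e.\ a $\K$-path. So the paper's argument is logically lighter, using only the splitting and the homotopy-generation mechanism behind Proposition \ref{hgeom}, whereas yours leans on the heavier classification theorem; in exchange, your argument identifies the $\K$-component of the new representative explicitly, as $\partial(h_B,\t(g))\cdot g$, which is exactly the correction term whose failure to vanish is measured by the monodromy groupoid in the following subsection. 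Both arguments use completeness in the same way, to guarantee that the splitting and the holonomies are globally defined.
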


\begin{proof}
By definition, elements in $\ker\ \tilde{\pi}$ are represented by $A_E$-paths whose projection on $A_B$ is homotopic to a trivial path. In particular, an element of the form $i\circ a_K$ projects onto a trivial $A_B$-path since  $\pi\circ i\circ a_K=0$. Thus we have $\im\  \tilde{\iota}\subset \ker\ \tilde{\pi}$ in a (very) trivial way.

Conversely, consider any element of $\ker\ \tilde{\pi}$ represented by some $A_E$-path $a$, and extend $a$ into a time-dependant section $\alpha$ as prescribed in last section. With the same notations, we get an identification $a=(a_K,a_B)$ where, by assumption $a_B$ is homotopic to a trivial path by means of some homotopy $h_B=a_B\d t+b_B \d\epsilon:TI^2\to A_B$.

One extends $b_B$ into a family of time-dependant sections $\beta_B$ of $A_B$ such that
${\beta_B}_{|t=0,1}=0$. Thus, if we let $\alpha^\epsilon$ be the solution of the evolution equation
$\frac{d\alpha}{d\epsilon}-\frac{dh(\beta_B)}{dt}=[\alpha,h(\beta_B)]$ with initial condition
$\alpha^{\epsilon=0}:=\alpha$, we get a $A_E$-homotopy $(a_K^\epsilon,a_B^\epsilon)$. By
construction $(a_K^1,a_B^1)=(a_K^1,0)$ which clearly represents an element in $\im \
\tilde{\iota}$. Thus we have shown that any element of $\ker\ \tilde{\pi}$ is
represented by a $A_E$-path of the form $(a_K,0_x)$, thus it is an element of $\im\ \tilde{\iota}$.
\end{proof}

We now see that, when there is a complete connection, the lack of exactness in the sequence \ref{exactgrpd} can only occur at $\G(\K)$. This means that $\G(\K)$ might not inject into $\G(A_E)$. In order to measure this, we introduce the following.
\begin{defn}
 We will call {\bf monodromy groupoid} the kernel of $\tilde{\iota}$, and denote it $\mathcal{M}$.
\end{defn}
We obtain by construction an exact sequence of groupoids:
\begin{equation}\label{exactmonodromy}
1\xrightarrow{} \M \xrightarrow{} \G(\K) \xrightarrow{} \ker\ \tilde{\pi} \xrightarrow{} 1.
\end{equation}

The next theorem explains how $\M$ is involved in the homotopy theories of $A_E$, $A_B$ and $\K$.
 In fact, it is the image of a connecting morphism $\partial_{2}:\pi_2(A_B)\ltimes_{B} E \to \G(\K)$,
 where $\pi_2(A_B)$ denote the homotopy classes of spheres in $A_B$:
\begin{thm} Let  $\K\hookrightarrow A_E \twoheadrightarrow A_B$ be a Lie algebroid extension that admits a complete Ehresmann connection. Then there exists a homomorphism
$$\partial_{2}:\pi_2(A_B)\ltimes_{B} E\to \G(\K),$$
that makes the following sequence exact:
$$ \cdots \to \pi_2(A_B)\ltimes E\xrightarrow{\partial_2} \G(\K)\xrightarrow{\tilde{\iota}} \G(A_E)\twoheadrightarrow \G(A_B)$$
\end{thm}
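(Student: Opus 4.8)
The plan is to build the connecting homomorphism $\partial_2$ directly from the formula for $\partial(h_B,x_0)$ appearing in Theorem \ref{thm-2cocycle}, and then verify exactness at $\G(\K)$ by identifying its image with the monodromy groupoid $\M$. The key observation is that the operator $\partial$ measures precisely the failure of a closed $A_B$-homotopy to lift to a $\K$-trivial $A_E$-homotopy, so evaluating it on spheres (rather than on homotopies between distinct paths) should produce exactly the obstruction we want.

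\medskip

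First I would make sense of the domain. A class in $\pi_2(A_B)$ over a point $b\in B$ is represented by an $A_B$-homotopy $h_B=a\,\d t+b\,\d\epsilon:TI^2\to A_B$ whose boundary is the trivial path $0_b$ (a sphere, in the sense of Appendix \ref{app:spheres}); together with a choice of point $x_0\in E_b$ this gives an element of $\pi_2(A_B)\ltimes_B E$. I would then simply \emph{define}
$$\partial_2([h_B],x_0):=\partial(h_B,x_0)\in\G(\K),$$
using the $\K$-path \eqref{partial:path}. The first task is to check this is well defined, i.e.\ independent of the representative $h_B$ of the spherical class: two such representatives differ by a higher $A_B$-homotopy, and the remark following Theorem \ref{thm-2cocycle} (that $\partial$ is compatible with concatenation of homotopies, $\partial(\tilde h\cdot h)=\partial\tilde h\cdot\partial h$) should reduce this to showing $\partial$ vanishes on degenerate spheres. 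The second task is that $\partial_2$ is a homomorphism, which follows from this same concatenation property of $\partial$, now read as an identity in the group $\pi_2(A_B)$ over the fixed base point.

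\medskip

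Next I would establish exactness at $\G(\K)$, which is the heart of the theorem. By definition $\M=\ker\tilde\iota$, so I must show $\im\,\partial_2=\ker\tilde\iota$. An element $[a_K]_\K\in\G(\K)$ lies in $\ker\tilde\iota$ iff the $A_E$-path $i\circ a_K$ is $A_E$-homotopic to a trivial path; using the splitting of Proposition \ref{splitpath} and the homotopy characterization of Proposition \ref{prop:splithomotopy}, such an $A_E$-homotopy projects to an $A_B$-homotopy $h_B$ from the constant path $0_b$ to itself — that is, a sphere — and equation \eqref{splitcondition} forces $[a_K]_\K=\partial(h_B,x_0)$. Conversely, any $\partial(h_B,x_0)$ arising from a sphere $h_B$ is killed by $\tilde\iota$ because the homotopy realizing \eqref{splitcondition} exhibits $i\circ a_K$ as $A_E$-homotopic to a trivial $A_E$-path. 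Combined with the two preceding propositions (exactness at $\G(A_E)$ and surjectivity of $\tilde\pi$ onto $\G(A_B)$ under completeness), this yields the full exact sequence
$$\pi_2(A_B)\ltimes E\xrightarrow{\partial_2}\G(\K)\xrightarrow{\tilde\iota}\G(A_E)\twoheadrightarrow\G(A_B).$$

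\medskip

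\textbf{The main obstacle} I anticipate is the well-definedness of $\partial_2$ on homotopy classes of spheres, together with the base-point bookkeeping in the semidirect product $\pi_2(A_B)\ltimes_B E$. The formula \eqref{partial:path} is written along the shifted base path $\gamma_K^\epsilon=\phi^{-1}_{a_B^\epsilon}\circ\phi_{a_B^0}(x_0)$, so one must track carefully how the holonomy $\Phi^{a_B^\epsilon}_{0,s}$ moves the chosen point $x_0$ and verify that changing the spherical representative by a higher homotopy changes the resulting $\K$-path only within its $\K$-homotopy class. I expect this to reduce, via Proposition \ref{hgeom} and the $\D$-closedness $\partial_H\omega=0$ of the curvature $2$-form \eqref{closedcurvatureform}, to a Stokes-type argument showing the integral \eqref{partial:path} depends only on the boundary data of the sphere; the completeness hypothesis is what guarantees all the parallel transports involved are globally defined so that these manipulations are legitimate.
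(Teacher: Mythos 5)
Your definition of $\partial_2$ and your exactness argument follow the paper's own route: the paper constructs $\partial_2(h_B,x_0)$ by solving an evolution equation along the horizontal lift of the sphere, and its remark after the theorem notes that this operator coincides with $\partial$ of Theorem \ref{thm-2cocycle} applied to $A_B$-spheres --- which is exactly your definition; and the paper, like you, obtains $\im\,\partial_2=\ker\tilde{\iota}=\M$ by a direct application of Theorem \ref{thm-2cocycle} (the two inclusions you spell out via Propositions \ref{splitpath} and \ref{prop:splithomotopy} are just the unpacking of the equivalence relation there).

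The genuine gap is the well-definedness of $\partial_2$ on $\pi_2(A_B)$, which is the technical heart of the paper's proof and which your reduction does not actually handle. The concatenation identity $\partial(\tilde{h}\cdot h)=\partial\tilde{h}\cdot\partial h$ reduces homotopy invariance to the statement that $\partial$ kills every \emph{null-homotopic} sphere; but a null-homotopic sphere is not a degenerate one --- it is an arbitrary morphism $TI^2\to A_B$ that merely admits a filling $TI^3\to A_B$ --- so this ``reduction'' restates the problem rather than solving it. Theorem \ref{thm-2cocycle} cannot supply the missing step either: transitivity of the equivalence relation only yields that $\partial(h_B,x_0)$ and $\partial(h_B',x_0)$ differ by $\partial$ of some third sphere, not that they coincide. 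Finally, the Stokes-type argument via $\partial_H\omega=0$ is literally available only in the abelian/central case, where $\partial(h_B)=\int_{h_B}\omega$; in general the integrand in \eqref{partial:path} is twisted by the parallel transports $(\Phi^{a_B^\epsilon}_{0,s})_*$ and takes values in a non-abelian $\K$, and equality of the resulting paths is only asserted up to $\K$-homotopy, so no direct Stokes argument applies. What is needed --- and what the paper provides --- is the non-abelian replacement: given a homotopy of spheres $s_B^u$, extend $b_B^u$ to sections $\beta_B^u$ vanishing at $t=0,1$, solve the evolution equation $\frac{d\theta}{d\epsilon}-\frac{d h(\beta_B)}{du}=[\theta,h(\beta_B)]_{A_E}$ with $\theta|_{\epsilon=0}=0$, and invoke Lemma \ref{spherelemma} to conclude that $\theta$ also satisfies the $t$-direction evolution equation and vanishes at $t=0,1$, hence furnishes the required $\K$-homotopy between the two boundary $\K$-paths. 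Without this argument your $\partial_2$ is defined only on spheres, not on homotopy classes, and your homomorphism claim (which rests on the same concatenation identity) inherits the gap.
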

\begin{proof}
The boundary map $\partial_2$ can be constructed directly as follows: consider an element $x_0$ in $E$, $b_0$ its projection onto $B$, and $s_B$ a $A_B$-sphere based at $b_0$ i.e. an algebroid morphism $s_B=a_B\d t+b_B \d\epsilon:I\times I\to B$ satisfying ${a_B}_{|\epsilon=0,1}=0$, ${b_B}_{|t=0,1}=0$ (see the appendix \ref{app:spheres}).

Choose any  family $\beta^t_B$  of time-dependant sections of $A_B$ extending $b_B$, chosen such that $\beta_B^{t=0}=\beta_B^{t=1}=0$. Thus, the unique solution $\alpha_B$ of the evolution equation $[h(\beta_B),\alpha]_{A_E}=\frac{d}{d t}h(\beta_B)-\frac{d}{d\epsilon}\alpha$ with initial condition $\alpha^{\epsilon=0}=0$ induces at time $\epsilon=1$ a $\K$-path starting at $x_0$, which is homotopic (as a $A_E$-path) to the trivial path $0_{x_0}$ by construction. It is easily seen using the construction in the proof of Proposition \ref{prop:splithomotopy} that $a^1$ is a $\K$-path.

One defines $\partial_2(h_B, x_0)$ to be the $\K$-homotopy class of $a^1$. Clearly, $\partial_2(h_B,x_0)$ has source and target $x_0$, so it is an element in the isotropy $\G(\K)_{x_0}$.

We now show that $\partial(h_B,x_0)$ only depends on the homotopy class of $s_B$. Consider an
homotopy of $A_B$-spheres $s_B^u:TI^2\to A_B, u\in I$ between two spheres $s_B^{u=0}$ and
$s_B^{u=1}$; we just do the same construction as above with $u$ as a parameter: we extend $b_B^u$
into a smooth family $\beta_B^{u}$ of sections of $A_B$ vanishing whenever $t=0,1$ and we let
$\alpha^{\epsilon,u}(t)$ be the unique solution of
$$\begin{array}{ccc}
\frac{d}{dt}\alpha-\frac{d}{d\epsilon}h(\beta_B)=\left[h(\beta_B),\alpha\right]_{A_E},\\
\end{array}$$
 with initial condition $\alpha_{|\epsilon=0}=0$. We let $a_K^{u,\epsilon=1}$ the corresponding $\K$-path.

We now want to define a $\K$-homotopy between $a_K^{u=0,\epsilon=1}$ and $a_K^{u=1,\epsilon=1}$.
For this, we consider the unique solution of the evolution equation:
$$ \frac{d \theta}{d\epsilon}-\frac{dh(\beta_B)}{du}=[\theta,h(\beta_B)]_{A_E},$$
with initial condition $\theta_{|\epsilon=0}=0$. We know from the Lemma \ref{spherelemma} that ${\theta}$ also satisfies:
$$ \frac{d \theta}{dt}-\frac{d\alpha}{du}=[\theta,\alpha]_{A_E}.$$
and that $\theta_{|t=0,1}=0$ so it gives the desired homotopy.

To complete the proof, one only needs to apply theorem \ref{thm-2cocycle} to see that every element in $\M$ is obtained this way.
\end{proof}

\begin{rem}
 The operator defined above clearly coincides with the one defined in Theorem \ref{thm-2cocycle} applied to $A$-spheres. In fact, the arguments in the proof above show that $\partial(h)=\partial(\hat{h})$ provided $h$ and $\hat{h}$ are equivalent homotopies between two fixed $A_B$-paths $a^0_B$ and $a_B^1$.
 By equivalent homotopies, we mean by definition that there exists a Lie algebroid morphism  $h=a\d t+b \d\epsilon+c\d u:TI^3\to A$ with
 $a_{|u=0}\d t+b_{|u=0}\d\epsilon=h$, $a_{|u=1}\d t +b_{|u=1}\d\epsilon=\hat{h}$ and satisfying $c=0$ whenever $\epsilon$ or $t$ is in $\{0,1\}$.
\end{rem}

\subsection{Integrability}
The analogy between the monodromy groupoid described above and the monodromy groups
 of a Lie algebroid as constructed in \cite{CrFe2} is clear. In fact it is easily seen that they do coincide in the case of Atiyah exact sequences. Also, when $E$ is a tubular
neighborhood of an orbit $B$ of $A_E$ (see Example \ref{neighborhood}), the restriction of the
monodromy groupoid $\M$ to $B$ coincides with the monodromy (bundle of) groups along $B$ by
construction.

Just like monodromy groups control the integrability of a Lie algebroid (\cite{CrFe2}), in the case of clean extensions (see the Definition \ref{cleanextensions}
) the monodromy
groupoid controls the integrability of $A_E$.

\begin{defn}\label{discreteness} we will say that $\M$ is discrete near identities if, for any sequence $(g_n)_n\subset \M$ converging
to an identity $\textbf{1}_x$ in $\G(\K)$, we have $g_n=\textbf{1}_{x_n}$ for $n$ big enough.
\end{defn}

\begin{thm} Let $\K\hookrightarrow A_E \twoheadrightarrow A_B$ be a clean Lie algebroid extension.
Assume that it admits a complete Ehresmann connection and that both $\K$ and $A_B$ are
integrable Lie algebroids.

Then $A_E$ is integrable if $\mathcal{M}$ is discrete near identities in $\G(\K)$.
\end{thm}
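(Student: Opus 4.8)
The plan is to prove integrability by exhibiting a smooth structure on the topological groupoid $\G(A_E)$ described in Theorem \ref{thm-2cocycle}, since $A_E$ is integrable precisely when $\G(A_E)$ is a Lie groupoid. The exactness results already established for \eqref{exactgrpd}, together with the definition $\M=\ker\tilde{\iota}$ from \eqref{exactmonodromy}, present $\G(A_E)$ as a topological extension
$$1\to\G(\K)/\M\to\G(A_E)\xrightarrow{\tilde{\pi}}\G(A_B)\to 1,$$
so it suffices to realize this as an extension of \emph{Lie} groupoids.

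First I would assemble the three pieces. Since $\K$ is integrable, $\G(\K)$ is a Lie groupoid. Since $\M$ is discrete near identities in the sense of Definition \ref{discreteness}, it meets a neighbourhood of the unit section of $\G(\K)$ only along units; hence it is an embedded normal subgroupoid and the quotient $\G(\K)/\M$ is again a Lie groupoid, with $\G(\K)\to\G(\K)/\M$ a local diffeomorphism near the units. Integrability of $A_B$ provides the Lie groupoid $\G(A_B)$. The gluing data are supplied by the holonomy of the complete connection (subsection \ref{parallel}), which gives an action of $P(A_B)$ on $\G(\K)$; this action descends to $\G(\K)/\M$, and together with the equivalence relation of Theorem \ref{thm-2cocycle} it is exactly what produces the extension above once one passes to $A_B$-homotopy classes.

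Then I would glue, using cleanness to control dimensions. Restricting the groupoid sequences \eqref{exactgrpd}--\eqref{exactmonodromy} to the isotropy over a point $x\in E$ with $b=p(x)$ identifies $\G(A_E)^x_x$ as an extension of $\G(A_B)^b_b$ by $\G(\K)^x_x/\M_x$. The clean hypothesis (Definition \ref{cleanextensions}) makes this a genuine short exact sequence with matching dimensions: its infinitesimal shadow is the exact sequence of isotropy Lie algebras
$$\g^\K_x\hookrightarrow\g^E_x\twoheadrightarrow\g^B_b,$$
whose surjectivity on the right is precisely cleanness. Over a chart of $\G(A_B)$ one then trivializes the fibre $\G(\K)/\M$ by parallel transport, the transition cocycle being smooth because it is assembled from $\omega$ and the holonomy, both of which depend smoothly on $A_B$-paths. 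This yields a smooth atlas making $\G(A_E)$ a Lie groupoid integrating $A_E$.

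The hard part will be verifying that this topological extension is genuinely smooth near the units and across orbits whose dimension varies. Three inputs must be made to cooperate in a single atlas: cleanness forces $\dim\bigl(\G(\K)^x_x/\M_x\bigr)$ and $\dim\G(A_B)^b_b$ to vary compatibly, so that $\G(A_E)$ suffers no dimension jumps; discreteness of $\M$ guarantees that $\G(\K)/\M$ is Hausdorff with manifold fibres rather than a singular quotient; and completeness of the connection guarantees that the holonomy trivializations are defined for all of $I$, so the charts extend along $A_B$-directions. The genuinely delicate point is the passage through non-regular points, where the orbit dimension of $A_E$ drops and the three structures must still patch; this is exactly the place where the assumption that $\M$ does not accumulate at the identities is indispensable, and where I expect the main technical work to lie.
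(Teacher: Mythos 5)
Your route is genuinely different from the paper's, so let me first say what the paper actually does: it never constructs a smooth structure on $\G(A_E)$ at all. Instead it verifies the Crainic--Fernandes integrability criterion, in the form: $A$ is integrable if and only if every sequence $[v_n]\in\N^{A}_{x_n}$ of monodromy elements represented by \emph{constant central} paths $v_n\in\Z(\g^A_{x_n})$ converging to a trivial path satisfies $v_n=0_{x_n}$ for $n$ large. Given such a sequence for $A_E$, the paper projects it by $\pi$, uses integrability of $A_B$ to force $\pi(v_n)=0$ eventually, concludes that $v_n\in\Z(\g^\K_{x_n})$ with $[v_n]_\K\in\M$ converging to an identity, invokes discreteness of $\M$ to make these classes identities, and finally uses integrability of $\K$ to kill $v_n$. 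Each hypothesis enters at one precise step.

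The genuine gap in your proposal is the role of cleanness. In the paper's argument cleanness is indispensable: it produces the pointwise exact sequence of centers $\Z(\g^\K_x)\hookrightarrow\Z(\g^E_x)\twoheadrightarrow\Z(\g^B_{p(x)})$ of \eqref{exactcenters}, which is what guarantees that $\pi(v_n)$ is again \emph{central} in $\g^B_{y_n}$; without that, the criterion for the integrable algebroid $A_B$ (which only controls central representatives) cannot be applied to $\pi(v_n)$, and the whole chain of implications collapses. Your proof instead assigns cleanness the job of preventing ``dimension jumps,'' but that is not a real mechanism: the local models $U\times_B\bigl(\G(\K)/\M\bigr)$ implicit in your trivialization have constant dimension $\dim E+\rank A_E$ whether or not the extension is clean, and nothing in your outline would change if the hypothesis of Definition \ref{cleanextensions} were deleted. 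So as written you are silently claiming a strictly stronger theorem than the paper's; either that strengthening is justified (which would require completing all the details, and is doubtful, since the paper restricts to clean extensions precisely because its control of the isotropy of $A_E$ fails otherwise), or cleanness is secretly needed in one of the steps you leave as assertions. Those assertions are themselves the hard content: parallel transport does \emph{not} descend to $\G(A_B)$ (the paper stresses that holonomy does not factor through $A_B$-homotopies unless $\Curv_\D=0$), so ``trivializing over a chart of $\G(A_B)$'' requires a smooth local section of $P(A_B)\to\G(A_B)$, smooth families of homotopies comparing two such sections, and the observation that the resulting $\partial$-corrections from Theorem \ref{thm-2cocycle} are well defined only modulo $\M$; after that one must still check that the groupoid operations are smooth and that the Lie algebroid of the Lie groupoid so obtained is $A_E$. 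None of this is carried out. Two smaller inaccuracies: discreteness of $\M$ near identities does not give Hausdorffness of $\G(\K)/\M$ (these groupoids are typically non-Hausdorff, and Hausdorffness is not what is needed); and Definition \ref{discreteness} only controls sequences converging to \emph{identities}, so to conclude that $\G(\K)/\M$ is a Lie groupoid you must add the translation argument showing the $\M$-action is properly discontinuous at every point of $\G(\K)$, not just near the unit section.
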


\begin{proof}
For a generic Lie algebroid $A\to M$, we denote:
$$\N_x^A:=\{[g]\in \G(\g^A_x)\text{, g is }A\text{-homotopic to a trivial path}\},$$
where $g\in P(\g^A_x)$ and $x\in M$. According to \cite{CrFe2} we have an exact sequence:
$$ \N_x^A\hookrightarrow \G(\g^A_x) \twoheadrightarrow \G(A)_x^0. $$
Moreover, any element in $\N_x(A)$ can be represented by an element in $\Z(\g^A_x)$,
seen as a constant path (here, $\Z(\ker {\sharp_A}_x)$ denotes the center of the isotropy algebra $\g^A_x$ at $x$).
With these settings, integrability is equivalent to the following condition:
\renewcommand{\labelitemi}{}
\begin{itemize}
\item \emph{For any sequence $([v_n])$ in $\N^{A}_{x_n}$ represented by constant paths $v_n \in
Z(\ker{\sharp_{A}}_{x_n})$ that converges to a trivial path $0_{x}$ in $P(A)$, one necessarily has
 $v_n=0_{x_n}$ for $n$ big enough.}
\end{itemize}
So let $([v_n])\subset \N^{A_E}_{x_n}$ be a sequence where $v_n\in \Z(\g^E_{x_n})$ converges to a trivial path:
 $v_n\to 0_x$ for some $x\in E$. For each $n$, one can draw the following commutative diagram:
$$\xymatrix{\N^{\K}_{x_n} \ar@{^{(}->}[r]     & \G(\g^\K_{x_n})\ar[d]^{\tilde{\iota}}  \ar@{->>}[r]   &\G(\K)_{x_n}^0  \ar[d]  \\
            \N^{A_E}_{x_n}\ar@{^{(}->}[r]   & \G(\g^E_{x_n}) \ar[d]^{\tilde{\pi}}\ar@{->>}[r]   &\G(A_E)_{x_n}^0 \ar[d] \\
            \N^{A_B}_{y_n}\ar@{^{(}->}[r]          & \G(\g^B_{y_n})                     \ar@{->>}[r]
            &\G(A_B)_{y_n}^0,
}$$ where $y_n:=p(x_n)\in B$. As explained above all lines are exact. The middle column is not
exact but we still have $\im\ \tilde{\iota}= \ker \tilde{\pi}$.

Now consider the sequence of $A_B$-paths $\pi(v_n)$ in $\g^B_{y_n}$. The restriction of
$\pi$ to $\Z(\g^E_{x_n})$ is easily seen to induce an exact sequence
\begin{equation}\label{exactcenters} \Z(\g^{\K}_{x_n})\hookrightarrow \Z(\g^E_{x_n})
\twoheadrightarrow \Z(\g^B_{y_n}).\end{equation} Thus
$\tilde{\pi}([v_n])=[\pi(v_n)]$ is a sequence in $\N^{A_B}_{x_n}$ with $\pi(v_n)\in
\Z(\ker{\sharp_{A_B}}_{y_n})$ converging to $0_{y_n}$. Since $A_B$ is integrable, this implies
that $\pi(v_n)=0_{y_n}$ for $n$ big enough. So we see in \eqref{exactcenters} that $v_n\in
\Z(\g^\K_{x_n})$ and converges to $0_{x_n}$. By construction, we have $[v_n]_\K\in \M$
and converges to the identity $\textbf{1}_x$ in $\G(\K)$. The discreteness assumption on $\M$
  implies that $[v_n]_\K=\textbf{1}_{x_n}$ for $n$ big enough.
 Thus $[v_n]\in  \N^{\K}_{x_n}$ with $v_n\in \Z(\g^{\K}_{x_n})$ converging to $0_{x}$.
 We can conclude that $v_n=0_{x_n}$ for $n$ big enough because $\K$ is integrable.
\end{proof}

\appendix

\section{\texorpdfstring{The Weinstein groupoid and $A$-homotopies}{The Weinstein groupoid and A-homotopies}}
Let $A\to M$ be a Lie algebroid. We recall the construction of the Weinstein groupoid $\G(A)$ (see \cite{CrFe2} or \cite{CaFe} for an alternative approach in the Poisson case).
This is a topological groupoid, with source 1-connected fibers, which morally integrates $A$.
Indeed, $A$ is integrable if and only if $\G(A)$ is smooth and in this case $A(\G(A))$ is
canonically isomorphic to $A$.

We will denote by $P(A)$ the space of $A$-paths (up to reparametrization). By setting
$s(a)=p_A\circ a(0)$ and $t(a)=p_A\circ a(1)$, we shall think of $P(A)\tto M$ as an infinite
dimensional groupoid with multiplication given by concatenation (though units are not well
defined). On the groupoid $P(A)$ there is an equivalence relation $\sim$, called $A$-homotopy,
which preserves products, and one sets:
\[ \G(A):=P(A)/\sim . \]
The homotopy class of an $A$-path $a$ will be denoted $[a]_A$ or $[a]$ when no confusion seems possible.
Let us recall how $A$-homotopies are defined since this will be essential later. Suppose we
are given $\alpha^\epsilon$(t), a time dependent family of sections of $A$ depending on a parameter $\epsilon\in I:=[0,1]$, and $\beta^0(\epsilon)$ a time dependent section of $A$.
Then there exists a unique solution $\beta=\beta^t(\epsilon)$ of the following
\emph{evolution equation}:
\begin{equation}
\label{evolution}
\frac{d\alpha}{d\epsilon}-\frac{d\beta}{dt}=[\alpha,\beta],
\end{equation}
with initial condition $\beta^0(\epsilon)$. In fact, it is easily checked that the following integral formula provides a solution:
\begin{equation}
\label{hintegral}\beta^t(\epsilon):=\int_0^t(\psi^{\alpha^\epsilon}_{t,s})_*(\frac{d}{d\epsilon}\alpha^\epsilon(s))ds+(\psi_{t,0}^{\alpha^\epsilon})_*(\beta^0(\epsilon)).
\end{equation}
Here $\psi^{\alpha^\epsilon}$ denotes the flow of the time-dependant linear vector field on $A$, associated to the derivation $[\alpha^\epsilon,-]$ of sections of $A$ (see the appendix in \cite{CrFe2} for more details). We emphasize the use of the indices and parameters in the notation: we think of $\alpha$ as an $\epsilon$-family of $t$-time dependent sections of $A$, while we think of $\beta$ as a $t$-family of $\epsilon$-time dependent sections of $A$ (see why below).

The notion of $A$-homotopy is defined as follows. A family $a^\epsilon:I\mapsto A,\ \epsilon\in I$ of $A$-paths, over $\gamma^\epsilon:I\mapsto B$ is called a \emph{homotopy}
if $\gamma_\epsilon(0)$ in independent of $\epsilon$, and if the unique solution $\beta$ of equation (\ref{evolution}) with initial condition $\beta^0(\epsilon)=0$ satisfies:
\begin{equation}\label{hcondition}
\beta^1(\epsilon)_{\gamma^\epsilon(1)}=0, \forall \epsilon\in I.
\end{equation}
Here, $\alpha^\epsilon$ denotes any family of time-dependant sections of $A$ {\emph extending} $a$, that is, such that $\alpha^\epsilon_{\gamma^\epsilon(t)}(t)=a^\epsilon(t)$. One checks
that this definition is independent of the choice of $\alpha$ (see \cite{CrFe2}). We will refer to (\ref{hcondition}) as the {\emph homotopy condition}. Note that, if some $\alpha^0$ is fixed, and we are given $\beta$ with $\beta^0=\beta^1=0$, then equation (\ref{evolution}) can also be considered as an evolution equation for $\alpha$, which turns out to induce an homotopy.

The reason why we consider the evolution equation (\ref{evolution}) with non-vanishing initial condition is that this also leads to $A$-homotopies, as we now explain. Set $X=\sharp\alpha$ and $Y=\sharp\beta$, so that $X^\epsilon$ and $Y^t$ are families of time dependent vector fields on $M$ (which obviously satisfy an evolution equation in the algebroid $TM$). As the proof of next proposition shows, this forces their time-dependant flows $\phi^{X^\epsilon}_{t,0}, \phi^{Y^t}_{\epsilon,0}$ to be related as follows:
\begin{equation}\label{hflows}
\phi^{X^\epsilon}_{t,0}\circ\phi^{Y^0}_{\epsilon,0}=\phi^{Y^t}_{\epsilon,0}\circ\phi^{X^0}_{t,0}.
\end{equation}
In particular, if we denote by $\gamma^\epsilon(t)$ any of these two equivalent expressions applied to some $m_0\in M$, we obtain two families of $A$-paths $a^\epsilon$ and $b^t$, defined by $a^\epsilon(t):=\alpha^\epsilon(t)_{\gamma^\epsilon(t)}$ and $b^t:\epsilon\mapsto \beta^t(\epsilon)_{\gamma^\epsilon(t)}$. We have:

\begin{prop}
\label{hgeom}
For any couple $\alpha$ and $\beta$ satisfying the evolution equation (\ref{evolution}), the concatenations $a^1.b^0$ and $b^1.a^0$ defined above are homotopic $A$-paths.
\end{prop}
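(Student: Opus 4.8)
The plan is to read the evolution equation \eqref{evolution} as the integrability condition that turns the pair $(a,b)$ into a Lie algebroid morphism $TI^2\to A$, and then to obtain the homotopy by sliding one boundary path of the square $I^2$ onto the other. Write $a(t,\epsilon):=\alpha^\epsilon(t)_{\gamma^\epsilon(t)}$ and $b(t,\epsilon):=\beta^t(\epsilon)_{\gamma^\epsilon(t)}$, both covering the base map $(t,\epsilon)\mapsto\gamma^\epsilon(t)$. The four edges of this square are exactly the $A$-paths of the statement: along $\epsilon=0$ (resp.\ $\epsilon=1$) in the $t$-direction one reads off $a^0$ (resp.\ $a^1$), and along $t=0$ (resp.\ $t=1$) in the $\epsilon$-direction one reads off $b^0$ (resp.\ $b^1$). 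Thus $b^1\cdot a^0$ is the bottom-then-right route and $a^1\cdot b^0$ the left-then-top route from the corner $(0,0)$ to the corner $(1,1)$, and the whole point is that the filled square interpolates between them.

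First I would establish the flow relation \eqref{hflows}, which is what makes the base map $\gamma^\epsilon(t)$ well defined to begin with. Applying the anchor $\sharp$ to \eqref{evolution} and using that $\sharp$ is a Lie algebra homomorphism on sections (Definition \ref{defalgebroid}) gives the zero-curvature equation
\[
\frac{dX}{d\epsilon}-\frac{dY}{dt}=[X,Y],\qquad X=\sharp\alpha,\ Y=\sharp\beta.
\]
This is the flatness of the $TM$-valued ``connection'' with components $X$ in the $t$-direction and $Y$ in the $\epsilon$-direction, so parallel transport around a rectangle is trivial. Concretely, for fixed $\epsilon$ both $\phi^{X^\epsilon}_{t,0}\circ\phi^{Y^0}_{\epsilon,0}$ and $\phi^{Y^t}_{\epsilon,0}\circ\phi^{X^0}_{t,0}$ agree at $t=0$ and, using the zero-curvature equation, solve the same ODE in $t$; hence they coincide, which is \eqref{hflows}, and $\gamma^\epsilon(t)$ is unambiguous.

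With this in hand, $\Xi:=a\,\d t+b\,\d\epsilon$ is a Lie algebroid morphism $TI^2\to A$, the morphism condition being precisely \eqref{evolution} (the standard characterization, cf.\ \cite{CrFe2}). To conclude I would pull $\Xi$ back along a smooth map $F:I^2\to I^2$, with new coordinates $(\tau,s)$, chosen so that $F(0,s)\equiv(0,0)$ and $F(1,s)\equiv(1,1)$ for all $s$, while $\tau\mapsto F(\tau,0)$ sweeps the bottom-then-right $L$-path and $\tau\mapsto F(\tau,1)$ the left-then-top one; such an $F$ exists because the two $L$-paths join the same endpoints inside the contractible square. Then $\Xi\circ dF:TI^2\to A$ is again a Lie algebroid morphism, say $\hat a\,\d\tau+\hat b\,\d s$, whose $s=0$ slice in $\tau$ is $b^1\cdot a^0$ and whose $s=1$ slice is $a^1\cdot b^0$. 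Since $F$ is constant in $s$ along $\tau=0$ and along $\tau=1$, the component $\hat b$ vanishes on both: $\hat b|_{\tau=0}=0$ is the initial condition in the definition of $A$-homotopy, and $\hat b|_{\tau=1}=0$ is exactly the homotopy condition \eqref{hcondition}. Hence $s\mapsto(\text{slice at }s)$ is the desired $A$-homotopy from $b^1\cdot a^0$ to $a^1\cdot b^0$.

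The main obstacle is bookkeeping rather than anything conceptual. One must verify that the boundary edges of $\Xi$, after the reparametrization built into the standard concatenation of $A$-paths (needed to make the junction smooth, cf.\ \cite{CrFe2}), really are $b^1\cdot a^0$ and $a^1\cdot b^0$, and that $F$ can be taken smooth with the prescribed constant behaviour near $\tau=0,1$ so that $\Xi\circ dF$ is a genuine morphism meeting \eqref{hcondition}. Once \eqref{hflows} is secured and the edges are correctly matched, translating ``homotopic boundary paths of a contractible square'' into the analytic homotopy condition is routine; in more elementary terms one can instead feed $\alpha,\beta$ and the relation \eqref{hflows} directly into the integral formula \eqref{hintegral} to exhibit the interpolating family explicitly.
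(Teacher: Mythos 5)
Your proof is correct, and it is essentially the argument the paper relegates to its closing paragraph for the non-integrable case: view $a\,\d t+b\,\d\epsilon$ as a Lie algebroid morphism $TI^2\to A$ and compose with $dF$ (the paper's $dh$) for a homotopy $F$ of the square rel the two corner-to-corner L-paths, so that the slices of $\Xi\circ dF$ give the required $A$-homotopy, with the vanishing of the $\d s$-component at $\tau=0,1$ delivering the initial condition and the homotopy condition \eqref{hcondition}. Where you differ is in what you treat as the main argument: the paper first assumes $A$ integrable and works on the source $1$-connected groupoid $\G(A)$, observing that \eqref{evolution} says the right-invariant vector fields $\ri{\alpha}^\epsilon+\partial t$ and $\ri{\beta}^t+\partial\epsilon$ commute on $\G(A)\times I^2$, so their flows fill in a square in an $\s$-fiber whose boundary consists of the two concatenations; this yields \eqref{hflows} by projection and gives the geometric picture, but only covers the integrable case, which is why the pullback argument is appended. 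Your version inverts this: you prove \eqref{hflows} directly by applying the anchor to \eqref{evolution} and running the zero-curvature/uniqueness-of-ODE argument on $M$ (legitimate, since $TM$ always integrates to the pair groupoid, or by the standard flat-connection computation), and then the morphism-pullback argument stands on its own with no integrability hypothesis. What you lose is the groupoid intuition; what you gain is a single self-contained proof of the general statement, with the boundary bookkeeping for $F$ made explicit rather than left implicit as in the paper. The smoothness-of-concatenation caveat you flag is real but is exactly the standard reparametrization device of \cite{CrFe2}, and the paper's own proof is no more detailed on this point.
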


\begin{proof}
Assume that $A$ is integrable and denote by $\G(A)$ the source 1-connected Lie groupoid integrating $A$. Recalling that the Lie algebra of sections of $A$ can be identified
with the Lie algebra of right invariant vector fields on $\G$, we denote by $\ri{\alpha}^\epsilon$ and $\ri{\beta}^t_R$ the (time dependent) right invariant vector fields on $\G(A)$ that correspond
to $\alpha^\epsilon$ and $\beta^t$, respectively. Then the evolution equation (\ref{evolution})
exactly means that $\ri{\alpha}^\epsilon+\partial t$ and $\ri{\beta}^t+\partial \epsilon$ commute,
when seen as vector fields on $\G(A)\times I\times I$. This means that their flows:
$$
\begin{array}{cccc}
 \phi_u^{\alpha+\partial t}(g,t,\epsilon)&=&(\phi^{\ri{\alpha}^\epsilon}_{t+u,t}(g),t+u,\epsilon)\\
 \phi_v^{\beta+\partial\epsilon}(g,t,\epsilon)&=&(\phi^{\ri{\beta}^t}_{\epsilon+v,\epsilon}(g),t,\epsilon+v),
\end{array}
$$
commute, so we find:
$$
\phi^{\ri{\alpha}^{\epsilon+v}}_{t+u,t}\circ\phi^{\ri{\beta}^t}_{\epsilon+v,\epsilon}(g)=\phi^{\ri{\beta}^{t+u}}_{\epsilon+v,\epsilon}\circ \phi^{\ri{\alpha}^{\epsilon}}_{t+u,t}(g).
$$
In particular, taking $t=\epsilon=0$ and then switching the roles of $u,v$ with the ones of $t,\epsilon$, one gets:
$$
\phi^{\ri{\alpha}^{\epsilon}}_{t,0}\circ\phi^{\ri{\beta}^0}_{\epsilon,0}(1_{m_0})=
\phi^{\ri{\beta}^{t}}_{\epsilon,0}\circ \phi^{\ri{\alpha}^{0}_R}_{t,0}(1_{m_0}).
$$
Now, observe that the path in $\G$ corresponding to the concatenation $a^1.b^0$ is
the concatenation of the following paths
\[
\epsilon\mapsto  \phi^{\ri{\beta}^0}_{\epsilon,0}({1}_{m_0}),\quad t\mapsto\phi^{\ri{\alpha}^{1}}_{t,0}\circ\phi^{\ri{\beta}^0}_{1,0}({1}_{m_0}),
\]
while the one corresponding to $b^1.a^0$ is the concatenation of the paths
\[
t\mapsto\phi^{\ri{\alpha}^{0}}_{t,0}({1}_{m_0}),\quad \epsilon\mapsto\phi^{\ri{\beta}^{1}}_{\epsilon,0}\circ \phi^{\ri{\alpha}^{0}}_{1,0}({1}_{m_0}).
\]
These are clearly homotopic (in the $\s$-fibers) since they define the boundary of the square:
\[
(t,\epsilon)\mapsto \phi^{\ri{\alpha}^{\epsilon}}_{t,0}\circ\phi^{\ri{\beta}^0}_{\epsilon,0}({1}_{m_0})=\phi^{\ri{\beta}^{t}}_{\epsilon,0}\circ \phi^{\ri{\alpha}^{0}}_{t,0}({1}_{m_0}).
\]
We conclude that $a^1.b^0$ and $b^1.a^0$ are homotopic as $A$-paths.

In the case $A$ is not integrable, the above proposition still holds. The argument goes as follows: $a\d t+b\d\epsilon:TI^2\to A$ is a Lie algebroid morphism (as can be checked in local coordinates). Let $h:I^2\to I^2$ be any homotopy between $u^1\cdot v^0$ and $v^1 \cdot u^0$, where $u^1\cdot v^0$ is the concatenation of $v:\epsilon\mapsto (0,\epsilon)$ with $u^1:t\to(t,1)$ and $v^1 \cdot u^0$ the concatenation of $u^0:t\to (t,0)$ with $v^1:\epsilon\to(1,\epsilon)$. Then the composition  $(a\d t+b\d\epsilon)\circ dh:TI^2\to A$ is a Lie algebroid morphism that defines a homotopy between $a^1\cdot b^0$ and $b^1\cdot a^0$.
\end{proof}

\begin{rem}
We can also interpret the proposition in terms of $A$-homotopies (when $b^0=0$):
it says that $b^1$ is a representative (up to $A$-homotopy) of $a^1.(a^0)^{-1}$.
Hence, $b^1$ is trivial if and only if $a^0$ is homotopic to $a^1$.
\end{rem}

\section{Fibered Lie groupoids}\label{fiberedliegroupoids}
We briefly explain basic notions about groupoids acting on groupoids, and how to form semi-direct products.
\begin {defn} Given a submersion $p:E\to B$, a \textbf{fibered Lie groupoid} is a Lie groupoid $\G_V\tto E$ over $E$ such that $p\circ\s=p\circ \t$.
\end {defn}
Clearly, the orbits of a fibered Lie groupoid lie in the fibers of $p$, thus the restrictions ${\G_V}_{|E_b}\tto E_b$ are Lie groupoids as well for any $b\in B$.

We will denote $p:\G_V\to B$ rather than $p\circ\s$ or $p\circ \t$ (there is no real confusion to be done here). It is a Lie groupoid morphism onto the groupoid $B$ (whose only arrows are unities) so one can actually think of $\G_V$ as an extension of $B$ with trivial kernel.

\begin{defn} We will call the \textbf{gauge groupoid}, denoted $\Gau(\G_V)\tto B$, the set of all groupoid isomorphisms $\Phi_{b_2,b_1}:{\G_V}_{|E_{b_1}}\to {\G_V}_{|E_{b_2}}$,
 with source and target $\s(\Phi_{b_2,b_1})=b_1,\t(\Phi_{b_2,b_1})=b_2$ and obvious identities and composition.
\end{defn}
Of course the gauge groupoid is not Lie in general, however it is a nice intermediary in order to define actions of a Lie groupoid onto an fibered one.
\begin{defn} A smooth \textbf{action} of a Lie groupoid $\G_B\tto B$ on a fibered Lie groupoid $p:\G_V\to B$ is a groupoid morphism $\Phi:\G_B\to \Gau(G_V)$ covering the identity and such that
 \begin{eqnarray*}\G_B\times_B\G_V&\to& \G_V\\ (a_B,a_V)&\to& \Phi_{a_B}(a_V)
 \end{eqnarray*}
is a smooth map. Here, of course  $\G_B\times_B\G_V:=\{(a_V,a_B)\in \G_B\times\G_V,\ \s({a_B})=p(a_V)\}$ with the fibered topology.
\end{defn}
 Given smooth action of a Lie groupoid $\G_B\tto B$ on a fibered Lie groupoid $p:\G_V\to B$ there is a natural structure of Lie groupoid on  $\G_B\times_B\G_V\to E$. Source and target are given by $\s(a_B,a_V)=\s(a_V)$, $\t(a_B,a_V)=\t(\Phi_{a_B}(a_V))$ and composition by:
$$(a^2_B,a^2_K)\cdot(a^1_B,a^1_V)=(a^2_B\cdot a^1_B,\Phi_{a^1_B}^{-1}(a^2_V)\cdot a^1_V).$$
We will leave the details to the reader. Just note that this is a bit different from the usual notion of action of a groupoid on a submersion $\G_V\to E$ since the resulting groupoid is over $E$ rather than $\G_V$.

\section{Spheres in a Lie algebroid}\label{app:spheres}

We want to extend the notion of homotopy groups to algebroids. In the case of an integrable
algebroid $A\to M$ these should coincide with the second homotopy groups of the $\s$-fibers, seen as a
completely intransitive groupoid over $M$, however one needs a construction which is independent
of integrability.

\begin{defn} For a Lie algebroid $A$, we define a \textbf{$A$-sphere} to be an algebroid morphism $s=a\d t+b\d\epsilon:TI^2\to A$ such that $a_{|\epsilon=0,1}$ and $b_{|t=0,1}$ vanish (\emph{i.e} $a:I^2\to A$  (resp. $b:I^2\to A$) vanishes whenever $t\in\{0,1\}$ (resp. $\epsilon\in\{0,1\}$). The space of all such spheres will be denoted $S^2(A)$.
\end{defn}
Clearly, a $A$-sphere $s:TI^2\to A$ covers a topological sphere in the base manifold, that is, the base map of $s$ is a map $\gamma:I^2\to M$ whose restriction to the boundary of $I^2$ is reduced to a point $x_0$. We will say that $s$ is based at $x_0$.

If $A$ is integrable, then one can use the reasoning of proposition \ref{hgeom} to see that any $A$-sphere integrates to a unique sphere lying in the $\s$-fiber over its base point, and based at an identity. This motivates the following definition of homotopy for $A$-spheres.

\begin{defn} A \textbf{homotopy} of $A$-spheres is a smooth family of $A$-spheres $s_u:TI^2\to A, u\in I$ based at the same point $x_0\in M$.
\end{defn}

\begin{defn}
Two spheres $s^0,s^1:TI^2\to A$ are said to be homotopic if there exists a homotopy of spheres $s^u:TI^2\to A$ such that $s_i=s^i$, for $i=0,1$ 
$h=a\d t+b\d\epsilon+c\d u\to A$ such that $a^{u=i}\d t+b^{u=i}\d\epsilon=s^i$, for $i=0,1$.
\end{defn}
We get this way an equivalence relation on the space of $A$-spheres.

\begin{defn} We call second homotopy group, denoted $\pi_2(A)$ the space of $A$-spheres up to homotopy. 
\end{defn}

The above definition for homotopy might not look so natural. Let us work a little bit and see why this notion is correct.

\begin{prop}Given a family (parametrized by $u\in I$) of spheres $s^u=a^u\d t+b^u\d\epsilon:TI^2\to A$ based at a same point,
 there exists a unique $c:I^3\to A$ such that  $h:a\d t+b\d\epsilon+c\d u:TI^3\to A$ is an algebroid morphism and $c_{|t,\epsilon=0}=0$. Moreover, $c_{|t,\epsilon=1}=0$
\end{prop}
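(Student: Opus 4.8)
The plan is to read off the Lie algebroid morphism condition for $h=a\,\d t+b\,\d\epsilon+c\,\d u$ as three equations, one for each pair of coordinate directions of $I^3$, each being an instance of the evolution equation \eqref{evolution}. Extending $a,b,c$ to the appropriate parameter‑dependent time‑dependent sections of $A$ as in Appendix A, these read
\[
\frac{da}{d\epsilon}-\frac{db}{dt}=[a,b],\qquad
\frac{da}{du}-\frac{dc}{dt}=[a,c],\qquad
\frac{db}{du}-\frac{dc}{d\epsilon}=[b,c].
\]
The first holds for every $u$ precisely because each $s^u$ is an $A$-sphere, and it carries the anchor data $\sharp a=\partial_t\gamma$, $\sharp b=\partial_\epsilon\gamma$ for the common base map $\gamma(t,\epsilon,u):=\gamma^u(t,\epsilon)$. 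I would then \emph{define} $c$ as the unique solution of the middle equation, viewed as an instance of \eqref{evolution} (with the parameter $\epsilon$ there replaced by $u$, $\alpha$ by $a$ and $\beta$ by $c$), evolving in $t$ from the initial condition $c|_{t=0}=0$; existence and uniqueness of such a $c$ is exactly the content of \eqref{evolution}. Since any admissible $c$ must satisfy this middle equation together with $c|_{t=0}=0$, the same observation settles the uniqueness asserted in the statement.

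The heart of the proof is to check that this $c$ automatically satisfies the third equation, so that $h$ really is a morphism. Set $G:=\frac{db}{du}-\frac{dc}{d\epsilon}-[b,c]$; the goal is $G\equiv 0$. Differentiating $G$ in $t$, commuting mixed partials, and substituting the first two structure equations to eliminate $\frac{db}{dt}$ and $\frac{dc}{dt}$, all terms not of the form $[a,\cdot]$ cancel in pairs, while the two bracket‑of‑brackets terms $[[a,b],c]+[b,[a,c]]$ collapse to $[a,[b,c]]$ by the Jacobi identity (\ref{Jacobi}). The upshot is the linear homogeneous ODE
\[
\frac{dG}{dt}=-[a,G].
\]
At $t=0$ one has $c=0$ for all $\epsilon,u$, hence $\frac{dc}{d\epsilon}$ and $[b,c]$ vanish there, while $\frac{db}{du}=0$ at $t=0$ because $b|_{t=0}=0$ for a sphere; thus $G|_{t=0}=0$, and uniqueness of solutions forces $G\equiv 0$. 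This Jacobi‑driven cancellation is the step I expect to be the main obstacle, since it is the only place where the algebroid structure (rather than bare linear algebra) is genuinely used, and the bookkeeping of signs is delicate.

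It remains to establish the boundary behaviour. Restricting the middle equation to the faces $\{\epsilon=0\}$ and $\{\epsilon=1\}$, where $a$ vanishes by the sphere condition $a|_{\epsilon=0,1}=0$ (so $\frac{da}{du}=0$ there as well), gives $\frac{dc}{dt}=0$ on these faces; combined with $c|_{t=0}=0$ this yields $c|_{\epsilon=0}=c|_{\epsilon=1}=0$, which in particular completes the required initial condition $c|_{t,\epsilon=0}=0$. Finally, restricting the now‑validated third equation to the face $\{t=1\}$, where $b|_{t=1}=0$ (hence $\frac{db}{du}=0$), gives $\frac{dc}{d\epsilon}=0$ there; since $c|_{t=1,\epsilon=0}=0$ by the previous step, integrating in $\epsilon$ gives $c|_{t=1}=0$, which is the ``moreover''. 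The anchor compatibility $\sharp c=\partial_u\gamma$ is automatic from the standard theory of the evolution equation, exactly as in the proof of Proposition \ref{hgeom}, where the symbols are seen to satisfy the corresponding equation in $TM$, so no separate argument is needed.
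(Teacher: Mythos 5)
Your proof is correct and is essentially the paper's own argument with the roles of $t$ and $\epsilon$ (equivalently of $a$ and $b$) interchanged: the paper instead defines $c$ by solving the $(\epsilon,u)$-equation driven by $b$ with initial condition $c_{|\epsilon=0}=0$, and your $G$-computation is precisely its Lemma \ref{spherelemma}, where the defect of the remaining equation is shown, via the same Jacobi cancellation, to satisfy a linear homogeneous evolution equation with vanishing initial data and hence to vanish identically. The only divergence is at the boundary: the paper obtains the last face ($\epsilon=1$ there, $t=1$ for you) by repeating the construction with $a$ and $b$ switched and invoking uniqueness, whereas you restrict the already-established morphism equations to the faces, which is a bit more economical; just note that, as in the paper (which fixes $\beta_{|t=0,1}=0$ and takes the other sections to be exact solutions), the equations hold at the level of sections only for suitably chosen extensions, so your face argument at $\epsilon=1$ is best phrased at the level of paths (the restriction of $h$ to that face is a morphism with vanishing $\d t$-component and vanishing initial condition), exactly as in your uniqueness step.
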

\begin{proof}
Let us first show uniqueness: assume $h:a\d t+b\d\epsilon+c\d u:TI^3\to A$ is an algebroid morphism, then one can extend $b$ into a time-dependant section $\beta$ of $A$ chosen such that $\beta_{|t=0,1}=0$, and let $\alpha$ be the solution of the evolution equation
$$\frac{d\alpha}{d\epsilon}-\frac{d\beta}{d t}=[\alpha,\beta]_A$$
with initial condition $\alpha_{|\epsilon=0}=0$. We already know that necessarily  $\alpha(\gamma^u(t,\epsilon))=a(\epsilon,t,u)$ (see \cite{CrFe2}).
For the same reason, if we let $\theta$ be the solution of the evolution equation
$$\frac{d\theta}{d\epsilon}-\frac{d\beta}{du}=[\theta,\beta]_A,$$
with initial condition $\theta_{\epsilon=0}=0$, then necessarily $\theta$ satisfies $\theta_{\gamma^u(t,\epsilon)}=c(t,\epsilon,u).$

To get existence, we have to make sure that if we let $c:=\theta_{\gamma^u(t,\epsilon)}$, then $h:adt+bd\epsilon+cdu$ is indeed a Lie algebroid morphism. By construction, one already has the following relations:
\begin{eqnarray*}
        \frac{d\alpha}{d\epsilon}-\frac{d\beta}{dt}&=&[\alpha,\beta]_A\\
        \frac{d\theta}{d\epsilon}-\frac{d\beta}{du}&=&[\theta,\beta]_A
 \end{eqnarray*}
 and, as can be checked in local coordinates, it is enough to show that:
$$\frac{d\theta}{dt}-\frac{d\alpha}{du}=[\theta,\alpha]_A$$
as well. We will need this result for other purposes, so we stated it in a separate lemma (see below).
There only remains to show that ${\theta_{\gamma^\epsilon(t)}}_{|\epsilon=1}=0$. For this, one can make the same reasoning as above switching the roles of $a$ and $b$, then invoke uniqueness.
\end{proof}
\begin{lem}\label{spherelemma}
 Let $\beta$ be a smooth family of sections of an algebroid $A$ with parameters $(\epsilon, t, u)\in I^3$ such that $\beta_{|t=0,1}$ vanishes. Denote $\alpha$ and $\theta$ the unique solutions of the evolution equations:
$$\frac{d\alpha}{d\epsilon}-\frac{d\beta}{d\epsilon}=[\alpha,\beta]_A,$$
$$\frac{d\theta}{d\epsilon}-\frac{d\beta}{du}=[\theta,\beta]_A,$$
with initial conditions $\alpha_{|\epsilon=0}=0$ and $\theta_{|\epsilon=0}=0 $. Then the following assertions hold:
\begin{enumerate}[(i)]
\item $\theta$ is solution of:
$$    \frac{d\theta}{dt}-\frac{d\alpha}{du}=[\theta,\alpha]_A,$$
\item $\theta$ vanishes whenever $t=0,1.$
\end{enumerate}
\end{lem}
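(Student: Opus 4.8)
The plan is to treat both assertions as consequences of the uniqueness theory for the evolution equation (\ref{evolution}). For (i), read the two hypotheses in the form $\frac{d\alpha}{d\epsilon}=\frac{d\beta}{dt}+[\alpha,\beta]_A$ and $\frac{d\theta}{d\epsilon}=\frac{d\beta}{du}+[\theta,\beta]_A$ (the first equation being the one used throughout the body), and introduce the defect
\[ \Xi:=\frac{d\theta}{dt}-\frac{d\alpha}{du}-[\theta,\alpha]_A. \]
I would show that $\Xi$ itself satisfies a homogeneous linear evolution equation in $\epsilon$ with vanishing initial datum, so that uniqueness forces $\Xi\equiv 0$, which is precisely (i).

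To obtain that equation I would differentiate $\Xi$ in $\epsilon$, commute the mixed partials $\frac{d}{d\epsilon}\frac{d}{dt}=\frac{d}{dt}\frac{d}{d\epsilon}$ and $\frac{d}{d\epsilon}\frac{d}{du}=\frac{d}{du}\frac{d}{d\epsilon}$, and substitute the two defining equations. After substitution the pure second-order term $\frac{d^2\beta}{dt\,du}$ cancels against $\frac{d^2\beta}{du\,dt}$, the two occurrences of $[\theta,\frac{d\beta}{dt}]_A$ cancel, and $[\alpha,\frac{d\beta}{du}]_A+[\frac{d\beta}{du},\alpha]_A=0$ by antisymmetry. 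Collecting what survives and writing $\frac{d\theta}{dt}-\frac{d\alpha}{du}=\Xi+[\theta,\alpha]_A$ yields
\[ \frac{d\Xi}{d\epsilon}=[\Xi,\beta]_A+\Bigl([[\theta,\alpha]_A,\beta]_A-[[\theta,\beta]_A,\alpha]_A-[\theta,[\alpha,\beta]_A]_A\Bigr), \]
and the parenthesised remainder is exactly the Jacobiator of $\theta,\alpha,\beta$, hence vanishes by (\ref{Jacobi}). Thus $\frac{d\Xi}{d\epsilon}=[\Xi,\beta]_A$. For the initial condition, since $\alpha_{|\epsilon=0}=0$ and $\theta_{|\epsilon=0}=0$ hold identically in $(t,u)$, the tangential derivatives $\frac{d\theta}{dt}$ and $\frac{d\alpha}{du}$ vanish on the slice $\epsilon=0$, and $[\theta,\alpha]_A=[0,0]_A=0$ there, so $\Xi_{|\epsilon=0}=0$. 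The homogeneous equation $\frac{d\Xi}{d\epsilon}=[\Xi,\beta]_A$ is solved by $\Xi^\epsilon=(\psi^{\beta}_{\epsilon,0})_*\Xi^0$, so $\Xi^0=0$ gives $\Xi\equiv 0$.

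For (ii) I would restrict the defining equation for $\theta$ to the face $t=0$ (and symmetrically $t=1$). The hypothesis $\beta_{|t=0,1}=0$ says that for each fixed $\epsilon,u$ the section $\beta$ vanishes on that face; differentiating in $u$ gives $\bigl(\frac{d\beta}{du}\bigr)_{|t=0}=0$, and since the algebroid bracket is computed pointwise over $M$ (the $t$-derivatives never enter it) one also has $\bigl([\theta,\beta]_A\bigr)_{|t=0}=[\theta_{|t=0},0]_A=0$. Hence the evolution equation reduces to $\frac{d}{d\epsilon}\bigl(\theta_{|t=0}\bigr)=0$, and together with $\theta_{|\epsilon=0}=0$ this forces $\theta_{|t=0}=0$ for all $\epsilon$; the argument at $t=1$ is identical.

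The only genuinely delicate point is the cancellation in (i): one must antisymmetrise every bracket with the correct sign and verify that the surviving triple brackets assemble into precisely the Jacobiator, so that (\ref{Jacobi}) can be invoked. The rest is bookkeeping, plus the observation that $\Xi$ obeys a bona fide homogeneous evolution equation, which lets the existence-and-uniqueness theory already used for (\ref{evolution}) apply verbatim. I expect the Jacobi cancellation to be the crux; everything else is routine.
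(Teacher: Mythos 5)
Your proof is correct and follows essentially the same route as the paper's: both introduce the defect section $\phi=\frac{d\theta}{dt}-\frac{d\alpha}{du}-[\theta,\alpha]_A$, show it satisfies a homogeneous linear evolution equation in $\epsilon$ with vanishing initial condition (you supply in full the Jacobiator cancellation that the paper dismisses as a ``straightforward computation''), and prove (ii) by restricting the defining equation to the faces $t=0,1$ where $\beta$ vanishes. The only discrepancy is a sign --- the paper writes $\frac{d\phi}{d\epsilon}=[\beta,\phi]_A$ whereas your computation gives $[\Xi,\beta]_A$, and your sign is the correct one --- but this is immaterial, since either way uniqueness for the homogeneous linear equation forces the defect to vanish.
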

\begin{proof}
 Define a family of sections $\phi$ as the difference:
$$\phi:=\frac{d\theta}{dt}-\frac{d\alpha}{du}-[\theta,\alpha]_A.$$
Clearly, $\phi_{|\epsilon=0}=0$ since $\alpha_{|\epsilon=0}$ and $\theta_{|\epsilon=0}$ both vanish. On the other hand, a straightforward computation shows that $\phi$ also satisfies:
$$\frac{d\phi}{d\epsilon}=[\beta,\phi]_A.$$
The unique solution of such an equation with initial condition $\phi_{|\epsilon=0}=0$ being $\phi=0$, we have proved $(i)$.

The fact that $\theta_{|t=0,1}=0$ is in fact straightforward: if one sees the equation $$\frac{d\theta}{d\epsilon}-\frac{d\beta}{du}=[\theta,\beta]_A,$$
as a family of equations with parameter $t$, clearly for $t=0,1$, since $\beta$ vanishes, $\theta$ has to vanish as well, which leaves $(ii)$ proved.\end{proof}

This way, we get an alternative definition for homotopies of $A$-spheres that should be more natural (though redundant).

\begin{defn}
Two $A$-spheres $s^0,s^1:TI^2\to A$ are homotopic if there exists a Lie algebroid morphism $s:a\d t+b\d\epsilon+c\d u:TI^3$ with $(a\d t+b\d\epsilon)_{|u=i}=s^i$ for  $i=0,1$, and satisfying 
\begin{enumerate}[i)]
 \item $a_{|\epsilon=0,1}=0$ and $b_{|t=0,1}=0$ (\emph{i.e.,} $a\d t+b\d\epsilon$ is an $A$-sphere for any $u\in I$).
\item $c_{|t,\epsilon=0,1}=0$ (\emph{i.e.}, c vanishes whenever $t\in\{0,1\}$ \emph{or} $\epsilon\in\{0,1\}$).
\end{enumerate}
\end{defn}

\bibliographystyle{amsalpha}

\end{document}